\theoremstyle{plain} 
\newtheorem{thm}{Theorem}[section]
\newtheorem{cor}[thm]{Corollary}
\newtheorem{lem}[thm]{Lemma}
\newtheorem{prop}[thm]{Proposition}
\newtheorem{prob}[thm]{Problem}
\theoremstyle{definition}
\theoremstyle{remark}
\newtheorem{rem}[thm]{Remark}
\newtheorem{ex}[thm]{Example}
\newtheorem*{notation}{Notation}
\theoremstyle{definition} 
\newcommand{\R}{\mathbb{R}}
\tikzstyle{vertex}=[draw,circle,fill=blue!20]
\tikzstyle{small vertex}=[draw,circle,fill=blue!20,inner sep = 0pt,minimum size = .2cm]
\tikzstyle{redvertex} = [draw,vertex, fill=red!40]
\tikzstyle{greenvertex} = [draw,vertex, fill=green!40]
\tikzstyle{labels} = [shape=rectangle, fill = blue!20]
\tikzstyle{edge} = [draw,thick,-]
\tikzstyle{rededge} = [draw,line width=5pt,-,red!40]
\tikzstyle{greenedge} = [draw,line width=5pt,-,green!40]
\tikzset{onslide/.code args={<#1>#2}{%
  \only<#1>{\pgfkeysalso{#2}} 
}}
\begin{document}

\author[1]{Alvaro Carbonero}
\author[2]{Beth Anne Castellano}
\author[3]{Gary Gordon}
\author[4]{Charles Kulick}
\author[5]{Brittany Ohlinger}
\author[6]{Karie Schmitz}

\affil[1]{Univ. of Waterloo, 
Waterloo, ON, Canada  N2L 3G1}
\affil[2]{Dartmouth College, 
Hanover NH 03755}
\affil[3]{Lafayette College, Easton, PA 18042}
\affil[4]{Univ. California at Santa Barbara,
University of California
Santa Barbara, CA 93106}
\affil[5]{Albright College, 1621 N. 13th Street
Reading, PA 19604}
\affil[6]{Syracuse Univ, 
Syracuse University
Syracuse, NY 13244}

\date{}

\title{Permutations of point sets in $\R^d$}
\maketitle
\renewcommand{\thefootnote}{}
\footnotetext{Research supported by NSF grant DMS-1560222.}

\begin{abstract}  Given a set $S$ consisting of $n$ points in $\R^d$ and one or two vantage points, we study the number of orderings of $S$ induced by measuring the distance (for one vantage point) or the average distance (for two vantage points) from the vantage point(s) to the points of $S$ as the vantage points move through $\R^d.$  With one vantage point, a theorem of Good and Tideman [{\em J. Combin. Theory Ser. A}, \textbf{23}: 34--45, 1977] shows the maximum number of orderings is a sum of unsigned Stirling numbers of the first kind. We show that the  minimum value in all dimensions is $2n-2,$ achieved by $n$ equally spaced points on a line. We investigate special configurations that achieve intermediate numbers of orderings in the one-dimensional and two-dimensional cases. We also treat the case when the points are on the sphere $S^2,$ connecting spherical and planar configurations. We briefly consider an application using weights suggested by an application to social choice theory. We conclude with several open problems that we believe deserve further study.
\end{abstract}

\section{Introduction} \label{S:intro} Suppose $n$ candidates are running for office, and the position of each candidate is measured on two independent issues. Each candidate is then assigned an ordered pair of real numbers, with the first number measuring the candidate's position on the first issue, and the second number measuring the position on the second issue. 

A voter $V$ also has positions on the issues, so $V$ also corresponds to an ordered pair. Thus, we now have $n$ points in the plane (corresponding to the $n$ candidates) and one point $V$ (corresponding to the voter). See Fig.~\ref{F:5pts} for an example where $S$ consists of five points in the plane, with a specific vantage point $V.$

\begin{figure}[htb] 
   \centering
\includegraphics[width=2in]{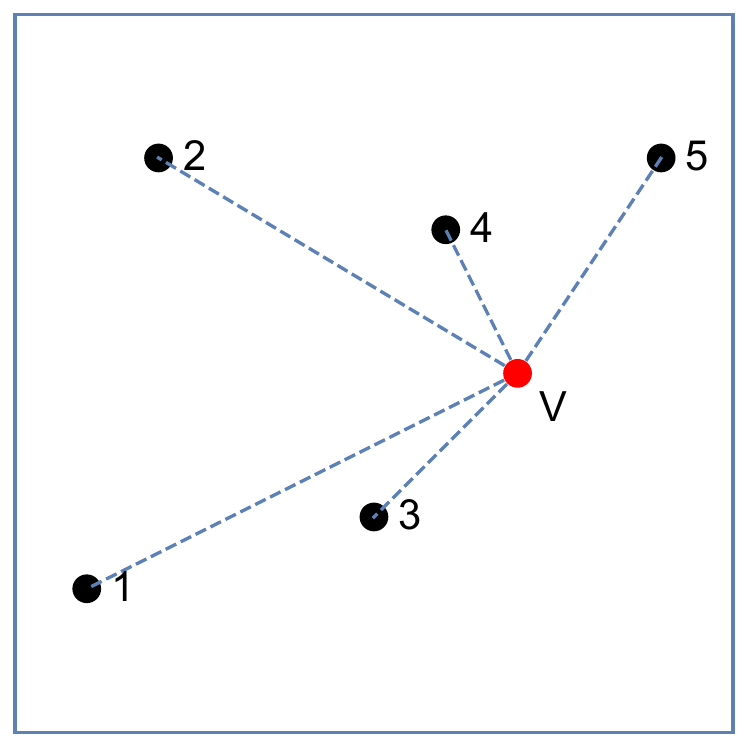} 
 \caption{Five points in the plane, with one vantage point $V.$ Preference ordering: 43521.}
\label{F:5pts}
\end{figure}

Of course, different voters may take different positions on the issues. Each voter ranks the candidates from closest to farthest, using Euclidean distance. This leads to the following problem in discrete geometry:

\begin{prob} For a fixed set $S=\{P_1, P_2, \dots, P_n\}$ of $n$ points in $\mathbb{R}^d$ and a \textit{vantage point} $V,$ generate an ordering of the points of $S$ by measuring the distance from $V$ to each of the points of $S,$ where $P_i<P_j$ in the ordering precisely when $d(V,P_i)<d(V,P_j).$  How many distinct orderings can be produced for different vantage points $V$?
\end{prob}

This problem is the focus of Good and Tideman \cite{gt} and Zaslavsky \cite{z}. In 1977, Good and Tideman determined the maximum possible number of orderings of the $n$ points in $d$ dimensions. In particular, they prove the following theorem.

\begin{thm}\label{T:gt} Suppose $n$ points are situated ``freely'' in $\mathbb{R}^d.$ Then the number of orderings produced as the vantage point moves in $\mathbb{R}^d$ is $$s(n,n)+s(n,n-1)+\cdots + s(n,n-d),$$ where $s(n,k)$ is the (unsigned) Stirling number of the first kind.

\end{thm}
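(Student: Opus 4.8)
The plan is to convert the count of distance-orderings into a count of regions of a hyperplane arrangement, identify that arrangement as a generic slice of the braid arrangement, and read off the answer from the characteristic polynomial, whose coefficients are the unsigned Stirling numbers. First I would note that ordering by $d(V,P_i)$ agrees with ordering by $d(V,P_i)^2$, and that
\[
d(V,P_i)^2-d(V,P_j)^2=-2\,V\cdot(P_i-P_j)+\bigl(|P_i|^2-|P_j|^2\bigr)
\]
is \emph{affine} in $V$ because the common $|V|^2$ cancels. Thus the order of $P_i$ and $P_j$ is decided by which side of the perpendicular-bisector hyperplane $H_{ij}=\{V:d(V,P_i)=d(V,P_j)\}$ contains $V$, and setting $\ell_i(V)=-2\,V\cdot P_i+|P_i|^2$, the induced ordering is exactly the order of the values $\ell_1(V),\dots,\ell_n(V)$. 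This order is constant on each region of the arrangement $\mathcal A=\{H_{ij}:1\le i<j\le n\}$. Two distinct regions are strictly separated by some $H_{ij}$, so they disagree on the relative order of $P_i$ and $P_j$ and yield different orderings; conversely every realized ordering comes from the region containing a realizing $V$. Hence the number of orderings equals the number of regions of $\mathcal A$.

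I would then recognize $\mathcal A$ as a section of the braid arrangement $\mathcal B=\{x_i=x_j\}$ in $\R^n$. The affine map $\phi\colon\R^d\to\R^n$, $\phi(V)=(\ell_1(V),\dots,\ell_n(V))$, satisfies $H_{ij}=\phi^{-1}\{x_i=x_j\}$, and the coordinate-order of $\phi(V)$ is recorded by which region of $\mathcal B$ it occupies. For points situated ``freely,'' $\phi$ is an affine embedding whose image $U$ is a $d$-dimensional affine subspace, and the regions of $\mathcal A$ correspond to the regions of $\mathcal B$ met by $U$. The braid arrangement has intersection lattice the partition lattice $\Pi_n$ and characteristic polynomial
\[
\chi_{\mathcal B}(t)=\prod_{j=1}^{n-1}(t-j)=\sum_{i\ge 0}(-1)^{i}\,s(n,n-i)\,t^{\,n-1-i},
\]
so its coefficients are, up to sign, precisely the unsigned Stirling numbers of the first kind.

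Finally I would apply Zaslavsky's theorem \cite{MR1921558} together with the fact that a generic $d$-dimensional affine slice sees exactly the chambers counted by the top $d+1$ coefficients: the number of regions of $\mathcal A$ is
\[
\sum_{i=0}^{d}\bigl|(-1)^{i}s(n,n-i)\bigr|=s(n,n)+s(n,n-1)+\cdots+s(n,n-d),
\]
which is the asserted value. As checks, for $d=1$ this gives $1+\binom{n}{2}$, matching the $\binom{n}{2}+1$ intervals cut on a line by the pairwise midpoints, and for $d\ge n-1$ it sums to $\sum_k s(n,k)=n!$, recovering all orderings.

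The main obstacle is verifying the genericity implicit in ``situated freely,'' and the subtlety is that it is genericity \emph{relative to} $\mathcal B$, not full general position of the $\binom{n}{2}$ bisectors. Indeed the bisectors are forced to have coincidences — for example $H_{ij},H_{ik},H_{jk}$ always meet at the circumcenter of triangle $P_iP_jP_k$ — and these are exactly the flats prescribed by $\Pi_n$: the locus where $V$ is equidistant from all points in each block of a partition $\pi$ has the codimension $n-(\#\text{blocks})$ demanded by the corresponding flat of $\mathcal B$. So the needed condition is that the intersection poset of $\mathcal A$ equals the rank-$d$ truncation of $\Pi_n$, with no coincidences beyond these forced ones. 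I would establish this by writing the genericity requirements as the non-vanishing of finitely many polynomials in the coordinates of $P_1,\dots,P_n$ and noting each is not identically zero (one explicit good configuration suffices per condition), so the requirements hold on a dense open set of configurations — which is the content of ``freely.''
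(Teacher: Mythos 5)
Your proposal should first be measured against the fact that the paper does not actually prove Theorem~\ref{T:gt}: it quotes the result from Good and Tideman \cite{MR505547}, whose proof is inductive, and notes that Zaslavsky \cite{MR1921558} gave a second proof via hyperplane arrangements. Your argument is a reconstruction of that second, arrangement-theoretic route: orderings correspond to regions of the perpendicular-bisector arrangement; that arrangement is the preimage of the braid arrangement under the affine map $V\mapsto(\ell_1(V),\dots,\ell_n(V))$; and for a slice that is generic relative to the partition lattice $\Pi_n$, Zaslavsky's region-count theorem applied to the truncated characteristic polynomial $\prod_{j=1}^{n-1}(t-j)$ yields $\sum_{i=0}^{d} s(n,n-i)$. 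The reductions here are correct, and your identification of the forced coincidences (e.g., the three bisectors of a triangle meeting at the circumcenter) with the flats of $\Pi_n$ is exactly the right way to phrase what ``freely'' must mean; this is consistent with the paper's own remark that the geometric meaning of ``freely'' is delicate.

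The gap is in your last step, and it is precisely the step that carries the weight of the theorem. You assert that each genericity requirement is the non-vanishing of a polynomial in the coordinates of $P_1,\dots,P_n$ that is not identically zero, with ``one explicit good configuration per condition'' as the justification --- but no such configurations are exhibited, and exhibiting them (or running a perturbation/induction in their place) is the substantive content of the attainability half of the theorem; it is what Good--Tideman's induction and Zaslavsky's analysis of the intersection semilattice actually accomplish. Note also that you cannot shortcut this by appealing to genericity of the slice $U=\phi(\R^d)$ among all affine $d$-subspaces of $\R^n$: the subspaces arising from point configurations form a constrained family, since the constant term $|P_i|^2$ of $\ell_i$ is determined by its linear part $-2P_i$, so a priori this family could lie entirely inside the bad locus. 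Your move to polynomial conditions on the $P_i$ themselves is the correct fix, but it makes the missing witnesses unavoidable rather than optional. Finally, if ``freely'' is read as the paper reads it (a configuration ``produces the maximum possible number of orderings''), one also needs the upper-bound direction --- that no configuration, generic or not, exceeds the truncated-$\Pi_n$ count; this follows from Whitney-number/weak-map arguments in Zaslavsky's framework, but your write-up does not address it.
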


The unsigned Stirling number $s(n,k)$ gives the number of permutations of a set of size $n$ having precisely $k$ cycles. The proof of Theorem~\ref{T:gt}  given in \cite{gt} is inductive. Zaslavsky \cite{z} gives a different proof based on hyperplane arrangements. The connection to hyperplane arrangements arises naturally: Given two points $A$ and $B,$ voters on one side of the perpendicular bisector (a hyperplane in $\mathbb{R}^d$) will prefer $A$ to $B$, and voters on the other side of the hyperplane will have the opposite preference. See Fig.~\ref{F:perp} for an example with $n=4$ in the Euclidean plane. (We ignore voters situated on the hyperplanes; these give rise to ``pseudo-orderings'' in which ties are allowed in the ranking of the candidates.)

\begin{figure}[htb] 
   \centering
\includegraphics[width=2in]{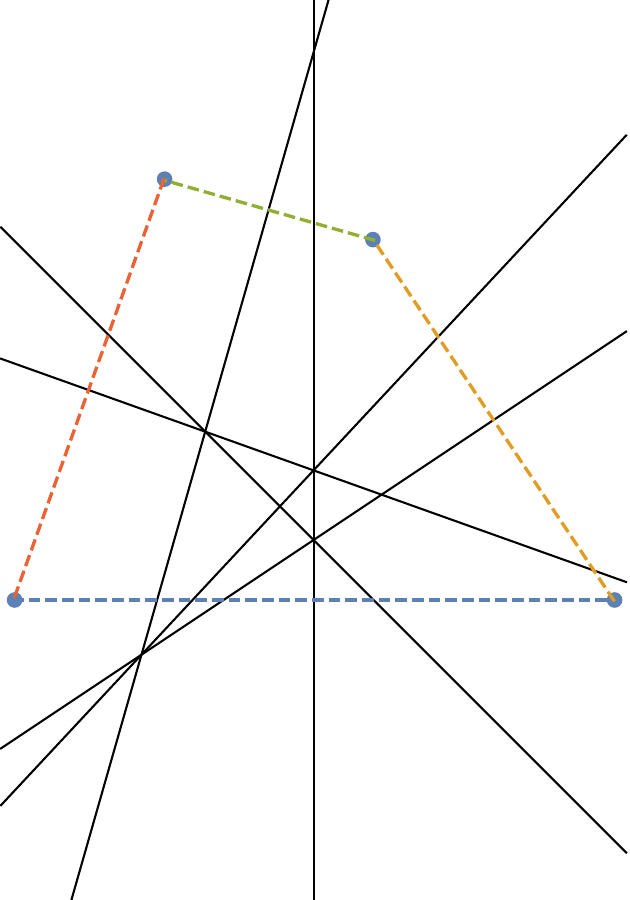} 
 \caption{Perpendicular bisectors partition the plane into  regions, each of which corresponds to a unique ordering of $S.$}
\label{F:perp}
\end{figure}

\begin{rem} Our interpretation of a set of points being ``freely situated'' is that such a configuration produces the maximum possible number of orderings. Zaslavsky points out \cite{z} that it appears to be difficult to say precisely what this condition means geometrically. We also point out that the connection to the unsigned Stirling numbers connects a problem with permutations (counting the number of orderings) with a statistic associated with permutations. But it appears no direct connection is known, i.e., we are unaware of a direct proof of Theorem~\ref{T:gt} that associates permutations to permutations.
\end{rem}

In the plane, the formula for the maximum number of possible orderings is given by a degree 4 polynomial.

\begin{prop}\label{P:1max}
The maximum number of orderings for $n$ points in the plane is $$s(n,n)+s(n,n-1)+s(n,n-2)=\frac{1}{24}\left(3n^4-10n^3+21n^2-14n+24\right).$$
\end{prop}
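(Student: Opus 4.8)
The plan is to observe that the first equality in the statement is immediate and that all the genuine work lies in the second. By Theorem~\ref{T:gt} specialized to $d=2$, the maximum number of orderings for $n$ freely situated points in the plane is exactly $s(n,n)+s(n,n-1)+s(n,n-2)$. So it remains only to evaluate these three Stirling numbers as polynomials in $n$ and add them; the target is the stated quartic.

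First I would compute the three Stirling numbers combinatorially using the cycle interpretation already given in the paper, namely that $s(n,k)$ counts the permutations of a set of size $n$ having precisely $k$ cycles. The two easy cases are $s(n,n)=1$ (only the identity has all $n$ cycles of length one) and $s(n,n-1)=\binom{n}{2}$ (a permutation with $n-1$ cycles must consist of a single transposition together with fixed points, and the transposition is chosen in $\binom{n}{2}$ ways).

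The one case requiring genuine thought is $s(n,n-2)$. A permutation on a set of size $n$ with exactly $n-2$ cycles is obtained from the identity by merging elements so that two cycles are ``lost,'' and there are exactly two cycle types that achieve this: a single $3$-cycle with the remaining $n-3$ elements fixed, or two disjoint transpositions with the remaining $n-4$ elements fixed. Counting each type gives
$$s(n,n-2)=2\binom{n}{3}+3\binom{n}{4},$$
since a chosen triple admits two distinct $3$-cycles, and a chosen $4$-set splits into two unordered pairs in $3$ ways. Rewriting this over a common denominator yields $s(n,n-2)=\tfrac{1}{24}\,n(n-1)(n-2)(3n-1)$.

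Finally I would add the three contributions over the common denominator $24$, obtaining
$$\frac{24+12n(n-1)+n(n-1)(n-2)(3n-1)}{24},$$
and expand the numerator. This is a routine polynomial computation that collapses to $3n^4-10n^3+21n^2-14n+24$, matching the claimed formula. The only place where care is needed is the combinatorial bookkeeping for $s(n,n-2)$ — specifically, not forgetting the factor of $2$ for the two orientations of the $3$-cycle and the factor of $3$ for the pairings in the double-transposition case; everything after that is mechanical algebra rather than a conceptual obstacle.
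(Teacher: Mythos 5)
Your proof is correct, but it takes a different route from the one the paper sketches. You treat the first equality as the content-free step (quoting Theorem~\ref{T:gt} at $d=2$, exactly as the paper itself does when it writes $M(n,d)=\sum_{i=0}^{d}s(n,n-i)$) and put all the work into evaluating the Stirling numbers: $s(n,n)=1$, $s(n,n-1)=\binom{n}{2}$, and $s(n,n-2)=2\binom{n}{3}+3\binom{n}{4}$, which is the same identity the paper uses later in the proof of Prop.~\ref{P:par}; your factored form $\tfrac{1}{24}n(n-1)(n-2)(3n-1)$ and the final expansion to $\tfrac{1}{24}\left(3n^4-10n^3+21n^2-14n+24\right)$ both check out. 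The paper, by contrast, advertises a \emph{direct, non-inductive} proof: apply Euler's polyhedron formula $v-e+f=2$ to the line arrangement of all $\binom{n}{2}$ perpendicular bisectors, counting vertices by degree (crossings of pairs of bisectors, and triple points at circumcenters of triangles), in the style carried out in detail in Lemma~\ref{L:sum}. The trade-off is this: your argument is shorter and purely algebraic, but it leans on Good--Tideman as a black box, whose proof is inductive (or on Zaslavsky's hyperplane-arrangement proof); the paper's geometric count is self-contained for the plane, in effect re-deriving the $d=2$ case of Theorem~\ref{T:gt} and simultaneously establishing the region--ordering correspondence and the counting machinery that is reused for the free-sum lemma. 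Both are valid proofs of the proposition as stated; yours simply relocates the geometric content into the cited theorem.
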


A direct (non-inductive) proof of this proposition follows from Euler's polyhedron formula $v-e+f=2$ and an analysis of the line arrangement formed by the perpendicular bisectors of all pairs of points. In this context, there is an obvious one-to-one correspondence between orderings and regions of the plane determined by all the perpendicular bisectors. The integer sequence generated by the formula of Prop.~\ref{P:1max} appears in the online encyclopedia of integer sequences \cite{s} (OEIS A308305). Details concerning this approach to the derivation of the formula in Prop.~\ref{P:1max}  are left to the interested reader, and similar arguments appear in our proof of Lemma~\ref{L:sum}.

We can compare the maximum  number of regions determined by $n$ points in the plane from Prop.~\ref{P:1max} to the maximum number of regions produced by an arbitrary collection of ${n \choose 2}$ lines in the plane. Since the maximum number of regions determined by $k$ lines in the plane is ${k \choose 0} + {k \choose 1}+{k \choose 2}=\frac{k^2+k+2}{2},$ we know that ${n \choose 2}$ lines determine at most $\frac{1}{8} \left(n^4-2 n^3+3 n^2-2 n+8\right)$ regions. And, while an arbitrary collection of ${n \choose 2}$ lines in the plane produces more regions than the special collection of ${n\choose 2}$ perpendicular bisecting lines produced in Prop.~\ref{P:1max}, the ratio of these two values approaches 1 since the lead terms in these two formulas are identical. We summarize this observation with the following corollary.

\begin{cor}\label{C:ratio}
Let $M(n)$ be the maximum number of regions determined by the perpendicular bisectors of $n$ points in the plane, and let $L(k)$ be the maximum number of regions formed by a collection of $k$ lines in the plane. Then $$\frac{M(n)}{L\left({n \choose 2}\right)} \to 1.$$
\end{cor}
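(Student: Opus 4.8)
The plan is to reduce the corollary to a routine limit of a ratio of two quartic polynomials in $n$, and then simply to verify that their leading coefficients coincide. All of the conceptual content is already present in the observation preceding the statement, namely that the two formulas share the same lead term; the task is only to make this precise.

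First I would record the two quantities as explicit polynomials in $n$. By Proposition~\ref{P:1max}, $$M(n)=\frac{1}{24}\left(3n^4-10n^3+21n^2-14n+24\right),$$ a quartic with leading coefficient $3/24=1/8$. For the denominator I would substitute $k={n \choose 2}=\frac{n^2-n}{2}$ into the formula $L(k)=\frac{k^2+k+2}{2}$ for the maximum number of regions cut out by $k$ lines, obtaining $$L\left({n \choose 2}\right)=\frac{1}{8}\left(n^4-2n^3+3n^2-2n+8\right),$$ which is exactly the expression already displayed in the discussion above. This too is a quartic in $n$ with leading coefficient $1/8$.

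With both expressions in hand, the limit is immediate: dividing numerator and denominator by $n^4$ sends every term except the leading one to $0$, so $$\lim_{n\to\infty}\frac{M(n)}{L\left({n \choose 2}\right)}=\frac{1/8}{1/8}=1.$$ The only step requiring any care is the algebraic expansion of $L\left({n \choose 2}\right)$, and in particular confirming that its leading coefficient equals $1/8$ so that it matches the leading coefficient of $M(n)$. Once that arithmetic is checked there is no genuine obstacle, since the convergence of a ratio of polynomials of equal degree to the ratio of their leading coefficients is standard.
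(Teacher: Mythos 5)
Your proposal is correct and is essentially identical to the paper's own justification: the corollary there is stated as a summary of the immediately preceding discussion, which likewise substitutes $k={n \choose 2}$ into $L(k)=\frac{k^2+k+2}{2}$ to get $\frac{1}{8}\left(n^4-2n^3+3n^2-2n+8\right)$ and observes that this and $M(n)$ are quartics in $n$ with the same leading coefficient $1/8$, so the ratio tends to $1$. No gap; nothing further is needed.
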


We introduce some useful notation.

\begin{notation}
\begin{enumerate}
\item Let $S=\{P_1, P_2, \dots, P_n\}$ be a configuration of $n$ points in $\mathbb{R}^d$ dimensions. For a vantage point $V,$ generate an ordering of the points of $S$ by measuring the distance from $V$ to each of the points of $S,$ as above. Let $a_S(n,d)$ be the number of distinct orderings generated when we move a single vantage point $V$ through $\mathbb{R}^d.$ When there is no ambiguity about the dimension or cardinality, we will simply write $a_S$  for the number of orderings generated.

\item Maximum and minimum: Let $M(n,d)$ and $m(n,d)$ be the maximum and minimum values of $a_S(n,d)$ over all possible $S \subset \mathbb{R}^d,$ with $|S|=n.$  When the dimension of our space is clear, we will simply write $M(n)$ and $m(n)$ for the maximum and minimum values.
\end{enumerate}

\end{notation}

From Theorem~\ref{T:gt}, we know $M(n,d)=\sum_{i=0}^d s(n,n-i),$ and it is obvious that $m(n,d) \leq a_S(n,d) \leq M(n,d)$ for any $S\subset \mathbb{R}^d$ with $|S|=n.$ 
Our interest in this problem considers several variations, all of which are easy to motivate. We discuss the generalizations now, along with an outline to the structure of the rest of this paper.

 Let $S$ be a configuration of $n$ points in $d \geq 1$ dimensions. In Section~\ref{S:gaps}, we focus on two questions:
\begin{itemize}
\item Can we determine an exact formula for the minimum value $m(n,d)$?
\item Various special configurations may produce values between the max and the min. Can we fill in these gaps, i.e., for a given integer $k$ satisfying $m(n,d) < k < M(n,d),$ is there a configuration $S \subset \mathbb{R}^d$ with $a_S(n,d)=k$?
\end{itemize}

We determine the minimum $m(n,d)$ in Theorem 2.1.

\smallskip

\noindent \textbf{Theorem \ref{T:min}.}  \textit{The minimum value of $a_S(n,d)$ is  $m(n,d)=2n-2.$ Further, if $n\neq 4,$ this value is achieved if and only if the points of $S$ are equally spaced on a line segment. For $n=4,$ this value is achieved if and only if the points are on a line with $d(P_1,P_2)=d(P_3,P_4).$}

This result is not surprising, but filling in the gaps between the minimum and maximum is harder in dimensions $d>1.$ We show how to find configurations $S$ in the plane close to the minimum where the difference $a_S(n,2)-m(n,2)$ is small, and other configurations close to the maximum where we control the difference $M(n,2)-a_S(n,2).$ Our main tool is a ``free-sum'' lemma (Lemma~\ref{L:sum}) that allows us to compute the number of regions formed by freely placing two configurations $S$ and $T$ together in the plane. Then the number of regions determined by the points in this free sum is completely determined by the sizes of $S$ and $T$ and the number of regions they determine separately. We apply this lemma to special configurations to fill in some of the gaps. A summary of these results appears in Cor. \ref{C:gaps}:

\smallskip

\noindent \textbf{Corollary \ref{C:gaps}.} \textit{Let $n$ be a positive integer and let $k$ be chosen so that $$k \in \left[m(n),\frac{n^2-n+2}{2} \right] \cup \left[M(n)- \frac{n}{2} ,M(n)\right].$$ Then there is a planar point configuration $S$  with $a_S=k.$}


\smallskip

 In Section~\ref{S:sphere}, we show how to convert our formulas for points in the plane to points on a sphere. While we do not consider specific applications in this paper, we believe problems connecting the embedding of points on various surfaces is an interesting topic worth exploring on its own. We compute the maximum in Theorem \ref{SphereMax} and the minimum in Theorem \ref{T:minsphere}. We conclude the section by computing the number of regions for configurations of points corresponding to the five Platonic solids.

\medskip

 We introduce two generalizations in Section~\ref{S:gen}. 
\begin{itemize}
\item When evaluating various candidates,  a specific voter $V$ may care more about some issues than others. We model this situation by having the voter assign non-negative real numbers (\textit{weights}) to each issue to reflect that issue's relative importance to that voter. In Section~\ref{SS:weight}, we will  reduce the problem of computing an ordering using \textit{weighted} preferences to the unweighted case by transforming the point set $S$ in a natural way.

\item What if there are two voters who wish to create a common list of candidates? We call this  the ``Yard sign problem,'' where two people must decide on a common ordering they can agree on. While this problem makes sense for any collection of points in $d$ dimensions, in Section~\ref{SS:2points} we concentrate on points in the plane. Then, given a set $S$ of $n$ points in the plane and \textit{two} vantage points $V_1$ and $V_2,$ we produce an ordering where $P_i<P_j$ if $d(V_1,P_i)+d(V_2,P_i)<d(V_1,P_j)+d(V_2,P_j).$ This is equivalent to using the \textit{average} distance from the two vantage points $V_1$ and $V_2$ to determine the ordering of the points of $S.$
\end{itemize}

In general, it appears quite difficult to determine exact formulas for the maximum and minimum values in this case. Proposition \ref{P:velo} provides an upper bound when the point set is collinear. The proof uses calculus in a novel way and the Fibonacci sequence appears, but much more work is needed before we have a complete understanding of this situation.

\medskip

There are several unsolved problems that deserve further exploration. We list some of these in Section~\ref{S:prob}.

\section{Finding the minimum}\label{S:min}

Let $S \subset \mathbb{R}^d$ with $|S|=n.$ From Good and Tideman \cite{gt}, we know the maximum number of orderings possible is given by $M(n,d)=\sum_{i=0}^ds(n,n-i),$ where $s(n,k)$ is the $k^{th}$ (unsigned) Stirling number of the first kind. We list the  specific polynomials giving the maximum in small dimensions below.

\begin{eqnarray*}
M(n,1) &=&\frac{1}{2} \left(n^2-n+2\right) \\
M(n,2)&=& \frac{1}{24} \left(3 n^4-10 n^3+21 n^2-14 n+24\right) \\
M(n,3)&=& \frac{1}{48} \left(n^6-7 n^5+23 n^4-37 n^3+48 n^2-28 n+48\right)
\end{eqnarray*}

When $n\geq 2d,$ we note that $M(n,d)$ will be a polynomial of degree $2d.$ This follows from analyzing the individual terms contributing to the Stirling number $s(n,n-d).$ In this case, note that there are $(2d-1)!! {n \choose 2d}$ permutations composed of $n-2d$ fixed points and $d$ transpositions, where $(2d-1)!!=(2d-1)(2d-3)(2d-5)\cdots 3\cdot1.$ It is clear this term will generate the highest power of $n.$ (Note that $s(n,1)=(n-1)!,$ so $M(n,d)$ is not a polynomial in $n,$ in general.)

The situation for the minimum value is  simpler. The formula does not depend on the dimension we are working in, and the point configurations achieving the minimum must be collinear and equally spaced, with the exception of the $n=4$ case, where we can drop the requirement on equal spacing.

\begin{thm}\label{T:min}
The minimum value of $a_S(n,d)$ is  $m(n,d)=2n-2.$ Further, if $n\neq 4,$ this value is achieved if and only if the points of $S$ are equally spaced on a line segment. For $n=4,$ this value is achieved if and only if the points are on a line with $d(P_1,P_2)=d(P_3,P_4).$
\end{thm}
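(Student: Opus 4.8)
The plan is to work directly with the perpendicular-bisector hyperplane arrangement, since by the correspondence described in the introduction, $a_S(n,d)$ equals the number of full-dimensional regions into which the $\binom{n}{2}$ perpendicular bisectors $H_{ij}$ of the points $P_i,P_j$ partition $\mathbb{R}^d$. First I would establish the lower bound $a_S(n,d)\ge 2n-2$ by a counting argument that is essentially one-dimensional. For any configuration $S$, pick a generic line $\ell$ (a direction along which all the $\binom{n}{2}$ pairwise midpoints have distinct projections) and watch how the ordering changes as $V$ travels along $\ell$ from $-\infty$ to $+\infty$. Each crossing of a bisector $H_{ij}$ swaps the relative order of $P_i$ and $P_j$, and I want to count how many distinct orderings this one-parameter family alone must produce. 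A cleaner route is to argue that the closest point and the farthest point each take on at least a certain number of values: as $V$ ranges over $\mathbb{R}^d$, the ``nearest neighbor'' cell decomposition is the Voronoi diagram, and in any dimension each of the $n$ points owns a nonempty Voronoi cell, so the first-place element alone takes all $n$ values; symmetrically the last-place element (the farthest-point Voronoi / furthest-point diagram) takes at least $2$ values, but the crude product bound is too weak, so I instead count adjacencies. The sharp statement I would prove is that distinct orderings that are adjacent across a single bisector are related by an adjacent transposition, and that to realize all $n$ first-place values one needs a connected sequence of at least $n-1$ transposition-steps, with a symmetric contribution of $n-1$ from the far end; reconciling these into exactly $2n-2$ is the crux of the lower bound.

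The cleanest and most robust way to get $2n-2$ is the collinear reduction. I would show that for \emph{any} $S$ and any vantage point $V$, the induced ordering depends only on the orthogonal projection of the bisector crossings, and then argue that the minimum over all $S$ is attained by a collinear configuration, reducing to $d=1$. In one dimension the analysis is explicit: for $n$ points $x_1<\cdots<x_n$ on the line, the bisectors are the $\binom{n}{2}$ midpoints $m_{ij}=(x_i+x_j)/2$, and the number of orderings equals $1$ plus the number of \emph{distinct} values among these midpoints (each distinct midpoint value, crossed left to right, produces a new ordering, while midpoints that coincide are crossed simultaneously and yield only one new ordering). So $a_S(n,1)=1+\#\{\text{distinct }m_{ij}\}$, and minimizing $a_S$ is exactly minimizing the number of distinct pairwise midpoints. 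Thus the whole problem becomes the combinatorial question: how few distinct values can the $\binom{n}{2}$ midpoints $(x_i+x_j)/2$, equivalently the $\binom{n}{2}$ pairwise sums $x_i+x_j$ with $i<j$, take?

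The key combinatorial lemma is therefore that a set of $n$ real numbers has at least $2n-3$ distinct pairwise sums $x_i+x_j$ (allowing $i<j$), with equality iff the $x_i$ form an arithmetic progression. This is a classical ``sumset'' fact: for $x_1<\cdots<x_n$ the chain of sums $x_1+x_2<x_1+x_3<\cdots<x_1+x_n<x_2+x_n<\cdots<x_{n-1}+x_n$ already exhibits $2n-3$ strictly increasing distinct values, and equality in the lower bound forces consecutive gaps to be equal, i.e. an arithmetic progression. Combined with $a_S(n,1)=1+\#\{\text{distinct sums}\}$, this gives $m(n,1)=1+(2n-3)=2n-2$, attained exactly by equally spaced points, which settles both the value and (for $d=1$) the uniqueness. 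I would then finish the higher-dimensional uniqueness by showing any non-collinear $S$ strictly exceeds $2n-2$: a non-collinear configuration spans a plane, and one can exhibit at least one extra region beyond the collinear count by projecting onto two independent directions and finding two midpoint orderings that are incompatible with a single linear sweep.

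The main obstacle I anticipate is the uniqueness and the anomalous $n=4$ case. The sumset equality characterization is the heart of it: for general $n$ the equality case of the $2n-3$ bound is rigid and forces an arithmetic progression, but when $n=4$ the collection of $\binom{4}{2}=6$ sums has enough slack that the extremal configurations form a larger family — precisely, $2\cdot4-3=5$ distinct sums can occur whenever the outer gap equals the inner configuration's symmetry, which unwinds to the condition $d(P_1,P_2)=d(P_3,P_4)$ rather than full equal spacing. I would handle this by carefully enumerating which coincidences among the six sums $x_i+x_j$ can occur: a coincidence $x_i+x_j=x_k+x_\ell$ with $\{i,j\}\ne\{k,\ell\}$ corresponds to equal-length gaps, and for $n=4$ the single available coincidence is $x_1+x_4=x_2+x_3$, which is \emph{equivalent} to $d(P_1,P_2)=x_2-x_1=x_4-x_3=d(P_3,P_4)$, explaining why equal spacing is not required. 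Verifying that this is the \emph{only} degree of freedom at $n=4$, and that for $n\ge5$ no such non-arithmetic coincidence pattern can achieve the minimum, is the delicate bookkeeping step, and is exactly where I expect the argument to require the most care.
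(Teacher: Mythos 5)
Your one-dimensional analysis is sound and is essentially the paper's: the count $a_S(n,1)=1+\#\{\text{distinct midpoints}\}$, the chain $x_1+x_2<x_1+x_3<\cdots<x_1+x_n<x_2+x_n<\cdots<x_{n-1}+x_n$ giving the lower bound $2n-3$ on distinct sums, and the observation that the $n=4$ coincidence $x_1+x_4=x_2+x_3$ is exactly the condition $d(P_1,P_2)=d(P_3,P_4)$. But the proposal has a genuine gap at its load-bearing step: the ``collinear reduction.'' You assert that the minimum over all $S\subset\R^d$ is attained by a collinear configuration, and that any non-collinear $S$ exceeds $2n-2$ regions ``by projecting onto two independent directions and finding two midpoint orderings that are incompatible with a single linear sweep.'' This does not work as stated: the number of regions of the bisector arrangement is controlled by the number of \emph{distinct directions} (equivalently, pairwise non-parallel bisector lines) that the point set determines --- $k$ pairwise non-parallel lines already give $2k$ unbounded regions --- and the entire difficulty is to bound that number from below for non-collinear sets. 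Showing that two orderings cannot come from a single linear sweep only shows the arrangement is not a pencil of parallel lines; it does not produce the roughly $2n$ regions you need. The paper fills this hole with nontrivial discrete geometry: for $d=2$ it invokes Ungar's theorem (any $n$ non-collinear points in the plane determine at least $n-1$ distinct slopes), which yields at least $2n-2$ unbounded regions, and then rules out the case that all bisectors are concurrent (which would make the points concyclic) via the Erd\H{o}s--Steinberg fact that $n$ concyclic points determine at least $n$ slopes, thereby guaranteeing a bounded region and the strict inequality $a_S>2n-2$; for $d\ge 3$ it projects generically into $\R^3$ and applies the Pach--Pinchasi--Sharir theorem that $n$ non-coplanar points determine at least $2n-5$ directions. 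Your sketch contains no substitute for these direction-counting theorems, and without one both the lower bound in $d\ge 2$ and the ``only if'' (collinearity) half of the characterization remain unproven.

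A secondary, smaller gap: for the equality case in dimension one you state that achieving only $2n-3$ distinct sums forces an arithmetic progression when $n\ge 5$, but you explicitly defer the verification as ``delicate bookkeeping.'' The paper actually carries this out with a sliding-window argument: for the five points $a_1,\dots,a_5$ the three sums $a_2+a_3$, $a_2+a_4$, $a_3+a_4$ must each coincide with a sum in the seven-term chain, the only consistent pattern is $a_2+a_3=a_1+a_4$, $a_2+a_4=a_1+a_5$, $a_3+a_4=a_2+a_5$, which forces equal spacing of $a_1,\dots,a_5$; the window then slides to $a_2,\dots,a_6$ and so on. Your outline is compatible with this, but as written it is a plan rather than a proof.
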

\begin{proof} First, we show $m(n,d) \leq 2n-2$ by computing $a_S(n,d)$ when $S$ consists of $n$ equally spaced points on a line. For convenience, assume the points are placed at the first $n$ positive integers along the $x_1$-axis in $\mathbb{R}^d,$ so $P_1=(1,0,\dots,0), P_2 = (2,0,\dots, 0), \dots, P_n=(n,0, \dots,0).$ Then the hyperplane corresponding to the perpendicular bisector of the points $P_i$ and $P_j$ has equation $x_1=\frac{i+j}{2}.$ These $n$ points determine a total of $2n-3$ perpendicular bisecting hyperplanes, namely all hyperplanes with equations $x_1=k/2,$ where $3 \leq k \leq 2n-1.$ These $2n-3$ parallel hyperplanes partition $\mathbb{R}^d$ into $2n-2$ parallel strips. Since we know there is a one-to-one correspondence between orderings and regions, this shows $m(n,d)\leq 2n-2.$

Next, we show that $m(n,d)\geq 2n-2.$  We will use induction on the dimension $d$ to accomplish this. Actually, we will prove a slightly stronger statement, namely that if $S\subset \mathbb{R}^d$ where $d \geq 2,$ then $a_S(n,d)=2n-2$ implies the points of $S$ are collinear. This will then reduce everything to the one-dimensional case.
\begin{itemize}
\item $d=1.$ Let $a_1, a_2, \dots, a_n$ be $n$ real numbers, listed in increasing order. Note that 
$$a_1+a_2<a_1+a_3<\cdots <a_1+a_n<a_2+a_n<a_3+a_n<\cdots < a_{n-1}+a_n.$$
This immediately implies that the $n$ points determine at least $2n-3$ distinct ``perpendicular bisectors'' (which correspond to midpoints in this case). These midpoints partition the line into at least $2n-2$ segments, each of which corresponds to a unique ordering of the points. Thus $m(n,1)\geq 2n-2.$

\item $d=2.$ Suppose $S \subset \mathbb{R}^2$ with $a_S(n,2) \leq 2n-2.$ We will show that the points of $S$ must be collinear by contradiction, so we suppose the points are not collinear. Then, by Ungar's Theorem \cite{u} on slopes, the $n$ points in $S$ determine at least $n-1$ distinct slopes. Since the slope of a segment uniquely determines the slope of its perpendicular bisector, we conclude that there are at least $n-1$ slopes among the collection of perpendicular bisectors of pairs of points of $S.$ 

Since $k$ lines with distinct slopes give rise to $2k$ unbounded regions of the plane, we conclude that the perpendicular bisectors partition the plane into at least $2n-2$ unbounded regions. But there must be at least one bounded region, too. To see this, note that if not, then the $n-1$ perpendicular bisectors would all intersect at a common point $C.$ It follows immediately that the points of $S$ lie on a circle. But $n$ points on a circle determine at least $n$ slopes (this is due to Erd\H{o}s and Steinberg --- see \cite{dbe,es}), giving rise to at least $n$ perpendicular bisectors. In this case, these  bisectors partition $\mathbb{R}^2$ into at least $2n$ unbounded regions, so $a_S(n,2) \geq 2n,$ contradicting our assumption that  $a_S(n,2) \leq 2n-2.$

We now know there is at least one bounded region in the partition determined by the collection of perpendicular bisectors, giving a total of at least $2n-1$ regions, again contradicting the assumption that  $a_S(n,2) \leq 2n-2.$

We conclude that if the points of $S$ are not collinear, then $a_S(n,2)>2n-2.$

\item $d>2.$ Let $S$ be a collection of $n$ points in $\mathbb{R}^d.$ If the points lie in some hyperplane, then we have $a_S(n,d) > 2n-2$ by the induction hypothesis. So we now assume the points of $S$ do not lie in any $k$-dimensional hyperplane for $k<d.$ 

Now project the points of $S$ onto a $3$-dimensional hyperplane $H'$ in $\mathbb{R}^d$ such that:
\begin{enumerate}
\item [a.] no two points in $S$ are projected onto the same point in $H'$, and
\item [b.] the projected points do not lie on a 2-dimensional plane.
\end{enumerate}
This is always possible. Let $P_i'$ denote the projection of $P_i$ onto $H'.$ Then, by Theorem 1.1 of \cite{pps}, the points $P_1',P_2', \dots, P_n'$ determine at least $2n-5$ different directions. This implies that the points $P_1, P_2, \dots, P_n$ in $\mathbb{R}^d$ also determine at least $2n-5$ distinct directions because projection is a linear transformation, and so it preserves parallelism. (If the directions for two vectors $P_i'-P_j'$ and $P_k'-P_l'$ are different in $H'$, then the vectors $P_i-P_j$ and $P_k-P_l$ could not be parallel.)

Now each direction vector $P_i-P_j$ in $\mathbb{R}^d$ is the normal vector for the perpendicular bisecting hyperplane determined by $P_i$ and $P_j.$ Hence, we must have at least $2n-5$ distinct, pairwise non-parallel, perpendicular bisecting hyperplanes determined by the points of $S$ in $\mathbb{R}^d.$ Now projecting from $\mathbb{R}^3$ to $\mathbb{R}^2$ gives us at least $2(2n-5)$ unbounded regions of $\mathbb{R}^2,$ each corresponding to a unique ordering of the points of $S.$ Hence, $a_S(n,d)>2n-2.$

\end{itemize}
Now assume $n>4.$ We must show that if $S$ minimizes the number of regions, then $S$ consists of $n$ equally spaced points on a line. By the proof of the first part of this theorem, we know that the points of $S$ must be collinear. As above, let $a_1, a_2, \dots, a_n$ be $n$ real numbers, listed in increasing order, which represent the location of the $n$ points of $S$ in $\mathbb{R}.$ 

Now consider the first five points of $S.$ These points determine at most ten midpoints. Since $a_S(n,1)=2n-2,$ the first five points must generate only seven distinct midpoints. But we know (as in the proof above) that the seven sums $$a_1+a_2<a_1+a_3<a_1+a_4<a_1+a_5<a_2+a_5<a_3+a_5<a_4+a_5$$ are all distinct. This tells us each of the remaining sums $a_2+a_3, a_2+a_4,$ and $a_3+a_4$ must be equal to one of the other sums already listed. It is easy to see that the only possibilities are 
\begin{eqnarray}
a_2+a_3 &=& a_1+a_4, \\
a_2+a_4 &=& a_1+a_5, \\
a_3+a_4&=& a_2+a_5.
\end{eqnarray}
First, rewrite equation (1) as $a_4-a_3=a_2-a_1.$ Now subtracting equation (1) from equation (2) gives $a_5-a_4=a_4-a_3,$ and subtracting equation (2) from equation (3) gives $a_3-a_2=a_2-a_1.$  Putting the pieces together gives  $$a_2-a_1=a_3-a_2=a_4-a_3=a_5-a_4,$$ i.e., the points are equally spaced.

Now repeat this argument for the five points $a_2, a_3, a_4, a_5, a_6,$ then $a_3,a_4,a_5,a_6,a_7,$ continuing this process until we exhaust $S.$ We conclude that if $n>4$ and $a_S(n,d)=2n-2,$ then the points of $S$ are collinear and equally spaced.

Finally, when $n=4,$ it is straightforward to check that $a_2-a_1=a_4-a_3,$ but $a_3-a_2$ need not be equal to this common value. For instance, the points $a_1=1, a_2=2, a_3=4, a_4=5$ determine five distinct midpoints, and so achieve the minimum $m(4,1)=6$ regions.

\end{proof}
We conclude this section with some brief comments.
\begin{enumerate}
\item [A.] The appearance of Ungar's Theorem \cite{u} and the direction theorem of Pach, Pinchasi, and  Sharir \cite{pps} provides a direct connection between our problem and two classic problems from discrete geometry. In fact, Ungar's remarkable proof is featured in \cite{az} as a model for a ``book proof'' in the style of Erd\H{o}s. The introduction to \cite{pps} includes more historical information about these problems, and \cite{j} is an excellent resource for the slope problem in the plane.

\item[B.] We can avoid using Ungar's theorem in the $d=2$ case in our proof. To see this, note that $n$ non-collinear points in the plane determine at least $n$ lines, from Erd\H{o}s and Steinberg \cite{es}. These $n$ lines must give rise to $2n$ unbounded regions, unless the lines are all parallel to each other. But this only happens if the points are collinear. 

\item[C.] When $d>2,$ we can avoid using the direction theorem of  \cite{pps} as follows. If $S$ is not collinear, we can project $S$ to $R^2$ and get a non-collinear set of $n$ points in the plane. By Ungar's theorem we can find $n-1$ pairs of points that determine distinct directions. The corresponding perpendicular
bisectors in $R^d$ for the same set of $n-1$ pairs of points determine $n-1$ hyperplanes, no two of which are parallel. Then these hyperplanes partition $R^d$ into
at least $2n-2$ regions.

\item [D.] If we insist that our point set $S \subset \mathbb{R}^d$ is not contained in any lower dimensional hyperplanes, then the question about the minimum number of orderings is open. This  appears as Problem~\ref{Pr:min}  in Section~\ref{S:prob}, where we list several open problems.
\end{enumerate}

\section{Filling in the gaps between the maximum and the minimum}\label{S:gaps} 
In this section, we consider the problem of finding ``intermediate'' configurations that produce numbers of orderings strictly between the maximum and minimum values. This is completely straightforward in dimension 1, but is already quite difficult when $d=2.$ 

\subsection{Points on a line}\label{SS:line} We begin with points on a line. We will show that, for all $k$ with $m(n,1) < k <M(n,1),$ there is a point configuration $S_k$ satisfying $a_{S_k}(n,1)=k.$ 

\begin{thm}\label{T:gap1}
Let $n\geq 1$ be given, and let $k$ satisfy $2n-2 \leq k \leq \frac{n^2-n+2}{2}.$ Then there is a configuration of points $S_k$ with $a_{S_k}(n,1)=k.$
\end{thm}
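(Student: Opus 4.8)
The plan is to translate the statement into a purely additive question about distinct pairwise sums and then build the required configurations inductively. Since all points lie on a line, record $S$ as reals $a_1<a_2<\cdots<a_n$; as in the proof of Theorem~\ref{T:min}, the perpendicular bisectors are exactly the midpoints $\tfrac{a_i+a_j}{2}$, and these cut the line into (number of distinct midpoints)$+1$ regions. Halving is a bijection, so the number of distinct midpoints equals the number of distinct values among the pairwise sums $a_i+a_j$ ($i<j$). Hence $a_S(n,1)=t+1$, where $t$ is the number of distinct pairwise sums of $S$. Because an arithmetic progression realizes the minimum $t=2n-3$ and a generic set realizes the maximum $t=\binom{n}{2}$, proving the theorem amounts to showing that every integer $t$ with $2n-3\le t\le \binom{n}{2}$ occurs as the number of distinct pairwise sums of some $n$ reals; then $k=t+1$ sweeps $[2n-2,\tfrac{n^2-n+2}{2}]$. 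The degenerate cases $n\le 3$, where the range is a single value, are immediate.

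I would prove this by induction on $n$, using two \emph{increment} moves. First, appending a point $a_n$ that is far away and generic makes all $n-1$ new sums $a_n+a_i$ exceed every earlier sum and be pairwise distinct, so it raises $t$ by exactly $n-1$. Second, starting from the unit progression $\{1,2,\dots,n-1\}$ --- whose distinct sums are precisely the integers in $[3,2n-3]$, so $t=2n-5$ --- and appending an integer $x\in\{n,n+1,\dots,2n-3\}$, a new sum $x+i$ coincides with an old one exactly when $x+i\le 2n-3$. A direct count shows this creates exactly $c:=(n-1)-\max(0,2n-3-x)$ genuinely new sums, and as $x$ runs from $n$ to $2n-3$ the value $c$ runs through every integer from $2$ to $n-1$. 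Thus this explicit family realizes every total $t=(2n-5)+c$ in the low range $[2n-3,\,3n-6]$.

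For the high range $[3n-6,\binom{n}{2}]$ I would invoke the inductive hypothesis: choose $n-1$ points realizing any prescribed $t'\in[2(n-1)-3,\binom{n-1}{2}]=[2n-5,\binom{n-1}{2}]$, then append a far generic point to add exactly $n-1$ distinct sums, producing $t'+(n-1)$, which ranges over $[3n-6,\binom{n}{2}]$. Since the low and high ranges meet at $3n-6$ and $\binom{n}{2}\ge 3n-6$ for all $n$ (equivalently $(n-3)(n-4)\ge 0$), their union is the full interval $[2n-3,\binom{n}{2}]$, closing the induction.

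The crux is the control in the low range: a single generic point always contributes the full $n-1$ new sums, so reaching totals below $3n-6$ requires forcing several of the new sums to collide with existing ones, and with one real parameter $x$ this is impossible for an arbitrary base set. This is exactly why I anchor the low range to the arithmetic progression, whose pairwise sums fill a contiguous block of integers; there, sliding the appended integer point changes the number of collisions by exactly one at each step, so every intermediate count is hit. I expect the remaining care to be bookkeeping --- verifying the endpoints, the overlap at $3n-6$, and the small cases $n\le 3$ --- all of which are routine.
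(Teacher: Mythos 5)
Your proof is correct and follows essentially the same route as the paper: both reformulate the problem as counting distinct pairwise sums, use the arithmetic progression $\{1,\dots,n-1\}$ plus one sliding integer point to realize every value in the low range $[2n-2,\,3n-5]$ in unit steps, and use far-away points (the paper uses powers of $2$) that each contribute the maximal number of new sums to cover the high range. The only difference is organizational: the paper unrolls the construction as explicit ``rounds,'' verifying the first two in detail and asserting the pattern continues, whereas you package the rounds as an induction on $n$ --- which, if anything, makes the paper's ``we continue in this manner'' step rigorous.
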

\begin{proof}
Our proof is constructive: we show how to create such a configuration by starting with a configuration achieving the minimum value, then modifying that configuration to increase the number of regions determined by the midpoints in a predictable way. As before, we represent our linear point configuration by specifying $n$ real numbers in increasing order. We let $S_0=\{1,2,\dots, n\}$ be our initial configuration, and remark that $S_0$ achieves the minimum value of $2n-2.$ The proof proceeds by rounds.

\smallskip

\textbf{Round 1}.  In the first round, we keep the first $n-1$ points fixed and we move the last point successively to the values $n+1,$ then $n+2,$ and so on, continuing until we reach $2n-3.$ We call these configurations $S^{(1)}_1, S^{(1)}_2, \dots, S^{(1)}_{n-3},$ so $S^{(1)}_t=\{1,2,\dots, n-1,n+t\},$ with $1 \leq t \leq n-3.$ (The superscript indicates the round.) We now show that each new configuration will have exactly one more midpoint than its immediate predecessor. 

To see this, we note that there are $2n-5$ midpoints generated by the first $n-1$ points of $S^{(1)}_t$ occurring at the values $\frac{i}{2},$ where $3 \leq i \leq 2n-3,$  as well as an additional $t+2$ midpoints formed from the pairs $r$ and $n+t,$ where $n-(t+2) \leq r \leq n-1.$  This produces a total of $2n+t-3$ midpoints, so the number of regions generated (which are intervals here) is $2n+t-2.$

We conclude that fixing the first $n-1$ points and moving the right-most point through the values $n, n+1, \dots, 2n-3$  produces configurations partitioning $\mathbb{R}$ into exactly $k$ intervals for $2n-2\leq k \leq 3n-5.$  (The reader can check that values of $t$ larger than $n-3$ do not introduce any additional midpoints.)

\smallskip
\textbf{Round 2}. In the second round,  for $1 \leq t \leq n-4,$ we fix the first $n-2$ points, we move the penultimate point to $n+t-1,$ and we move the last point to $2^n.$ Thus $S^{(2)}_t=\{1,2,\dots,n-2,n+t-1,2^n\}.$ We now count the number of distinct midpoints:
\begin{enumerate}
\item If $1 \leq i,j \leq n-2,$ then the midpoints of $i$ and $j$ produce a total of $2n-7$ midpoints corresponding to the values $\frac{i}{2},$ where $3 \leq i \leq 2n-5.$
\item If $n-(t+3) \leq i \leq n-2,$ then pairing the points $i$ and $n+t-1$ produces another $t+2$ new midpoints at the values $\frac{r}{2},$ where $2n-4 \leq r \leq 2n+t-3.$
\item If $1 \leq i \leq n-2$ or $i=n+t-1,$ then pairing $i$ and $2^n$ (the largest point in $S^{(2)}_{t}$) yields another $n-1$ new midpoints.
\end{enumerate}
These three cases give a total of $3n+t-6$ midpoints, generating $3n+t-5$ intervals in our partition. We conclude that fixing the first $n-2$ points and moving the two right-most points as above  produces configurations partitioning $\mathbb{R}$ into exactly $k$ intervals for $3n-4\leq k \leq 4n-9.$

We continue in this manner, adding new rounds to fill in all the gaps. This will give us a total of $n-3$ rounds that produce $1+2+\cdots+ (n-3)=M(n,1)-m(n,1)$ distinct numbers of orderings, filling in all the gaps between the minimum and the maximum. We summarize the procedure in Table~\ref{Ta:rounds}.

\begin{table}
\caption{Intermediate configurations that partition the reals into exactly $k$ disjoint intervals, for $2n-1 \leq k \leq \frac{n^2-n+2}{2}.$ We set $b_m=\frac{m^2+3m-2}{2}$ and $c_m=\frac{m^2+5m+4}{2}$ for convenience.}
\begin{center}
\begin{tabular}{|c|c|c|c|} \hline
Round & Configuration & Range for $t$ & Range for $k$ \\ \hline
1&$\{1,2,\dots, n-1,n+t\}$ & $[1,n-3]$ & $[2n-1,3n-5]$ \\ \hline
 2& $\{1,2,\dots,n-2,(n-1)+t,2^n\}$ & $[1, n-4]$ &  $[3n-4,4n-9]$ \\ \hline
3& $\{1,2,\dots, n-3, (n-2)+t, 2^{n-1},2^n\}$ & $[1, n-5]$ &  $[4n-8,5n-14]$ \\ \hline
 \vdots & \vdots & \vdots & \vdots \\ \hline
 $m$ & $\{1,2,\dots, n-m,(n-m+1)+t,$ & $[1, n-(m+2)]$ & $[(m+1)n-b_m,(m+2)n-c_m]$ \\
 & $2^{n-m+2},\dots ,2^{n-1},2^n\}$ & & \\ \hline
  \vdots & \vdots & \vdots & \vdots \\ \hline
  $n-3$ & $\{1,2,3, 4+t,2^4, \dots, 2^n\}$ & $t=1$ & $k=\frac{n^2-n+2}{2}$ \\ \hline 
\end{tabular}
\end{center}
\label{Ta:rounds}
\end{table}%

\end{proof}

We remark that Theorem~\ref{T:gap1} has an attractive reformulation as a sum-set problem.

\begin{quote}
\textbf{Sum-set Problem}. Let $n$ be given and let $k$ satisfy $2n-3 \leq k \leq \frac{n^2-n}{2}.$ Then there is a collection of integers $a_1< a_2< \dots< a_n$ such that the number of distinct  sums $a_i+a_j$ (where $i\neq j$) is exactly $k.$ 
\end{quote}

We also point out that, while our initial configuration is uniquely determined (up to the position of the first point and scaling), there are many choices for our final configuration achieving the maximum number of midpoints. We used powers of 2 in our proof, but many other procedures will work. 

\subsection{Points in the plane} \label{SS:plane} It is much more difficult to produce configurations with a specified number of orderings between the minimum and maximum in the plane. In fact, unlike the situation described in Theorem~\ref{T:gap1}, not every intermediate value is achievable. For example, when $n=3,$ we find $m(3,2)=4$ and $M(3,2)=6,$ but it is easy to check that there is no 3-point configuration whose perpendicular bisectors determine exactly 5 regions. When $n=4,$ we know $m(4,2)=6$ and $M(4,2)=18,$ but it turns out that  there are no four-point configurations with $9, 11, 13, 14,$ or $15$ regions formed by the perpendicular bisectors. (We can prove that the missing values cannot be achieved by any configuration of four points  by examining a few classes of 4-point configurations. We omit the details and note that similar arguments for $n>4$ points rapidly become unwieldy.)  
%
%

%

When $5\leq n \leq 8,$ a computer search was used to produce configurations with intermediate values. But values not appearing have not been explicitly ruled out by any mathematical arguments. Thus, the percentages given in Table~\ref{Ta:gaps} are lower bounds for the actual percentages of achievable values.

\begin{table}[htp]
\caption{The minimum,  maximum, and  percentage of achievable orderings produced by a computer search for $3\leq n\leq 8.$}
\begin{center}
\begin{tabular}{|c|c|c|c|c|c|c|} \hline
$n$ & 3 & 4 & 5 & 6 & 7 & 8  \\ \hline 
Min & 4 & 6 & 8 & 10 & 12 & 14  \\ \hline
Max &  6 & 18 & 46 & 101 & 197 & 351  \\ \hline 
Percent & 66.7\% & 61.53 & 61.53\% & 46.74\% & 52.15\% & 58.88\%  \\ \hline
\end{tabular}
\end{center}
\label{Ta:gaps}
\end{table}%

Although we cannot determine exactly what values between the minimum and maximum are achievable by specific configurations for arbitrary $n,$ we can construct configurations that achieve values near the minimum and also near the maximum. These constructions are highlighted in this section. We write $a_S(n), m(n),$ and $M(n)$   instead of $a_S(n,2), m(n,2),$ and $M(n,2)$ (respectively) throughout the remainder of this section since all of our configurations are planar.  When the number of points is implicitly given, we will abbreviate this further, simply writing $a_S$ instead of $a_S(n).$

The strategy for creating configurations that generate intermediate values for $a_S(n)$ is straightforward. 
We will begin with a configuration of $n$ points that achieves the maximum number of orderings, then introduce generalized trapezoidal configurations in Lemma~\ref{L:trap} that reduce the number of regions in a predictable way. The key idea is embedded in Lemma~\ref{L:sum}, which computes the number of regions in the ``free-sum'' of two configurations solely in terms of the sizes of the configurations and the number of regions each piece determines separately. 

To accomplish this, we first construct a graph as follows. Let $S$ be a finite configuration of points in the plane, and let $\mathcal{L}(S)$ be the collection of perpendicular bisectors determined by pairs of points from $S.$ Then draw a circle large enough so that all the points of intersection formed by the lines of $\mathcal{L}(S)$ are inside the circle. We form a graph $G(S)$ whose vertices are the intersection points of the lines of $\mathcal{L}(S)$ with each other and also with the bounding circle. The edges of $G(S)$ will be the (finite length) line segments determined by the intersecting lines, along with the segments formed along the bounding circle. (It is not necessary to introduce the bounding circle, but it simplifies calculations.) See Fig.~\ref{F:euler} for an example. 

\begin{figure}[h] 
   \centering
\includegraphics[width=3in]{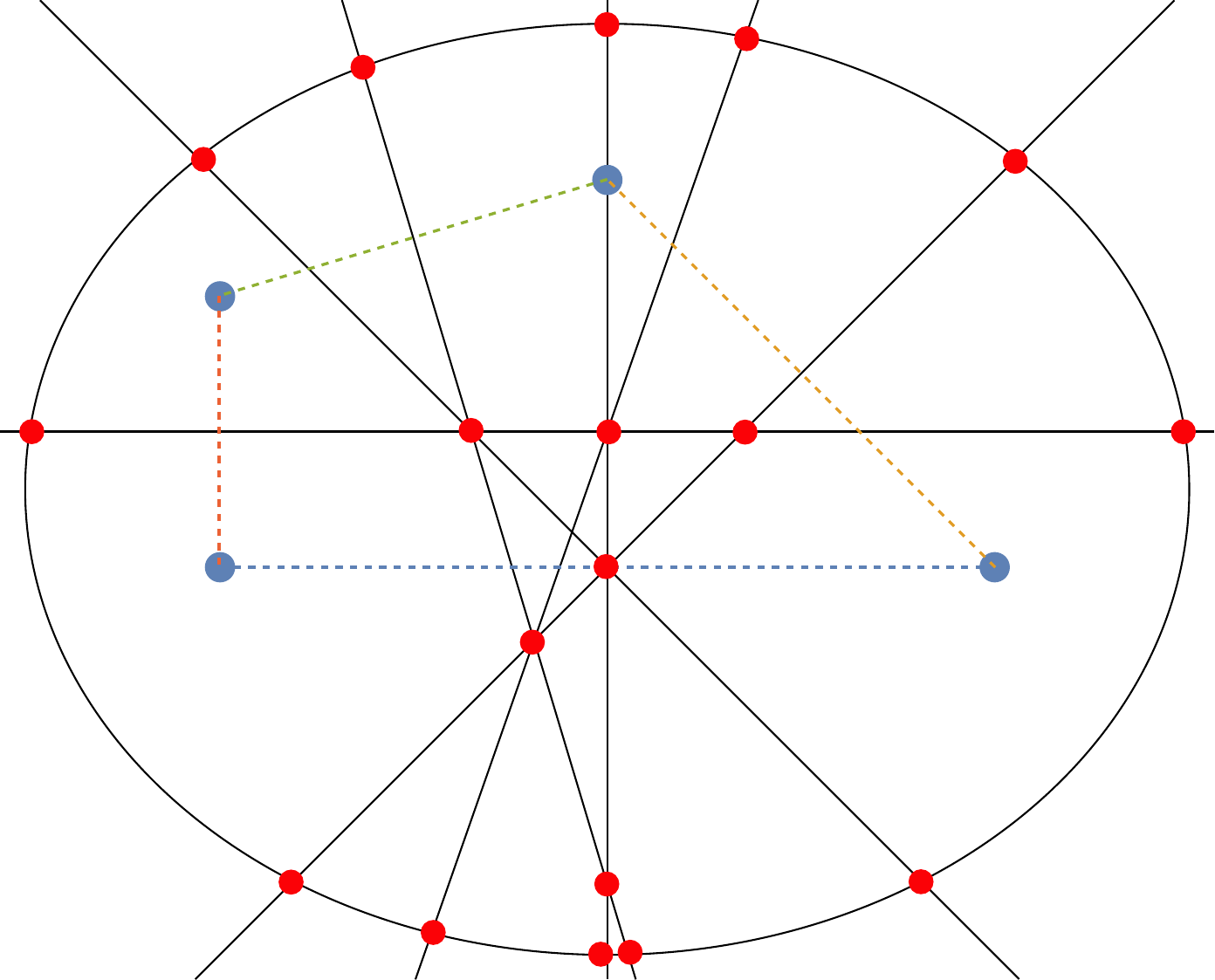}
 \caption{The graph associated with a configuration of  four (blue) points in the plane. The vertices of the graph are shown in red.}
\label{F:euler}
\end{figure}

We can now state and prove a ``free-sum'' lemma. Suppose $S$ and $T$ are disjoint point sets placed freely in the plane, so no perpendicular bisector determined by the points of $S$ is parallel to a perpendicular bisector determined by the points of $T.$  Then we can compute the number of regions for the point set  $S\cup T$ in terms of the original region counts for $S$ and $T$ and the sizes of $S$ and $T.$ Our tool is Euler's famous polyhedral formula $v-e+f=1$ applied to planar graphs (where we ignore the unbounded region). 

\begin{lem}\label{L:sum}
Suppose $S$ and $T$ are disjoint point sets in the plane with $|S|=s$ and $|T|=t.$ Assume that no perpendicular bisector determined by the points of $S$ is parallel to a perpendicular bisector determined by the points of $T.$ Then 
$$a_{S\cup T}=a_S+a_T+\frac{1}{4} \left(3 s^2 t^2+2 s^3 t-5 s^2 t+2 s t^3-5 s t^2+7 s t-4\right).$$
\end{lem}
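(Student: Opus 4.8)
The plan is to apply Euler's formula $v - e + r = 1$ to the graph $G(S \cup T)$ and to express $v$, $e$, and $r$ in terms of the corresponding data for $G(S)$ and $G(T)$. The key structural observation is that the perpendicular bisectors determined by $S \cup T$ split into three families: the $\binom{s}{2}$ bisectors coming from pairs within $S$, the $\binom{t}{2}$ bisectors coming from pairs within $T$, and the $st$ ``cross'' bisectors coming from one point of $S$ and one point of $T$. The free-placement hypothesis guarantees that no two distinct bisectors are parallel and (generically) that no three bisectors are concurrent, so every pairwise intersection is a genuine crossing vertex of degree $4$, and I can count vertices and edges cleanly by counting how many crossings each pair of lines contributes.

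First I would set up the bookkeeping for a single configuration. For a configuration $S$ with $\ell = \binom{s}{2}$ bisectors in general position inside the bounding circle, the number of interior crossing points is $\binom{\ell}{2}$, there are $2\ell$ points where the lines meet the circle, and the bounding circle contributes $2\ell$ arc-vertices merging with those, so $v(S)$, $e(S)$, and $r(S) = a_S$ are all determined. Rather than recompute these from scratch, I would write $a_S$ and $a_T$ as the given region counts and track only the \emph{change} produced by adding the second configuration — this is what makes the final formula depend only on $s$, $t$, $a_S$, and $a_T$. The new lines introduced when forming $S \cup T$ are the $st$ cross bisectors together with the relabeling of the two original circles into one; the heart of the computation is counting the new vertices and edges these $st$ lines create, both among themselves and against the pre-existing $\binom{s}{2} + \binom{t}{2}$ lines.

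Concretely, I would let $L = \binom{s}{2} + \binom{t}{2} + st = \binom{s+t}{2}$ be the total number of bisectors and compute $\Delta v$, $\Delta e$, $\Delta r$ as the difference between the Euler data of $G(S\cup T)$ and the (suitably combined) Euler data of the two separate graphs. The number of genuinely new interior crossings is $\binom{L}{2} - \binom{\binom{s}{2}}{2} - \binom{\binom{t}{2}}{2}$, since every pair of non-parallel lines crosses exactly once and the only crossings already accounted for are those among $S$'s bisectors alone and among $T$'s bisectors alone. Each new line also contributes new boundary vertices and partitions existing edges; applying $r = 1 - v + e$ and simplifying should collapse everything into the stated quartic-in-$(s,t)$ correction term $\frac{1}{4}(3s^2t^2 + 2s^3t - 5s^2t + 2st^3 - 5st^2 + 7st - 4)$.

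I expect the main obstacle to be the careful accounting of boundary and concurrency effects so that the final expression is genuinely a clean polynomial in $s$ and $t$ plus $a_S + a_T$. Two subtleties deserve attention: first, justifying that the free-placement assumption really does force all $\binom{L}{2}$ crossings to be distinct simple crossings (no accidental concurrences introduced by the cross bisectors), so that the incidence counts are exact rather than upper bounds; and second, handling the bounding circle correctly, since combining two configurations means the crossing pattern on the single shared circle must be reconciled with the two separate circles used to define $a_S$ and $a_T$. Once those are pinned down, the remaining work is the routine algebraic simplification that I would not grind through here, and the telescoping of the $\binom{\cdot}{2}$ terms is what produces the $s^2t^2$, $s^3t$, and $st^3$ leading contributions in the claimed formula.
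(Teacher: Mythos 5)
Your overall strategy---Euler's formula applied to the arrangement graph, tracking only the change produced by joining the two configurations---is exactly the paper's, but your execution rests on a false premise that the algebra cannot recover from. You assert that free placement guarantees no three bisectors are concurrent, so that every pairwise intersection is a simple degree-4 crossing and the new interior vertices number $\binom{L}{2}-\binom{\binom{s}{2}}{2}-\binom{\binom{t}{2}}{2}.$ But concurrences in a perpendicular-bisector arrangement are not accidental degeneracies that genericity can remove: for \emph{every} triple of points, the three bisectors of its pairs meet at the circumcenter of that triple. Hence every ``new'' triangle (two points from $S$ and one from $T,$ or vice versa) forces a vertex of degree 6, and the free-placement hypothesis only rules out parallels and coincidences \emph{beyond} these forced ones. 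Your count treats each such circumcenter as three separate simple crossings, overcounting vertices by $2$ and edges by $3$ per new triangle, hence overcounting regions by $1$ per new triangle---a total error of $s\binom{t}{2}+t\binom{s}{2},$ so your simplification would not collapse to the stated polynomial. The justification you flag as a ``subtlety deserving attention'' is in fact impossible to supply.

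A concrete check: take $s=t=2$ with all four points free. The lemma gives $a_{S\cup T}=2+2+14=18,$ which is the paper's $M(4,2),$ whereas your model---$\binom{6}{2}=15$ simple crossings of the $L=6$ bisectors---yields $1+6+15=22$ regions; the deficit of $4=\binom{4}{3}$ is exactly one region lost per circumcenter. The paper's proof avoids this by stratifying the new vertices by degree: $d_3=2st$ boundary vertices where the cross bisectors meet the bounding circle, $d_4$ counting three degree-4 vertices for each new 4-point subset (one per perfect matching of the four points into two disjoint pairs---only bisectors of \emph{disjoint} pairs cross simply), and $d_6$ counting one degree-6 vertex for each new 3-point subset, then feeding $d_3,d_4,d_6$ into Euler's relation. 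If you replace your generic crossing count with this degree-stratified count, the rest of your outline (the bookkeeping relative to $a_S$ and $a_T,$ and the single bounding circle) goes through as written.
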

\begin{proof}
We count the number of vertices of all degrees in the graph $G(S\cup T)$ associated to the  configuration $S\cup T.$ We will then use Euler's relation $f=e-v+1$ to compute the number of regions in $S\cup T.$ We write $v(S)$ and $e(S)$ for the number of vertices and edges in the  graph $G(S),$ and similarly for $G(T).$ We first compute the number $d_k$ of new vertices of degree $k$ for all $k.$ 
\begin{enumerate}
\item New vertices of degree 3: Note that $S\cup T$ has $st$ new perpendicular bisectors obtained by choosing one point from $S$ and one point from $T.$ Each of these new lines will meet our bounding circle in 2 points, This gives us a total of $d_3=2st$ new vertices of degree 3 in $G(S\cup T).$
\item New vertices of degree 4: We create new vertices of degree 4 whenever we choose 4 points from $S\cup T$ not contained entirely in $S$ or $T.$  Then each such subset of 4 points generates three vertices of degree 4 in $G(S\cup T).$ This gives $$d_4=3s{t \choose 3}+3{s\choose 2}{t\choose 2}+3t{s\choose 3}$$ new vertices of degree 4 in $G(S\cup T).$ (Note that this calculation is valid whether or not points selected in $S$ or $T$ are collinear.)

\item New vertices of degree 6: We create new vertices of degree 6 whenever we create a new triangle using points from both $S$ and $T.$ There are $$d_6=s{t \choose 2}+t{s\choose 2}$$ ways to create new triangles, and each such triangle gives a new vertex of degree 6 in $G(S\cup T).$

\item New vertices of higher degree: There are no new vertices of degree $k>6$ by the definition of $S\cup T.$ 
\end{enumerate}
Then the total number of  vertices in $G(S\cup T)$ is $v(S)+v(T)+d_3+d_4+d_6.$ To find the number of edges, we use $$2e(G(S\cup T))=2e(S)+2e(T)+3d_3+4d_4+6d_6.$$
Then 
\begin{eqnarray*}
r(G(S\cup T))&=& e(S)+e(T)+(3d_3+4d_4+6d_6)/2-(v(S)+v(T)+d_3+d_4+d_6)+1 \\
&=&(e(S)-v(S)+1)+(e(T)-v(T)+1))+(d_3+2d_4+4d_6)/2-1\\
&=& r(S)+r(T)+\frac{1}{4} \left(3 s^2 t^2+2 s^3 t-5 s^2 t+2 s t^3-5 s t^2+7 s t-4\right).
\end{eqnarray*}

\end{proof}

When $S$ and $T$ are both free sets (and so achieve the maximum values possible), we can verify that the formula given in Lemma~\ref{L:sum} agrees with our formula for $M(n,2)$ from Prop.~\ref{P:1max}. Let $|S|=k$ and $|T|=n-k.$ Then
\begin{eqnarray*}
a_{S\cup T} &=& M(k)+M(n-k)+\frac{1}{4} \left(k^2 \left(-3 n^2+5 n-7\right)+2 k^3 n-k^4+k \left(2 n^2-5 n+7\right) n-4\right) \\
&=& \frac{1}{24} \left(3 n^4-10 n^3+21 n^2-14 n+24\right)\\
&=&M(n).
\end{eqnarray*}

Our next lemma shows how to create a generalized trapezoid configuration that will allow us to reduce the number of regions from the maximum in a predictable way.

\begin{lem}\label{L:trap}[Generalized trapezoid gadget] 
Let $U=\{P_1,P_2, \dots, P_{2k}\} \subset \mathbb{R}^2$ be a collection of $2k$ points in the plane in free position, where $k\geq 2,$ with the following exceptional pairs of parallel lines: 
$$\overline{P_1P_2} \| \overline{P_3P_4}, \hskip.2in \overline{P_1P_3} \| \overline{P_5P_6}, \hskip.2in \overline{P_1P_4} \| \overline{P_7P_8}, \hskip.2in \cdots \hskip.2in \overline{P_1P_k} \| \overline{P_{2k-1}P_{2k}}. $$ 
Then $a_U=M(2k)-k+1.$
\end{lem}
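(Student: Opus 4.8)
The plan is to realize the orderings of $U$ as the regions of the line arrangement $\mathcal{L}(U)$ formed by the $\binom{2k}{2}$ perpendicular bisectors, and to measure the deficit from the maximum $M(2k)$ (attained by a fully free configuration, by Theorem~\ref{T:gt}) caused by the $k-1$ prescribed parallelisms. The main combinatorial tool is the standard region count for a line arrangement: adding the lines one at a time, a line meeting the earlier ones in $c$ distinct points creates $c+1$ new regions, so the total number of regions equals $1+N+\sum_p (t_p-1)$, where $p$ ranges over intersection points and $t_p$ is the number of lines through $p$. This is equivalent to the Euler-formula bookkeeping already used in Lemma~\ref{L:sum}, and its key consequence is that deleting one \emph{simple} (double) intersection point from an arrangement—by making the two lines through it parallel—lowers the region count by exactly one.

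First I would record the structure of a free configuration of $2k$ points. The only forced incidences among the perpendicular bisectors are the $\binom{2k}{3}$ circumcenters, where the three bisectors of a triple of points concur; generically every other intersection is a simple double point. These are exactly the degree-$6$ and degree-$4$ vertices appearing in the proof of Lemma~\ref{L:sum}, and this arrangement realizes $M(2k)$.

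Next I would analyze the $k-1$ stated parallelisms, the $j$-th of which forces $\mathrm{perp}(P_1P_{j+1})\parallel \mathrm{perp}(P_{2j+1}P_{2j+2})$ for $1\le j\le k-1$. I would verify three genericity facts. (i) The four indices $1,\,j+1,\,2j+1,\,2j+2$ are distinct (immediate from $j\ge 1$), so in the free arrangement these two bisectors meet at a simple double point, with no circumcenter forcing a third bisector through it. (ii) The $2(k-1)$ bisectors involved are pairwise distinct, since the ``source'' bisectors $\mathrm{perp}(P_1P_{j+1})$ all involve $P_1$ while the ``target'' bisectors $\mathrm{perp}(P_{2j+1}P_{2j+2})$ do not. (iii) The $k-1$ parallel classes have distinct slopes, because the segments $P_1P_2,\dots,P_1P_k$ emanate from $P_1$ to distinct points and so have distinct slopes generically; hence each class contains exactly two bisectors and no three bisectors become mutually parallel. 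Consequently each condition merely deletes one simple double point, the $k-1$ deleted points are distinct, and no new concurrence or parallelism is introduced elsewhere.

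Applying the region-counting identity, each deleted double point lowers the count by one, giving $a_U = M(2k)-(k-1) = M(2k)-k+1$. The delicate part is establishing (i)--(iii) \emph{simultaneously}: one must confirm that imposing the stated parallel constraints does not incidentally push some bisector through a circumcenter or create an unexpected triple point, i.e.\ that a configuration satisfying exactly these parallelisms and otherwise generic really exists. This follows from a dimension count—the parallel constraints are codimension-one conditions, while the avoidance of every remaining coincidence is an open dense condition—so the two are compatible and such a $U$ can be chosen. A quick sanity check for $k=2$ (four points, one parallel pair) gives $M(4)-1=17$, consistent with the $\binom{4}{3}=4$ circumcenters and the loss of one of the three simple crossings.
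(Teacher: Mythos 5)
Your proof is correct and takes essentially the same approach as the paper: both arguments measure the deficit from $M(2k)$ by observing that each of the $k-1$ prescribed parallelisms deletes exactly one simple crossing of two bisectors, which by Euler-formula (equivalently, sweep-count) bookkeeping lowers the region count by exactly one, giving $a_U = M(2k)-(k-1)$. The paper states this in a single line via the degree-$3$/$4$/$6$ vertex counts of Lemma~\ref{L:sum}, whereas you additionally verify the genericity and existence details (distinctness of the involved bisectors, distinctness of the parallel classes' slopes, and compatibility of the constraints) that the paper leaves implicit.
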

 \begin{proof}
Note that, compared with a configuration that produces the maximum possible number of regions,  each parallel pair reduces the number of vertices of degree 4 by 1, but does not change the number of vertices of degree 3 or 6. Then each such parallel pair reduces the total number of vertices by 1 and the total number of edges by 2. Since there are $k-1$ parallel pairs, we have $a_U=M(2k)-k+1.$
\begin{figure}[h]
   \centering
\includegraphics[width=3in]{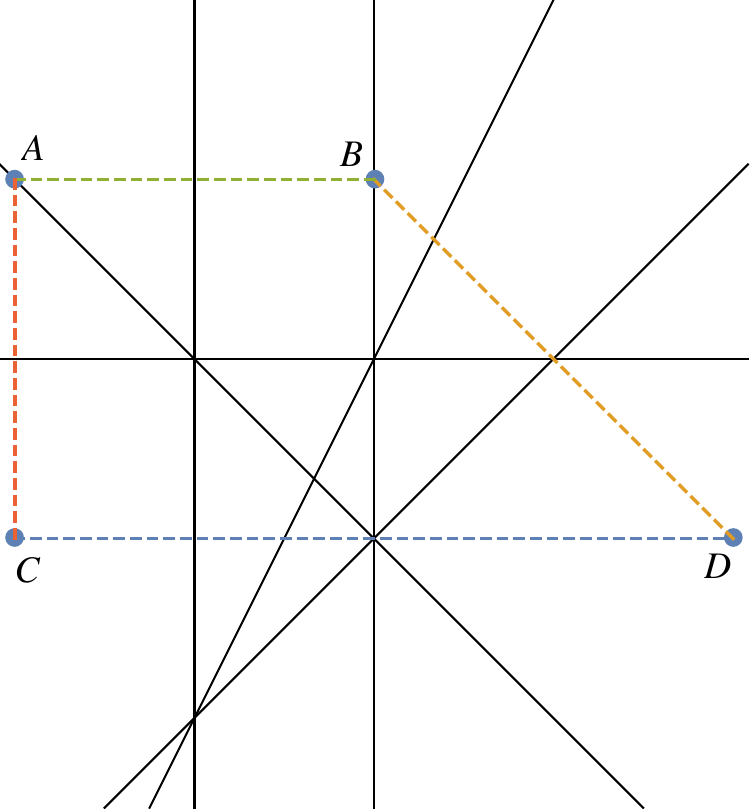} 
 \caption{Lines $\overline{AB}$ and $\overline{CD}$ are parallel, reducing the maximum  number of regions determined by the perpendicular bisectors by 1.}
\label{F:trap}
\end{figure}

 \end{proof}
 
We remark that the formula above remains valid when $k=1,$ but we will not need this fact. When $k=2,$ the configuration is a trapezoid, as in Fig.~\ref{F:trap}.  We now use Lemmas \ref{L:sum} and \ref{L:trap} to get our first gap-filling result.
 
 \begin{thm}\label{T:gap2}
 Let $k\leq \frac{n}{2}.$ Then there is an $n$-point planar point configuration $S$ with $a_S=M(n)-k.$
 \end{thm}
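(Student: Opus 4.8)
The plan is to build the desired configuration by combining the two tools just established: the trapezoid gadget of Lemma~\ref{L:trap}, which lets me shave off a controlled number of regions from the maximum, and the free-sum Lemma~\ref{L:sum}, which lets me reattach the leftover points in free position without disturbing that controlled deficit beyond a predictable additive term. The key observation is that Lemma~\ref{L:trap} produces a deficit of exactly $k-1$ from the maximum on a $2k$-point gadget, and the free-sum lemma is stated entirely in terms of $a_S$, $a_T$, and the sizes $s,t$ — so if $T$ is placed freely (achieving $a_T = M(t)$) and $S$ is the gadget with a known deficit, then $a_{S\cup T}$ inherits that same deficit from $M(n)$.

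Concretely, I would split the $n$ points into two groups. Let $S$ be a trapezoid-gadget configuration on $2(k+1)$ points (so that Lemma~\ref{L:trap}, applied with $k$ replaced by $k+1$, gives exactly $k$ parallel pairs reduced to $k$ and hence $a_S = M(2(k+1)) - k$), and let $T$ be the remaining $n - 2(k+1)$ points placed in free position, so $a_T = M(n-2(k+1))$. The hypothesis $k \le n/2$ is exactly what guarantees there are enough points to form a gadget of the required size while leaving a valid (possibly empty) free set $T$; I would need to check the small-$t$ and $t=0$ boundary cases separately, since Lemma~\ref{L:sum} presumes two genuine point sets and the gadget lemma requires $k\ge 2$. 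First I would verify that the gadget sizes and the inequality line up so that the deficit on $S$ is precisely $k$, adjusting the indexing as needed.

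The main computational step is then to apply Lemma~\ref{L:sum} to $S \cup T$ and confirm that the extra cross-term $\tfrac{1}{4}(3s^2t^2 + 2s^3t - 5s^2t + 2st^3 - 5st^2 + 7st - 4)$, when added to $a_S + a_T = M(s) + M(t) - k$, collapses to exactly $M(n) - k$. The crucial point — and the reason this works — is the identity already verified in the displayed computation right after Lemma~\ref{L:sum}: for free sets the cross-term plus $M(s)+M(t)$ equals $M(n)$. Since my $S$ differs from a free set only by the additive constant $-k$ (the gadget introduces parallelisms that lower the region count by $k$ without changing $s$, $t$, or the cross-term count of new degree-3/4/6 vertices), that same $-k$ simply passes through the free-sum formula untouched, yielding $a_{S\cup T} = M(n) - k$.

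The step I expect to be the main obstacle is the bookkeeping at the interface: I must confirm that introducing the parallel pairs inside $S$ does not interact with the newly created cross vertices counted in Lemma~\ref{L:sum}, i.e.\ that the gadget's parallelisms are genuinely internal to $S$ and that $T$ being in free position relative to $S$ preserves all $d_3, d_4, d_6$ counts. This amounts to checking that the deficit is additive and does not leak into the $S$-$T$ cross terms — intuitively clear because the parallelisms $\overline{P_1P_2}\parallel\overline{P_3P_4}$, etc., involve only perpendicular bisectors among points of $S$, while the cross-term in Lemma~\ref{L:sum} counts only vertices arising from mixed $S,T$ selections. Once that independence is established, the result follows by direct substitution, and I would also handle the degenerate cases $k = n/2$ (where $T$ is empty and the claim reduces directly to Lemma~\ref{L:trap}) and small $k$ separately to cover the full range.
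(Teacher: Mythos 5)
Your proposal follows essentially the same route as the paper's proof: a trapezoid gadget on $2k+2$ points (deficit exactly $k$ by Lemma~\ref{L:trap}) free-summed with $n-2k-2$ points in free position, then Lemma~\ref{L:sum} applied to the union. Your way of finishing---observing that since the cross-term $g(s,t)$ depends only on $s$ and $t$, the identity $M(s)+M(t)+g(s,t)=M(n)$ already verified after Lemma~\ref{L:sum} lets the deficit $-k$ pass through untouched---is a slightly tidier substitute for the paper's direct algebraic expansion, but it is the same argument in substance.
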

\begin{proof} Let $U$ be a configuration of $2k+2$ points with $k$ parallel pairs, as in Lemma~\ref{L:trap}, and let $V$ be a configuration of $n-2k-2$ points that achieves the maximum possible number of regions, so $a_V=M(n-2k-2). $ By Lemma~\ref{L:trap}, we know $a_U=M(2k+2)-k.$ Now set $S=U\cup V,$ where $U$ and $V$ are placed in the plane so that no additional incidences or parallels are produced.  Then, by Lemma~\ref{L:sum}, we have $$a_S=M(2k+2)+M(n-2k-2)-k+g(2k+2,n-2k-2),$$ where $g(s,t)=\frac{1}{4} \left(3 s^2 t^2+2 s^3 t-5 s^2 t+2 s t^3-5 s t^2+7 s t-4\right)$ is the function appearing in Lemma~\ref{L:sum} that gives the number of new regions in a free sum.

Then, expanding everything and simplifying, we have $a_S=M(n)-k.$ (The calculations are straightforward, but they are easiest to do using a computer program that handles algebra. We used  Mathematica.)

\end{proof} 
 
 Theorem~\ref{T:gap2} tells us that we can achieve any value near the maximum, i.e., given a positive integer $n$ and $0\leq k \leq \frac{n}{2},$ there is a configuration $S_k$ with $a_{S_k}=M(n)-k.$ For region counts near the minimum, we can use Theorem~\ref{T:gap1}. Combining these two results gives us the next result.

\begin{cor}\label{C:gaps}
Let $n$ be a positive integer and let $k$ be chosen so that $$k \in \left[m(n),\frac{n^2-n+2}{2} \right] \cup \left[M(n)- \frac{n}{2} ,M(n)\right].$$ Then there is a planar point configuration $S$  with $a_S=k.$
\end{cor}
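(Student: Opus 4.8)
The plan is to establish the two intervals separately, since Theorems~\ref{T:gap1} and \ref{T:gap2} are each calibrated to one of them; the corollary is then simply the union of the two families of configurations they produce. Throughout, recall that $a_S$ is always an integer, so it suffices to realize every \emph{integer} $k$ lying in each interval.

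For the lower interval $\left[m(n),\frac{n^2-n+2}{2}\right]$, I would reduce to the one-dimensional result of Theorem~\ref{T:gap1}. The one point needing justification is that a one-dimensional configuration, regarded as a collinear point set in the plane, produces the same number of orderings. Indeed, if $S=\{(a_1,0),\dots,(a_n,0)\}$ lies on the $x_1$-axis, then the perpendicular bisector of $P_i$ and $P_j$ is the vertical line $x_1=\frac{a_i+a_j}{2}$, so all perpendicular bisectors are mutually parallel. If the points determine exactly $m$ distinct midpoints, these $m$ parallel lines cut the plane into $m+1$ vertical strips, the same count as the $m+1$ intervals they determine on the line; hence $a_S(n,2)=a_S(n,1)$ for every collinear $S$. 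Since $m(n)=m(n,2)=2n-2$ by Theorem~\ref{T:min}, placing the configurations $S_k$ of Theorem~\ref{T:gap1} on a line in $\mathbb{R}^2$ realizes every integer $k$ with $m(n)\leq k\leq\frac{n^2-n+2}{2}$.

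For the upper interval $\left[M(n)-\frac{n}{2},M(n)\right]$, I would apply Theorem~\ref{T:gap2} directly: for each integer $k$ with $0\leq k\leq\frac{n}{2}$ (that is, $0\leq k\leq\lfloor n/2\rfloor$) there is a planar configuration with $a_S=M(n)-k$. Letting $k$ range over $0,1,\dots,\lfloor n/2\rfloor$ realizes every integer in $\left[M(n)-\lfloor n/2\rfloor,M(n)\right]$, and this set of integers coincides with the integers lying in $\left[M(n)-\frac{n}{2},M(n)\right]$.

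I do not expect a genuine obstacle, as this is a clean assembly of the two preceding results. The only care required is the collinear-to-planar translation recorded above and the bookkeeping that the integer endpoints coming from Theorems~\ref{T:gap1} and \ref{T:gap2} match those asserted in the corollary. Taking the union of the two families then supplies a configuration $S$ with $a_S=k$ for every integer $k$ in the stated union of intervals.
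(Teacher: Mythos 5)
Your proposal is correct and takes essentially the same route as the paper, which obtains the corollary precisely by combining Theorem~\ref{T:gap1} (realizing every value in the interval near the minimum) with Theorem~\ref{T:gap2} (realizing every value in the interval near the maximum). The collinear-to-planar translation you spell out—parallel vertical bisectors cutting the plane into strips, so $a_S(n,2)=a_S(n,1)$ for collinear $S$—is left implicit in the paper but is exactly the justification needed.
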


Thus, we know there are configurations of $n$ points whose perpendicular bisectors form exactly $k$ regions, where $2n-2\leq k \leq (n^2-n+2)/2$ or $\frac{1}{24} \left(3 n^4-10 n^3+21 n^2-26 n+24\right)\leq k \leq \frac{1}{24} \left(3 n^4-10 n^3+21 n^2-14 n+24\right).$

 We can also find configurations $S$ with $a_S$ not in the ranges specified in Corollary~\ref{C:gaps}. The next two results give examples of such configurations.
 
 \begin{prop}\label{P:par} [Parallel lines gadget]
 Let $C$ be a configuration consisting of $m$ points in free position and $k\cdot l$ points distributed onto $l$ lines, each with $k$ points arranged so that all of the perpendicular bisectors generated are distinct and no two are  parallel. Let $n=m+kl$ be the total number of points of $C.$ Then 
\begin{enumerate}
\item  \begin{eqnarray*}
a_C(n)&=&\left(18 k^2 l^2 m^2+12 k^3 l^3 m-30 k^2 l^2 m+3 k^4 l^4-10 k^3 l^3+21 k^2 l^2-3 k^4 l+10 k^3 l-9 k^2 l\right.\\
& &\left.+12 k l m^3-30 k l m^2+42 k l m-12 k l+3 m^4-10 m^3+21 m^2-14 m+24\right)/24.
\end{eqnarray*}
\item $a_C(n)=M(n)-k s(k,k-2),$ where $s(k,k-2)$ is the unsigned Stirling number of the first kind.
\end{enumerate}

  \end{prop}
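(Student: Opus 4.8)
The plan is to run the same Euler-characteristic bookkeeping used in Lemmas~\ref{L:sum} and~\ref{L:trap}, comparing the arrangement of $C$ against a fully free $n$-point configuration. I would build the graph $G(C)$ from the $\binom{n}{2}$ perpendicular bisectors of $C$ inside a large bounding circle, so that $a_C(n)=r(G(C))=e-v+1$ by Euler's relation. As recorded in the discussion preceding Prop.~\ref{P:1max}, a free configuration of $n$ points produces exactly $3\binom{n}{4}$ interior double points (degree $4$), one for each of the three disjoint pairings of a $4$-subset, and $\binom{n}{3}$ interior triple points (degree $6$), one circumcenter per triangle; together with the $\binom{n}{2}$ lines this recovers $M(n)=1+\binom{n}{2}+\left(3\binom{n}{4}+2\binom{n}{3}\right)=1+\binom{n}{2}+s(n,n-2)$. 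The only structural difference in $C$ is that, on each of the $l$ special lines, the $\binom{k}{2}$ perpendicular bisectors of the $k$ collinear points are mutually parallel.

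Next I would isolate the effect of one such line. In free position the $\binom{k}{2}$ bisectors of its $k$ points meet one another in $3\binom{k}{4}$ double points and $\binom{k}{3}$ circumcenters; collinearity makes all of these bisectors parallel, so every one of those interior intersections is destroyed, while the degree-$3$ vertices where these lines meet the bounding circle are untouched. Using exactly the bookkeeping of Lemma~\ref{L:trap}: destroying a double point (turning a transversal crossing into a parallel pair) lowers $v$ by $1$ and $e$ by $2$, hence $r$ by $1$; destroying a triple point (three concurrent bisectors becoming three parallels) lowers $v$ by $1$ and $e$ by $3$, hence $r$ by $2$. Thus a single line reduces the region count by $3\binom{k}{4}+2\binom{k}{3}=s(k,k-2)$, and summing over the $l$ lines gives a total reduction of $l\,s(k,k-2)$ from $M(n)$; this is the closed form to be recorded in part (2). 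Part (1) then follows by substituting $n=m+kl$, $M(n)=\tfrac{1}{24}\left(3n^4-10n^3+21n^2-14n+24\right)$, and $s(k,k-2)=\tfrac{1}{24}k(k-1)(k-2)(3k-1)$ and expanding; this is routine algebra, best handed to Mathematica as in Theorem~\ref{T:gap2}.

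The main obstacle is not the arithmetic but justifying that collinearizing the points changes nothing except these within-line intersections, so that the reduction is \emph{exactly} $l\,s(k,k-2)$. Concretely, I would invoke the hypothesis that all bisectors are distinct and pairwise non-parallel apart from the forced within-line parallels to confirm three things: that the arrangement still has exactly $\binom{n}{2}$ distinct lines; that no two bisectors involving the free points, or involving two different lines, become parallel; and that every surviving triple point is a genuine circumcenter of a non-collinear triple, with no accidental new concurrence introduced by the collinear triples. Only once these genericity conditions are verified is the clean subtraction of $l\,s(k,k-2)$ from the free count legitimate, and the two displayed formulas then coincide by direct expansion.
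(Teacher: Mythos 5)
Your argument is correct, but it is not the paper's argument. The paper splits $C=U\cup V$, where $U$ is the $m$ free points and $V$ is the $kl$ points on the $l$ lines; it computes $a_V(kl)$ from scratch by counting vertices of degree $3$, $4$, and $6$ in $G(V)$ and applying Euler's formula, and then assembles $a_C(n)$ with the free-sum Lemma~\ref{L:sum}. There, part (1) is the primary output and part (2) is extracted afterwards by algebra. You instead compare the full arrangement of $C$ with a free arrangement of $n$ points and tally the losses line by line, in the spirit of Lemma~\ref{L:trap}: each special line wipes out the $3\binom{k}{4}$ disjoint-pair crossings and the $\binom{k}{3}$ circumcenters among its own bisectors, costing $3\binom{k}{4}+2\binom{k}{3}=s(k,k-2)$ regions per line. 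That makes part (2) the primary, combinatorially transparent statement, with part (1) a routine expansion --- the reverse of the paper's logic, and arguably the more illuminating direction. Your closing worry about genericity is best discharged by converting the deformation language into a direct count, which needs no continuity argument: under the stated hypotheses the lines of the arrangement are the $\binom{n}{2}$ distinct bisectors, the double points are exactly the $3\binom{n}{4}-3l\binom{k}{4}$ crossing disjoint-index pairs (the non-crossing ones being the $3\binom{k}{4}$ per special line), the triple points are exactly the $\binom{n}{3}-l\binom{k}{3}$ circumcenters of non-collinear triples, and Euler's formula as in Lemma~\ref{L:sum} then gives $a_C(n)=1+\binom{n}{2}+\left(3\binom{n}{4}-3l\binom{k}{4}\right)+2\left(\binom{n}{3}-l\binom{k}{3}\right)$, which is your expression.

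A genuine dividend of your route: it exposes a typo in part (2) as printed. The loss is per \emph{line}, so the correct statement is $a_C(n)=M(n)-l\,s(k,k-2)$, exactly what you derived; this is the version consistent with part (1), since expanding part (1) against $M(m+kl)$ leaves a deficit of $\tfrac{1}{24}\,l\,k(k-1)(k-2)(3k-1)=l\,s(k,k-2)$. The printed claim $M(n)-k\,s(k,k-2)$ already fails for $m=0$, $l=1$, $k=3$: three collinear points produce $4$ regions, which is $M(3)-s(3,1)=6-2$, not $M(3)-3\,s(3,1)=0$. So your proof establishes the statement the proposition should (and, per part (1), does) make.
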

  
\begin{proof} Let $U$ be a collection of $n-kl$ points in free position in the plane, and let $V$ be composed of $kl$ points distributed on $l$ pairwise non-parallel lines, with $k$ points per line, as in the statement of the proposition. Then $C=U\cup V.$ We know $a_U(n-kl)=M(n-kl).$ It is straightforward to compute $a_V(kl)$ by counting the number of vertices of degree 3, 4, and 6 in the associated graph $G(V)$ and using Euler's formula. This gives 
$$a_V(k l)=\frac{1}{24} \left(3 k^4 l^4-10 k^3 l^3+21 k^2 l^2-3 k^4 l+10 k^3 l-9 k^2 l-12 k l+24\right).$$
The formula for $a_C(n)$ now follows from Lemma~\ref{L:sum}, completing part 1. 

For part 2, note that $s(k,k-2)=2{k \choose 3}+3{k \choose 4}.$ The rest of the calculation is completely straightforward.

\end{proof}

We omit the proof of the next result, which uses Lemma~\ref{L:sum} and the fact that $m$ points placed freely on a circle generate $2{m \choose 2}$ regions (all of which are unbounded).

\begin{prop} \label{P:cir} [Circle gadget] Let $S$ be a configuration consisting of $n-k$ points in free position and $k$ points placed on a circle so that all of the perpendicular bisectors generated are distinct and no two are  parallel. Then $$a_S=\frac{1}{24} \left(3 n^4-10 n^3+21 n^2-14 n -3 k^4+10 k^3+3 k^2-10 k\right).$$ Further, $a_S=M(n)-M(k)+k(k-1).$  
\end{prop}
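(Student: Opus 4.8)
The plan is to realize $S$ as a free sum of two pieces and apply Lemma~\ref{L:sum}. I would write $S = U \cup V$, where $U$ consists of the $n-k$ points in free position and $V$ consists of the $k$ points on the circle, placed (as the hypothesis guarantees) so that no perpendicular bisector of $U$ is parallel to one of $V$ and no additional incidences are created across the two families. Since $U$ is free, $a_U = M(n-k)$ by Theorem~\ref{T:gt}. For $V$, the key observation is that the perpendicular bisector of any chord of a circle passes through its center; hence all $\binom{k}{2}$ bisectors determined by $V$ are concurrent at the center, and by hypothesis they are distinct. A family of $j$ distinct concurrent lines cuts the plane into exactly $2j$ unbounded regions, so $a_V = 2\binom{k}{2} = k(k-1)$. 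This is precisely the ``points on a circle'' fact quoted before the statement.

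With both region counts in hand, Lemma~\ref{L:sum} gives
$$a_S = a_U + a_V + g(n-k, k) = M(n-k) + k(k-1) + g(n-k,k),$$
where $g(s,t) = \tfrac{1}{4}\left(3 s^2 t^2 + 2 s^3 t - 5 s^2 t + 2 s t^3 - 5 s t^2 + 7 s t - 4\right)$ is the free-sum correction term. Rather than expand this quartic directly, I would exploit the identity verified immediately after Lemma~\ref{L:sum}: combining two \emph{free} sets of sizes $k$ and $n-k$ shows $M(k) + M(n-k) + g(k,n-k) = M(n)$, and since $g$ is symmetric in its arguments, $g(n-k,k) = M(n) - M(k) - M(n-k)$. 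Substituting this into the display above cancels the two $M(n-k)$ terms and yields the ``further'' form
$$a_S = M(n) - M(k) + k(k-1)$$
with no polynomial algebra at all.

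The explicit quartic then follows by plugging in $M(n) = \tfrac{1}{24}(3n^4 - 10n^3 + 21n^2 - 14n + 24)$, the analogous expression for $M(k)$, and $k(k-1) = k^2 - k$; after clearing denominators the two constants $+24$ cancel, and one reads off $\tfrac{1}{24}(3n^4 - 10n^3 + 21n^2 - 14n - 3k^4 + 10k^3 + 3k^2 - 10k)$, matching the claim. I expect the only genuinely delicate point to be justifying $a_V = k(k-1)$ \emph{together with} the verification that Lemma~\ref{L:sum} still applies despite the internal degeneracy of $V$, whose bisectors all meet at one point, far from generic. The resolution is that Lemma~\ref{L:sum} records the internal structure of each piece only through $a_V$, while its new-vertex counts $d_3, d_4, d_6$ depend solely on the two sizes and on the across-families genericity granted by hypothesis. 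I would check that each mixed four-point selection involving points of $V$ still produces three simple degree-$4$ vertices with none landing on the center; this holds generically, since a cross bisector joining a circle point to an off-circle point is equidistant from the center only in the non-generic case where the off-circle point lies at distance equal to the radius.
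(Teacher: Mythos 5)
Your proposal is correct and follows exactly the route the paper indicates (the paper omits the details but states that the proof ``uses Lemma~\ref{L:sum} and the fact that $m$ points placed freely on a circle generate $2{m \choose 2}$ regions''): decompose $S$ as a free sum of the free part and the concyclic part, take $a_V = k(k-1)$ from the concurrency of the chord bisectors at the center, and apply the free-sum formula. Your shortcut of extracting $g(n-k,k) = M(n)-M(k)-M(n-k)$ from the identity verified after Lemma~\ref{L:sum}, rather than expanding the quartic, is a tidy way to organize the same algebra, and your closing remark about why the internal degeneracy of $V$ does not disturb the lemma's mixed-vertex counts matches the paper's own caveat that the count ``is valid whether or not points selected in $S$ or $T$ are collinear.''
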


We can rewrite the last equation  from Prop.~\ref{P:cir} as $M(n)-a_S=M(k)-k(k-1).$ Then we can  interpret both sides of this equation combinatorially. The left-hand side is the number of regions ``lost'' from a free configuration  by placing the $k$ points on a circle, and the right-hand side is the number of \textit{bounded} regions in a maximum configuration of $k$ points. The sequence $M(k)-k(k-1)$  appears as sequence A001701 in \cite{s}, although the interpretation in terms of lost regions appears to be new. We state this result as a corollary.

\begin{cor}\label{C:cir} Let $S$ be a configuration consisting of $n-k$ points in free position and $k$ points placed on a circle so that all of the perpendicular bisectors generated are distinct and no two are  parallel. Then  $M(n)-a_S=M(k)-k(k-1),$ i.e., the number of regions lost by placing $k$ points on a circle coincides with the number of bounded regions determined by the perpendicular bisectors in a maximum configuration of $k$ points.  
\end{cor}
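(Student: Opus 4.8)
The plan is to treat the displayed identity as essentially a bookkeeping consequence of Proposition~\ref{P:cir} and to spend the real effort on the combinatorial reading of the two sides. Proposition~\ref{P:cir} gives $a_S = M(n) - M(k) + k(k-1)$, and rearranging this immediately yields $M(n) - a_S = M(k) - k(k-1)$, so no new computation is needed for the equation itself. The left-hand side $M(n)-a_S$ is, by definition of $M(n)$ and $a_S$, the number of regions lost when the $k$ distinguished points are moved from free position onto the circle, so the interpretation on that side is automatic. Everything therefore reduces to showing that $M(k) - k(k-1)$ counts the \emph{bounded} regions in a maximum configuration of $k$ points, i.e.\ that such a configuration has exactly $k(k-1)$ unbounded regions.

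First I would record that in any configuration of $k$ points achieving $M(k)$, no two of the $\binom{k}{2}$ perpendicular bisectors are parallel. Two parallel bisectors correspond to two parallel segments $\overline{P_iP_j}$ and $\overline{P_kP_l}$ (the bisector is perpendicular to its segment, so parallel bisectors force parallel segments), and exactly as in Lemma~\ref{L:trap}, replacing such a parallel pair by a crossing pair creates one new vertex of degree $4$ and hence one additional region. A configuration with a parallel pair is therefore not maximal, so a maximum configuration has $\binom{k}{2}$ pairwise non-parallel bisectors.

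Next I would invoke the standard fact that an arrangement of $L$ pairwise non-parallel lines in the plane has exactly $2L$ unbounded regions: a large circle enclosing all intersection points meets the lines in $2L$ distinct points, cutting the circle into $2L$ arcs, and since the lines have $L$ distinct directions each arc lies on the boundary of a distinct unbounded region. Applying this with $L = \binom{k}{2}$ gives $2\binom{k}{2} = k(k-1)$ unbounded regions, so the number of bounded regions is $M(k) - k(k-1)$, matching the right-hand side and completing the interpretation.

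The hard part will be the last fact, namely that pairwise non-parallel lines yield exactly $2L$ unbounded regions even in the presence of the concurrences that are forced on perpendicular bisectors (the three bisectors of any triangle meet at its circumcenter). I expect this to be harmless, because the count of unbounded regions of a line arrangement depends only on the multiset of directions of the lines and not on their incidence pattern, so interior triple (or higher) points do not change it; the step to carry out carefully is making this independence explicit, rather than any algebra, which is entirely routine once Proposition~\ref{P:cir} is in hand.
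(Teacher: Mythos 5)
Your proposal is correct and takes essentially the same route as the paper: the paper also obtains the identity by rearranging the last equation of Prop.~\ref{P:cir}, and reads off the combinatorial interpretation from the fact that a maximum configuration of $k$ points has exactly $2\binom{k}{2}=k(k-1)$ unbounded regions (the same fact it invokes in proving Theorem~\ref{SphereMax}). Your added justifications --- that maximal configurations have pairwise non-parallel bisectors, and that $L$ pairwise non-parallel lines always bound exactly $2L$ unbounded regions regardless of concurrences --- simply make explicit what the paper leaves implicit.
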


Finally, we remark that we can combine the various configurations using the free sum operation, filling in other sequences between the minimum and maximum. In particular, the gadgets of Prop.~\ref{P:par} and \ref{P:cir} can be used to create configurations that achieve values not covered in Theorem~\ref{T:gap2}. We leave such explorations to the interested reader.

\section{Spherical configurations}\label{S:sphere}
We next consider a generalization from the plane to $S^2$, the 2-sphere, where all of our points lie on the surface of the sphere and distance is measured using geodesics. An attractive illustration concerns the placement of cell phone towers. If $n$ cell towers are placed on the surface of the earth, a user can measure the distance from their position to each tower, generating an ordering of the towers. As before, we are concerned with the number of different orderings of those towers experienced by various users around the earth.

We will be able to translate results for points in the plane from Section~\ref{SS:plane} to the sphere, including formulas for the maximum, minimum, and known intermediary ranges on the sphere. We also point out particular configurations that reveal how our method of translation from the plane to the sphere would theoretically fall short of a complete account of the achievable intermediary values on the sphere.


%

In this context, the perpendicular bisector determined by two points in the plane is replaced by a bisecting great circle that divides the sphere into two hemispheres (see Figure \ref{fig:2 pt sphere}). The planar graph formed by all of the pairwise bisecting great circles for a given set of points divides the surface of the sphere into a number of regions. As in the plane, these regions each correspond to a unique ordering of the set of points. We point out that every configuration of great circles on a sphere produces a graph whose vertices are the points of intersection of the great circles. Antipodal symmetry implies that each vertex, edge, and region of this graph has a ``mirror image'' in the graph, so the  number of vertices, the number of edges, and the number of regions must each be even (see Prop.~\ref{P:even}).

\begin{figure}[!ht]
\begin{center}
\includegraphics[width=2.5in]{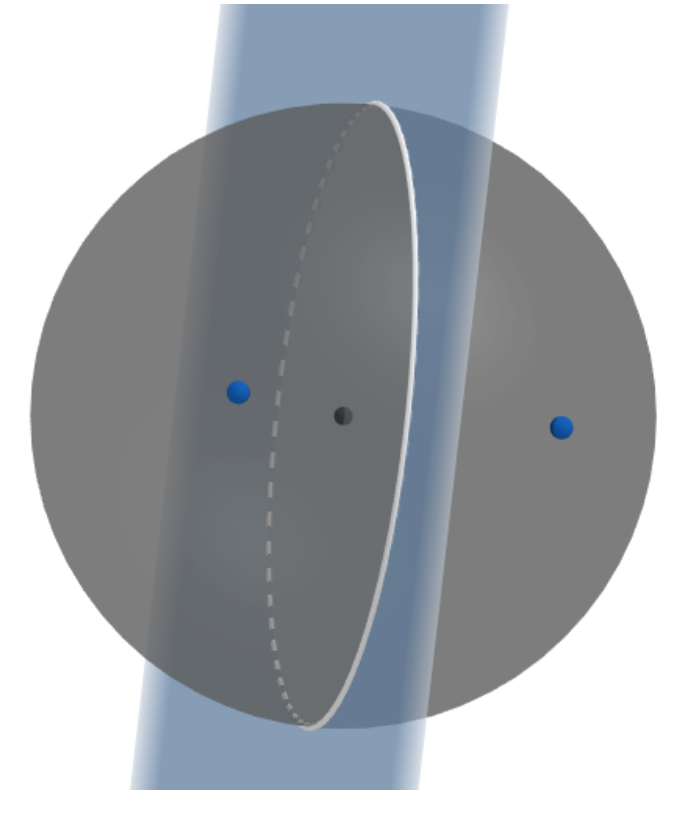}
\caption{\label{fig:2 pt sphere} Two points (in blue) and their bisecting great circle.}
\end{center}
\end{figure}

\subsection{From the plane to the sphere}\label{PlanetosphereSec}
Rather than derive results for the sphere from scratch, we first show how embedding a planar configuration onto a sphere affects the region count. This will allow us to translate several results from Section~\ref{SS:plane} to the sphere. We will use the graph $G_S$ generated by the perpendicular bisectors, but we do not need the bounding circle we introduced in Section~\ref{SS:plane}. The regions that touch that bounding circle will  now correspond to unbounded regions in the plane. 

We assume the configuration $S\subset \R^2$ has no parallel perpendicular bisectors. Then the construction we will use is easy to visualize. Begin with a $S\subset \R^2$ with centroid $A,$  then construct a large sphere centered at $A.$ Now project the points of $S$ onto the northern hemisphere. We write $\overline{S}$ for the image of $S$ on the sphere, and write $G_{\overline{S}}$ for the graph associated to the central hyperplane arrangement of bisecting great circles of $\overline{S}.$

\begin{thm}\label{T:sphere}
Let $S\subset \R^2$ be a configuration that does not generate any parallel bisectors. Suppose the associated graph $G_S$ contains $b$ bounded regions and $u$ unbounded regions. Let $\overline{S} \subset S^2$ and  $G_{\overline{S}}$ be as above. Then   $G_{\overline{S}}$ contains $u+2b$ distinct regions, so $\overline{S}$ generates $u+2b$ distinct orderings.

\end{thm}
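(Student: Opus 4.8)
The plan is to combine the antipodal symmetry of the spherical arrangement with a projection that identifies the planar graph $G_S$ with one hemisphere of $G_{\overline{S}}$. Every bisecting great circle of $\overline{S}$ is cut out by a plane through the center $A$ --- the locus $(\overline{P_i}-\overline{P_j})\cdot x=0$ --- so $G_{\overline{S}}$ is a central arrangement and is invariant under the antipodal map $x\mapsto -x$. As observed before Prop.~\ref{P:even}, this makes the vertices, edges, and regions of $G_{\overline{S}}$ occur in antipodal pairs (no region is fixed by $x\mapsto -x$, since a self-antipodal convex cone would have to contain its apex $A$, which lies on no region). In particular the antipodal map is an automorphism of $G_{\overline{S}}$ interchanging the open northern and southern hemispheres, so these two hemispheres contain equally many regions.

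First I would fix the chart. Central projection from $A$ onto a plane $\Pi$ not passing through $A$ (say the tangent plane at the north pole) is a homeomorphism from the open northern hemisphere to $\Pi$ that carries great-circle arcs to straight lines; it therefore sends the northern part of $G_{\overline{S}}$ to a planar line arrangement, which I will argue is combinatorially $G_S$. Since each region of $G_{\overline{S}}$ is geodesically convex (an intersection of hemispheres), its intersection with the open northern hemisphere is connected, so each region either lies in the open northern hemisphere, lies in the open southern hemisphere, or straddles the equator, and these three cases are exhaustive and disjoint.

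Next I would count. A bounded region of $G_S$ has compact closure and so corresponds under the projection to a region lying entirely in the open northern hemisphere; its antipodal image then lies entirely in the open southern hemisphere and meets $\Pi$ nowhere. Hence the $b$ bounded regions of $G_S$ produce $2b$ regions of $G_{\overline{S}}$: the $b$ northern ones together with their $b$ southern twins. An unbounded region of $G_S$ runs off to infinity, so its spherical counterpart reaches the equator; being convex and meeting the northern hemisphere, it must straddle, and conversely every straddling region meets the northern hemisphere in exactly one unbounded planar region. This gives a bijection between the $u$ unbounded regions of $G_S$ and the straddling regions of $G_{\overline{S}}$. As the northern, southern, and straddling regions exhaust $G_{\overline{S}}$, the total is $2b+u$. (Consistently, $u$ is even, so $2b+u$ is even, as Prop.~\ref{P:even} requires.)

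The step I expect to be the main obstacle is the claim that the northern chart of $G_{\overline{S}}$ is combinatorially $G_S$. The bisecting great circle of $\overline{P_i},\overline{P_j}$ is determined by the chord $\overline{P_i}-\overline{P_j}$ rather than by $P_i-P_j$, so it does not project exactly to the perpendicular bisector of $P_i$ and $P_j$; the identification is only combinatorial. This is precisely where the hypothesis of a \emph{large} sphere is used: for $R$ large, $\overline{S}$ is a nearly isometric copy of $S$ clustered near the north pole, where the sphere is almost flat and geodesic bisectors approximate Euclidean perpendicular bisectors, so for $R$ sufficiently large the two arrangements are combinatorially isomorphic on the hemisphere, the bounded/unbounded dichotomy is preserved, and no region degenerates onto the equator. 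The assumption that $S$ has no parallel perpendicular bisectors is needed here as well: parallel planar lines would project to great circles meeting on the equator, introducing incidences that would spoil both the region count and the straddling bijection. Once these general-position facts are established, the antipodal-plus-projection count above applies verbatim.
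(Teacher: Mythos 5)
Your proposal is correct and takes essentially the same route as the paper's own proof: identify the planar arrangement $G_S$ with the part of $G_{\overline{S}}$ in one hemisphere (the equator playing the role of the bounding circle), then use antipodal symmetry so that the $b$ bounded regions are doubled while the $u$ unbounded regions close up into regions straddling the equator, giving $u+2b$. Your write-up is in fact more careful than the paper's, which simply asserts the bounded/unbounded correspondence and the mirroring; your convexity trichotomy, the central-projection chart, and the observation that the no-parallel-bisector hypothesis is exactly what prevents great circles from meeting on the equator (which is what really makes the straddling bijection work, rather than convexity alone) fill gaps the paper leaves implicit.
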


\begin{proof}
First, note that the three bisectors generated by a spherical triangle  intersect at a common point. Moreover, points on the sphere that are concyclic (lie on a common intersection of a plane with the sphere) also behave as they do in the plane, generating bisectors that all intersect at the center of the circle. The distinction on the sphere is that these intersections occur twice, at antipodal points.


We may assume that the points of $\overline{S}$ all lie in the northern hemisphere with the equator as the bounding great circle. In this hemisphere, we will determine the number of regions whose great circle boundaries  do not touch the equator --- type A regions --- and the number of regions in which the equator is a bounding great circle --- type B regions. Then every bounded region of $G_S$ corresponds to a type A region, and every unbounded region of $G_S$ corresponds to a type B region.  As great circles intersect each other twice, all of the intersections from the planar arrangement will be mirrored on the antipodal hemisphere, doubling the number of bounded regions and closing the regions that were unbounded in the first hemisphere. This gives a total of $u+2b$ regions.

\end{proof}

Note that the equator plays the same role as the large circle we used in Section~\ref{SS:plane} in constructing the graph of perpendicular bisectors. 

\begin{ex} In Fig.~\ref{fig:sphere and plane}, we see an example of a translation of a planar configuration to a spherical configuration. In the plane, we have 5 points, 4 of which are concyclic such that we have 2 bisectors that coincide. Placing 5 points on the sphere with these same properties, we see that the 34 regions on the hemisphere shown correspond to the 34 regions in the plane. Since the opposite hemisphere will be its mirror image, there will be an additional 16 bounded regions there. Thus, this spherical configuration gives a total of 50 achievable orderings.

\begin{figure}[!ht]
\begin{center}
\includegraphics[width=5in]{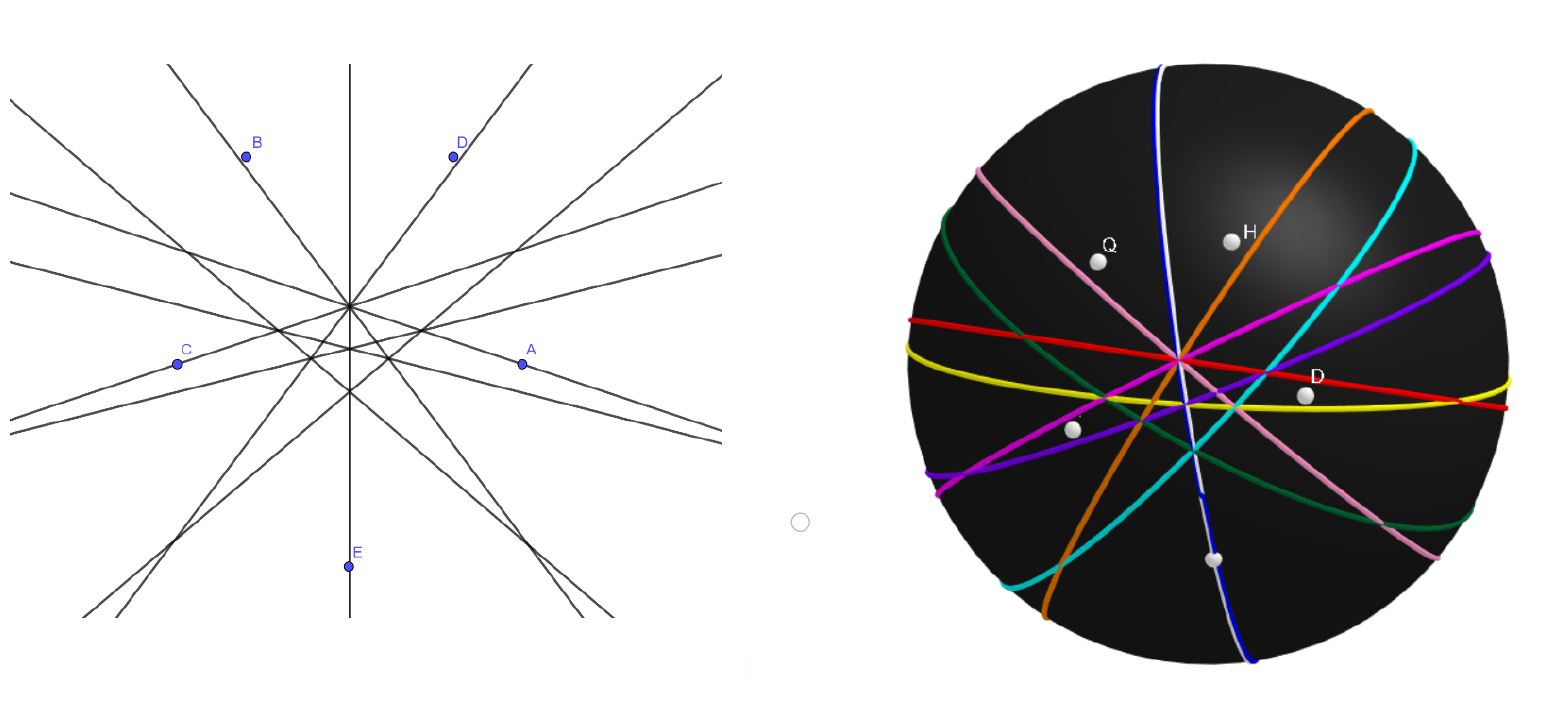}
\caption{\label{fig:sphere and plane} A configuration of 5 points (with 4 concyclic points and one pair of coinciding bisectors) in the plane, replicated on the sphere.}
\end{center}
\end{figure}

\end{ex}

This formula not only allows us to determine the number of orderings generated by particular spherical configurations, but also will make it possible to fill in gaps between the maximum and minimum on the sphere using only our data from the plane. In the next section, we apply this result to determine the maximum, minimum, and intermediate ranges on the sphere.

\subsection{Maximum, minimum, and gap filling}

The maximum number of orderings that can be obtained from a configuration of $n$ points on the sphere is given by another degree 4 polynomial, presented in the next theorem, originally found by Cover \cite{cov} in 1967.  These maximum values for $n \geq 2$ points appear as sequence A087645 in the OEIS \cite{s}. Our proof uses Proposition \ref{P:1max} and Theorem  \ref{T:sphere},  but we note that a  proof using Euler's polyhedra formula is straightforward.

\begin{thm}\label{SphereMax}
The maximum number of orderings that can be obtained from an arrangement of $n > 1$ points on the surface of the sphere is
$$\frac{1}{12}(3n^4-10n^3+9n^2-2n+24).$$
\end{thm}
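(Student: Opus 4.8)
The plan is to read off the value for a free configuration directly from the planar count via Theorem~\ref{T:sphere}, and then argue that a free configuration is optimal. First I would fix a free planar configuration $S$ of $n$ points, which by Proposition~\ref{P:1max} realizes $a_S = M(n,2) = \frac{1}{24}(3n^4 - 10n^3 + 21n^2 - 14n + 24)$ regions; being free it has no parallel bisectors, so Theorem~\ref{T:sphere} applies. Writing $u$ and $b$ for the numbers of unbounded and bounded regions of $G_S$, Theorem~\ref{T:sphere} tells me the lifted configuration $\overline{S}$ produces exactly $u + 2b = (u+b) + b = M(n,2) + b$ orderings on the sphere, so the whole computation reduces to finding $b$.

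Next I would compute $u$. In a free configuration all $\binom{n}{2}$ segments have distinct slopes, hence so do their perpendicular bisectors; since $k$ lines with distinct slopes determine $2k$ unbounded regions (the fact already used in the $d=2$ case of the proof of Theorem~\ref{T:min}), we get $u = 2\binom{n}{2} = n^2 - n$. Therefore $b = M(n,2) - (n^2 - n)$, and substituting into $M(n,2) + b$ gives
$$a_{\overline{S}} = 2M(n,2) - (n^2 - n) = \frac{1}{12}(3n^4 - 10n^3 + 9n^2 - 2n + 24)$$
after routine simplification (most easily verified with a computer algebra system, as elsewhere in the paper).

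The remaining, and main, obstacle is maximality: showing no configuration beats the free one, since the displayed formula only counts one family. I would argue at the level of the great-circle arrangement, where the sole unavoidable degeneracies are the circumcenter concurrences forced on \emph{every} configuration (the three bisectors of any triple of points meet at two antipodal points). The key observations, obtained by perturbation, are that (i) introducing one ``parallel'' pair in the plane leaves $u + 2b$ unchanged, because on the sphere the two great circles still meet on the equator and no region is lost; while (ii) any accidental concurrence strictly decreases $u + 2b$ by $2$, since it deletes an antipodal pair of small spherical triangles. Because a free configuration carries no accidental concurrences and no degeneracies beyond the forced circumcenters shared by all configurations, it maximizes $u + 2b$, which establishes that the displayed quantity is the maximum. (Equivalently, one may cite Cover's theorem \cite{MR211892} for maximality and regard the translation above purely as the device that supplies the explicit value.)
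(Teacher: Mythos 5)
Your computation is exactly the paper's proof: lift a planar configuration achieving $M(n,2)$ via Theorem~\ref{T:sphere}, count $u=2\binom{n}{2}$ unbounded regions (one antipodal pair per distinct bisector) and $b=M(n,2)-2\binom{n}{2}$ bounded ones, and evaluate $u+2b$; your rearrangement $2M(n,2)-(n^2-n)$ is the same arithmetic. Where you differ is that you attempt to supply the maximality argument, which the paper's proof does not contain at all --- the paper attributes the result to Cover \cite{MR211892} and merely remarks that a direct proof via Euler's polyhedral formula is straightforward. Your perturbation sketch has the right ingredients (parallelism is not a degeneracy on the sphere; accidental concurrences cost regions; only the circumcenter concurrences are forced on every configuration), but as written it would not stand alone: in claim (ii) the loss is \emph{at least} $2$, not exactly $2$, since higher-order concurrences cost more; both claims are asserted by appeal to ``perturbation'' rather than proved; and, more substantively, you overlook a degeneracy with no planar analogue --- if the configuration contains an antipodal pair of points, then by Lemma~\ref{L:pairs} distinct pairs can produce \emph{coinciding} bisecting great circles, so a genuine upper-bound argument must also handle arrangements with fewer than $\binom{n}{2}$ distinct circles, not just extra concurrences. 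Since you explicitly offer Cover's theorem as the fallback for maximality, which is precisely the paper's own position, your proposal as a whole is sound and sits at the same level of rigor as the paper's proof.
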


\begin{proof}
Consider a set $S$ of $n$ points in the plane that generate the maximum number of achievable orderings. The $\binom{n}{2}$ distinct bisectors each generate two unbounded regions, meaning there are $2\binom{n}{2}$ unbounded regions and $M(n,2)-2\binom{n}{2}$ bounded regions. 

Now form the configuration $\overline{S} \subset S^2.$ Then, by Theorem \ref{T:sphere},  $\overline{S}$ contains  $u+2b$ regions, where $u$ and $b$ are the numbers of unbounded and bounded regions, respectively, in $S.$ Thus,

\begin{align}
    u+2b & = 2\binom{n}{2}+2\bigg[M(n,2) -2\binom{n}{2}\bigg] \nonumber \\
    & = n(n-1)+2\bigg[\frac{1}{24}\big(3n^4-10n^3+21n^2-14n+24\big)-n(n-1)\bigg]  \nonumber \\ &= \frac{1}{12}(3n^4-10n^3+9n^2-2n+24). \nonumber
\end{align}

\end{proof}

We now turn our attention to the minimum. The following lemma translates results about concyclic arrangements of points in the plane to  concyclic arrangements on the sphere.

\begin{lem}\label{L:circ}
An arrangement of concyclic points on the sphere gives the same number of orderings as that arrangement would in the plane.
\end{lem}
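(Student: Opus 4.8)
The plan is to exploit the fact that, in both geometries, the bisectors of a concyclic point set form a \emph{central} arrangement, and then to show that the two arrangements have the same number of distinct members.

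First I would record the structural fact underlying everything. If the $k$ points lie on a circle $\mathcal{C}$ in the plane with center $O$, then the perpendicular bisector of any chord passes through $O$, so all the bisectors belong to the single pencil of lines through $O$. On the sphere, suppose the points lie on the circle $\overline{\mathcal{C}} = \Pi \cap S^2$ for a plane $\Pi$. Writing the bisecting great circle of two points $A,B$ as $\{X \in S^2 : X\cdot(A-B)=0\}$ and noting that $A-B$ is parallel to $\Pi$ (since $A$ and $B$ are equidistant from $\Pi$), the axis of $\Pi$ through the sphere's center lies in every such bisecting plane; hence all bisecting great circles pass through the two antipodal poles $N,S$ of $\Pi$. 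This is exactly the observation already used in the proof of Theorem~\ref{T:sphere}. In both settings the bisectors therefore meet only at a single common locus -- the point $O$, respectively the antipodal pair $\{N,S\}$ -- so a family of $d$ distinct planar bisectors cuts the plane into $2d$ sectors, while a family of $d$ distinct bisecting great circles cuts the sphere into $2d$ lunes. In each case the number of orderings equals $2d$, where $d$ is the number of \emph{distinct} bisectors.

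The crux is then to show that this number $d$ is the same in the plane and on the sphere. I would parameterize the points by angles $\theta_1,\dots,\theta_k$ along $\mathcal{C}$ (resp. $\overline{\mathcal{C}}$). A short computation with sum-to-product identities shows that the chord joining the points at angles $\theta_i,\theta_j$ has direction proportional to $\left(-\sin\tfrac{\theta_i+\theta_j}{2},\ \cos\tfrac{\theta_i+\theta_j}{2}\right)$; consequently the planar perpendicular bisector is the line through $O$ at angle $\tfrac{\theta_i+\theta_j}{2}\pmod\pi$, and the spherical bisecting great circle is the meridian through $N,S$ at longitude $\tfrac{\theta_i+\theta_j}{2}\pmod\pi$. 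Thus both bisectors are governed by the \emph{same} invariant $\tfrac{\theta_i+\theta_j}{2}\bmod\pi$. Two pairs $\{i,j\}$ and $\{i',j'\}$ therefore determine the same planar bisector if and only if they determine the same spherical bisecting great circle, so the two coincidence patterns are identical and the counts $d$ of distinct bisectors agree. Combining this with the previous paragraph, the concyclic arrangement yields $2d$ orderings on the sphere and $2d$ in the plane, proving the lemma.

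I expect the main obstacle to be precisely this coincidence-matching step: verifying that distinct pairs of points collapse to a common bisector under exactly the same condition in both geometries, so that the possibly non-generic cases (several pairs sharing a bisector) are handled uniformly. The region count is essentially automatic once the bisectors are known to form a central arrangement, and the identification of the common locus ($O$ versus $\{N,S\}$) is immediate; it is the exact algebraic correspondence of the two bisector families that must be established with care.
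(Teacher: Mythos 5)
Your proof is correct and follows essentially the same route as the paper's: both arguments rest on the observation that the bisectors form a concurrent family---through the circle's center in the plane, through the antipodal poles of the circle's plane on the sphere---so each geometry yields exactly $2d$ regions (sectors, respectively lunes) for $d$ distinct bisectors. Your angle parameterization showing that two pairs of points share a bisector in the plane exactly when they share a bisecting great circle on the sphere (both governed by $\tfrac{\theta_i+\theta_j}{2}\bmod\pi$) is a welcome explicit verification of the step the paper's proof leaves implicit, namely that the count $d$ of distinct bisectors is the same in both settings.
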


\begin{proof}
In the plane, $n$ concyclic points generate bisectors that intersect at the center of the circle. Such a configuration creates only unbounded regions in the plane, precisely twice  the number of distinct perpendicular bisectors created. On the sphere, the bisecting great circles formed by the $n$ points will still all intersect at the center  of that circle, creating two regions (from antipodal symmetry) for each distinct perpendicular bisector. Thus, a given concyclic arrangement gives the same number of regions on the sphere as in the plane.

\end{proof}

The next result establishes the minimum number of regions achievable on a sphere, achieved by a concyclic arrangement of $n$ points equally spaced on a circle.

\begin{thm}\label{T:minsphere}
Let $n>4.$ Then the minimum number of orderings that can be obtained from an arrangement of $n$ points on the sphere is $2n$, occurring precisely when the points are equally spaced concyclically on the sphere.
\end{thm}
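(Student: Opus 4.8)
The plan is to reduce everything to counting the distinct \emph{bisecting great circles} determined by the configuration and then to invoke two classical lower bounds on the number of directions determined by a point set. For the upper bound I would take $n$ points equally spaced on a circle. By Lemma~\ref{L:circ} this configuration produces the same number of orderings on the sphere as it does in the plane, and in the plane all $\binom{n}{2}$ perpendicular bisectors pass through the center of the circle. The bisector of $P_i$ and $P_j$ is determined by the angular sum $\theta_i+\theta_j \pmod{2\pi}$ (with $\theta_k=2\pi k/n$), and there are exactly $n$ distinct such directions; $n$ concurrent lines cut the plane into $2n$ regions, establishing the upper bound $2n$.

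For the lower bound I would first record a self-contained counting step: building the arrangement of bisecting great circles one circle at a time, the first circle creates $2$ regions and each subsequent great circle meets the previous ones in at least one antipodal pair of points, hence is cut into at least two arcs and contributes at least two new regions. Thus $c$ distinct bisecting great circles determine at least $2c$ regions, with equality exactly when all of them pass through one common antipodal pair. Next I would identify $c$: since the bisecting great circle of $P_i,P_j$ is the central plane with normal $P_i-P_j$, two bisectors coincide iff the chords $P_i-P_j$ and $P_k-P_l$ are parallel in $\mathbb{R}^3$, so $c$ equals the number of distinct \emph{directions} determined by $S$. Moreover all bisectors share an antipodal pair iff some point of the sphere is equidistant from all the $P_i$, i.e. iff $S$ is concyclic (equivalently coplanar, since a coplanar subset of $S^2$ lies on a circle).

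This splits the argument into two cases. If $S$ is concyclic, the bisectors are concurrent, the region count is exactly $2c$, and $c$ is the number of directions determined by $n$ concyclic points, which is at least $n$ by the theorem of Erd\"{o}s and Steinberg \cite{MR28289,MR1525919}; hence the count is at least $2n$. If $S$ is not coplanar, the directions theorem of Pach, Pinchasi and Sharir \cite{MR2343314} gives $c\ge 2n-5$, while non-concyclicity forbids concurrency, so by the counting step the region count is at least $2c+2\ge 4n-8>2n$ whenever $n>4$. Therefore the minimum $2n$ is realized only by concyclic configurations determining exactly $n$ directions.

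The main obstacle is the final rigidity statement: a concyclic configuration with exactly $n$ directions must be equally spaced when $n>4$. Writing the points at angles $\theta_1,\dots,\theta_n$, the condition says the restricted sumset $\{\theta_i+\theta_j : i\ne j\}$ has exactly $n$ elements in the circle group $\mathbb{R}/2\pi\mathbb{Z}$. I expect to force $\{\theta_i\}$ to be a coset of the order-$n$ subgroup (i.e. equally spaced points) either through the equality case of the direction bound or through a Kneser/Freiman-type structure theorem for sumsets in $\mathbb{R}/2\pi\mathbb{Z}$. The hypothesis $n>4$ is essential and mirrors the exception in the planar Theorem~\ref{T:min}: a non-square rectangle is concyclic and determines exactly four directions, so for $n=4$ the minimizer is not unique.
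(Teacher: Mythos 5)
Your overall architecture tracks the paper's proof closely: the upper bound via Lemma~\ref{L:circ} and equally spaced concyclic points, the identification of bisecting great circles with chord directions (normals of central planes), the dichotomy between concyclic and non-coplanar configurations, and the use of the Pach--Pinchasi--Sharir bound of $2n-5$ directions \cite{MR2343314} in the non-coplanar case. Two of your refinements are genuine improvements in cleanliness: (i) your observation that non-concyclicity forbids concurrency of the bisecting great circles, so that $c$ non-concurrent circles give at least $2c+2\ge 4n-8>2n$ regions for all $n>4$, handles $n=5$ uniformly, whereas the paper's weaker count of $4n-10$ regions forces it to defer the $n=5$ non-concyclic case to ``the reader can check''; (ii) invoking the Erd\H{o}s--Steinberg bound \cite{MR28289,MR1525919} to get $c\ge n$ in the concyclic case is a sound substitute for the paper's incidence count $\sum_{C}\alpha(C)=\binom{n}{2}$ with $\alpha(C)\le\lfloor n/2\rfloor$. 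All of these steps are correct as you state them.

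The genuine gap is the rigidity step, and you name it yourself: you do not prove that a concyclic configuration with exactly $n$ distinct bisecting great circles (equivalently, angles $\theta_1,\dots,\theta_n$ whose restricted sumset $\{\theta_i+\theta_j : i\neq j\}$ in $\mathbb{R}/2\pi\mathbb{Z}$ has exactly $n$ elements) must be equally spaced when $n>4$; you only say you ``expect to force'' it from the equality case of the direction bound or from a Kneser/Freiman-type theorem. This is not a citation-level step that can be outsourced: Kneser's theorem concerns \emph{full} sumsets $A+A$, while here the doubles $2\theta_i$ are excluded, and the $n=4$ rectangle (your own example) shows exactly the kind of coset-of-a-proper-subgroup structure that a restricted-sumset argument must rule out for $n>4$; so there is real work hiding in that sentence, and it is precisely the ``precisely when'' half of the theorem. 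The paper closes this step with a concrete (if terse) pigeonhole argument: since the pairs bisected by a fixed great circle $C$ are pairwise disjoint, $\alpha(C)\le\lfloor n/2\rfloor$, and $\sum_{C}\alpha(C)=\binom{n}{2}$ then forces a minimal family of circles to pair off almost all points under every bisector, a matching structure the paper asserts occurs only for equally spaced points. Your proposal is incomplete until you supply an argument of this kind (or a correct structure theorem for restricted sumsets in $\mathbb{R}/2\pi\mathbb{Z}$), and note that whatever argument you choose must also cover $n=5$ in the concyclic case, which your $2c+2$ refinement does not touch.
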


\begin{proof}
Let $S\subset S^2$ be our collection of $n$ points on the sphere. First, by Lemma~\ref{L:circ}, we know that  $n$ points spaced equally on the equator of a sphere generate precisely $2n$ orders. It remains to show that this is the minimum possible, i.e., the  number of regions is greater for other spherical configurations. To see this, first suppose $S$ does not lie on a circle, but the bisecting great circles generated by $S$ produce at most $2n$ regions. Then the points of $S$ are not coplanar (considered as a subset of $\R^3$), so, by Theorem 1.1 of \cite{pps}, these points determine at least $2n-5$ distinct directions in $\R^3.$ Each such direction determines a unique normal vector direction for the perpendicular bisecting plane. All of these bisecting planes pass through the center of the sphere (because the points lie on that sphere), so the point configuration generates at least $2n-5$ distinct great circles on the sphere. This gives us at least $4n-10$ regions, since each new great circle adds a minimum of two new regions. 

Now $2n < 4n-10$ whenever $n>5,$ so, if $n>5,$ the points of $S$ must be concyclic on the sphere. It remains to show that they are also equally spaced on that circle. For a given bisecting great circle $C,$ let $\alpha(C)$ denote the number of distinct pairs of points of $S$  with bisecting great circle $C.$ It is clear that $1\leq \alpha(C) \leq \lfloor \frac{n}{2}\rfloor.$ 

Let $\mathcal{C}$ denote the collection of all the bisecting great circles. Then an incidence count shows 
$$\sum_{C \in \mathcal{C}}\alpha (C)={n \choose 2}$$ since every pair of points produces a great circle. Thus, $|\mathcal{C}|$ will be minimized when $\alpha(C)=\lfloor\frac{n}{2}\rfloor$ for all $C\in\mathcal{C},$ i.e., when $\alpha(C)$  is as large as possible. But this occurs only when the points of $S$ are equally spaced on a circle.

Finally, when $n=5$, the reader can check that configurations $S$ which are either non-concyclic or cyclic with non-evenly spaced points produce more than $10$ regions.

\end{proof}

When $n=4$, the minimum number of regions created is still $2n=8$. To create eight regions, the four points must be concyclic so that all of the great circles will intersect at the same pair of antipodal points. But the points need not be evenly spaced since rectangles also minimize the number of distinct great circles. This situation is completely analogous to the minimum value in the plane (see Thm.~\ref{T:min}), where the $n=4$ case is also exceptional.

For the remainder of this section, we focus on filling in the gaps between the maximum and minimum values on the sphere. The next proposition guarantees that odd numbers of orderings are not  achievable by any spherical configurations.
\begin{prop}\label{P:even}
Any arrangement of $n>1$ points on a sphere generates an even number of orderings.
\end{prop}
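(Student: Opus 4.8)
The plan is to exploit the antipodal symmetry of any great-circle arrangement on the sphere, which was already flagged in the paragraph preceding Theorem~\ref{T:sphere}. The key observation is that every bisecting great circle, being the intersection of the sphere with a plane through the center, is invariant under the antipodal map $x \mapsto -x$. Consequently the entire arrangement of great circles generated by a point set $S$ is carried to itself by this map, and so the induced cell decomposition of the sphere (its vertices, edges, and regions) is preserved as well.

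First I would make precise that the antipodal map $\sigma(x) = -x$ is a fixed-point-free involution of $S^2$ that maps the arrangement to itself: since each bounding great circle lies in a central plane $\{x : \langle x, \mathbf{n}\rangle = 0\}$, we have $\sigma(C) = C$ for every great circle $C$, so $\sigma$ permutes the circles trivially and hence sends regions to regions, edges to edges, and vertices to vertices. Next I would argue that $\sigma$ acts \emph{freely} on the set of open regions, i.e.\ no region is mapped to itself. This is the crux: if some open region $R$ satisfied $\sigma(R) = R$, then because $R$ is connected and $\sigma$ is continuous, $R$ would have to contain a fixed point of $\sigma$ (one can connect any $p \in R$ to $\sigma(p) \in R$ by a path in $R$ and apply an intermediate-value/degree argument), but $\sigma$ has no fixed points on $S^2$. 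Hence the regions are partitioned into antipodal pairs $\{R, \sigma(R)\}$, forcing the total number of regions to be even. Since Theorem~\ref{T:sphere} and the discussion establish that regions correspond bijectively to orderings, the number of orderings is even.

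The main obstacle I expect is the freeness claim, namely ruling out a region invariant under $\sigma$. The subtlety is that a single region could in principle be large and ``wrap around'' so that $\sigma$ permutes it nontrivially without a fixed point. The clean way to close this is to note that a region invariant under $\sigma$ would be a nonempty open connected subset $R$ of $S^2$ with $\sigma(R)=R$; picking $p \in R$ and a path $\gamma$ in $R$ from $p$ to $-p$, the concatenation of $\gamma$ with $-\gamma$ is a loop in $R$ whose image in real projective space $\mathbb{RP}^2$ is a noncontractible loop, yet $R$ injects into an open cell (hence a disk) under the quotient map, a contradiction. Alternatively, and more elementarily, I would observe that $\sigma$ restricted to the closed region $\overline{R}$ is a homeomorphism of a closed disk to itself without boundary fixed points, and Brouwer's fixed-point theorem then supplies an interior fixed point, again contradicting that $\sigma$ is fixed-point-free. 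Either route avoids any case analysis on the combinatorial structure of the arrangement, so the argument is uniform in $n$.

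I would conclude by remarking that the same antipodal pairing shows the number of vertices and the number of edges are also even, which is consistent with (and in fact reproves) the parity statement implicit in the even-valued maximum and minimum formulas of Theorems~\ref{SphereMax} and \ref{T:minsphere}. This gives a self-contained, symmetry-based proof that requires none of the explicit region counts, only the geometric fact that bisecting great circles are centrally symmetric.
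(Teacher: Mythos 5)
Your overall strategy---the antipodal map preserves the great-circle arrangement, so regions fall into antipodal pairs and it suffices to show no region is carried to itself---is exactly the engine of the paper's proof; where you differ is in how you rule out a self-antipodal region, and that is where your write-up has problems. Your first closure, via $\mathbb{RP}^2$, is wrong as stated on two counts. Writing $\pi\colon S^2\to\mathbb{RP}^2$ for the quotient map, the concatenation of $\gamma$ with $\sigma\circ\gamma$ projects to the loop $\pi\circ\gamma$ traversed \emph{twice}, and since $\pi_1(\mathbb{RP}^2)\cong\mathbb{Z}/2$ this doubled loop is contractible; the noncontractible loop you want is $\pi\circ\gamma$ by itself (it is a loop because $\pi(p)=\pi(-p)$, and it is essential because its lift $\gamma$ is not closed). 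Worse, the clause ``$R$ injects into an open cell under the quotient map'' is inconsistent with the very hypothesis you are refuting: if $\sigma(R)=R$ then $\pi|_R$ is two-to-one, not injective. The argument can be repaired---$\pi\circ\gamma$ lies in $\pi(R)$, which is disjoint from the projective line $\pi(C)$ for any bisecting great circle $C$, and $\mathbb{RP}^2\setminus\pi(C)\cong\R^2$ is simply connected, a contradiction---but note that this repair is precisely where the hypothesis $n>1$ (existence of at least one bisecting great circle) enters. Your second closure, via Brouwer, is sound in outline but rests on the unproved claim that $\overline{R}$ is a closed disk. That claim needs justification (chambers of a central hyperplane arrangement are intersections of open half-spaces, so regions on the sphere are spherically convex and, once at least one circle exists, contained in an open hemisphere), and it is false when $n=1$: there $\overline{R}=S^2$ and the proposition itself fails, since the single region gives an odd count. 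So again $n>1$ must be invoked, and your proposal never does so explicitly.

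The paper closes the same step with a one-line observation that makes all of this machinery unnecessary: since $n>1$, fix one bisecting great circle $C$; every region is disjoint from $C$, hence lies in one of the two open hemispheres bounded by $C$; and the antipodal map swaps those hemispheres, so it can fix no region and the total count is twice the number of regions in one hemisphere. If you replace your freeness argument by this observation (or by either repaired version above), your proof is complete and matches the paper's in substance.
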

\begin{proof}
Given $n>1$ points and all of their pairwise bisecting great circles on the sphere, divide the sphere along one of these great circles so that it is separated into two hemispheres. Since both hemispheres are bounded by this great circle, each hemisphere contains only bounded regions. Then  antipodal symmetry implies that the total number of regions on this sphere (and so, the number of orderings generated) is twice the number of regions found on one of these hemispheres, and therefore is always even.

\end{proof}

We can use ideas similar to those given in the proof of Theorem~\ref{T:gap1}  to get a similar gap-filling result for the sphere. To accomplish this,  convert a given linear configuration to a concyclic one that lives on a sphere.  We leave the details of this argument to the interested reader.

\begin{prop}\label{P:gapsphere1}
With a configuration of $n > 3$ concyclic points on the sphere, $2k$ orderings are achievable for all $k$ such that $n \leq k \leq \binom{n}{2}$.
\end{prop}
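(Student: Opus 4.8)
The plan is to reduce the statement to a purely additive count. By Lemma~\ref{L:circ}, a concyclic configuration on $S^2$ produces exactly as many orderings as the same configuration does in the plane, so it suffices to work with $n$ points on a circle in $\R^2$. For concyclic points every perpendicular bisector passes through the center of the circle, so the bisectors form a pencil of $m$ concurrent lines, which cut the plane into exactly $2m$ regions. Parametrizing the points by their angles $\theta_1,\dots,\theta_n$, the bisector of the chord joining $\theta_i$ and $\theta_j$ is the diameter in direction $\tfrac{\theta_i+\theta_j}{2}\pmod{\pi}$, so two chords share a bisector precisely when $\theta_i+\theta_j\equiv\theta_k+\theta_l\pmod{2\pi}$. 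Hence the number of orderings equals $2m$, where $m=\big|\{\,\theta_i+\theta_j \bmod 2\pi : i<j\,\}\big|$ is the number of distinct pairwise angle-sums. The proposition thus becomes: realize every integer $m$ with $n\le m\le\binom{n}{2}$ as such a count.

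For the range $n\le m\le 2n-3$ I would use the equally spaced grid $\theta_j=\tfrac{2\pi j}{m}$ for $j=0,1,\dots,n-1$; these $n$ points are distinct because $m\ge n$. Here $\theta_i+\theta_j\equiv\tfrac{2\pi(i+j)}{m}$, so the number of distinct sums equals the number of distinct residues of $i+j\pmod m$ as $(i,j)$ ranges over pairs with $i<j$. The achievable values of $i+j$ are exactly the consecutive integers $1,2,\dots,2n-3$, and since $m\le 2n-3$ this block of length $2n-3$ contains a complete residue system modulo $m$; hence every residue is hit and the count is exactly $m$. Taking $m=n$ recovers the regular $n$-gon, consistent with the Erd\H{o}s--Steinberg lower bound. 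This one family covers the entire lower range in a single stroke, and it is precisely the wraparound modulo $2\pi$ that lets the count drop below the planar minimum of $2n-3$.

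For the upper range $2n-3\le m\le\binom{n}{2}$ I would import the linear gap-filling result. By Theorem~\ref{T:gap1} in its sum-set form, for each such $m$ there is a collection of reals $a_1<\cdots<a_n$ with exactly $m$ distinct pairwise sums $a_i+a_j$. Placing the points at angles $\theta_j=\varepsilon a_j$ on a short arc, with $\varepsilon$ small enough that no pairwise sum wraps past $2\pi$, puts the distinct angle-sums in bijection with the distinct real sums, giving count exactly $m$. Since the grid family saturates at $2n-3$ once $m>2n-3$, this is exactly where the linear construction must take over; together the two families realize every $m\in[n,\binom{n}{2}]$, hence every even ordering count $2k$ with $n\le k\le\binom{n}{2}$.

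The main obstacle is the routine-but-delicate bookkeeping rather than any deep idea: one must verify that the $2m$-region formula is exact (the pencil is genuinely $m$ concurrent lines), that the grid points are distinct and the residue-covering argument is tight, and that the $\varepsilon$-embedding of the linear configuration introduces no accidental coincidences or wraparound. The single conceptual point, already hinted at in the text, is recognizing that passing to pairwise sums modulo $2\pi$ is what allows the ordering count to dip all the way to $2n$, below the linear floor of $2(2n-3)+2$.
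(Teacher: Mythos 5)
Your proof is correct, and it fills in a gap that the paper itself leaves open: the paper's ``proof'' of this proposition is only a two-sentence sketch (use ideas from Theorem~\ref{T:gap1} and ``convert a given linear configuration to a concyclic one''), with details explicitly left to the reader. Your reduction is the same one the paper intends: pass to the plane via Lemma~\ref{L:circ}, observe that the bisectors of concyclic points form a pencil of $m$ concurrent lines giving exactly $2m$ regions, and identify $m$ with the number of distinct angle-sums $\theta_i+\theta_j \bmod 2\pi$. Your upper range is exactly the paper's suggested conversion: the sum-set form of Theorem~\ref{T:gap1} embedded on a short arc with $\varepsilon$ small enough to prevent wraparound, which realizes every $m \in [2n-3, \binom{n}{2}]$. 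Where you go beyond the sketch is the lower range $n \le m \le 2n-4$: converted linear configurations can never achieve fewer than $2n-3$ distinct sums, so some wraparound construction is genuinely required, and your grid $\theta_j = 2\pi j/m$ (consecutive vertices of a regular $m$-gon, with the block $\{1,\dots,2n-3\}$ of sums covering all residues mod $m$ precisely because $m \le 2n-3$) supplies it cleanly and non-incrementally, in contrast to the rounds-based perturbation argument that ``ideas similar to Theorem~\ref{T:gap1}'' would suggest. One trivial slip in your closing commentary: the ``linear floor'' for a wraparound-free concyclic embedding of a minimum linear configuration is $2(2n-3) = 4n-6$ regions, not $2(2n-3)+2$; this does not affect the argument.
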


As in the plane, it is probably not possible to determine the ranges of achievable orderings as a function of $n$ in general. We offer one last result in this direction, however, to add another interval of known achievable numbers of orderings on the sphere. 

\begin{prop}
With $n \geq 5$ points on a sphere, for every $t$ such that $(n-1) \leq t \leq \binom{n-1}{2}$, there is a spherical configuration that achieves $2nt$ orderings consisting  of $n-1$ concyclic points and 1 additional point not on that circle.
\end{prop}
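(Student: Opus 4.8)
The plan is to build the configuration explicitly and then count regions on the sphere by adding the bisecting great circles one at a time, which gives the clean product $2nt$ with essentially no genericity assumptions. First I would invoke Proposition~\ref{P:gapsphere1} to place the $n-1$ points $Q_1,\dots,Q_{n-1}$ on a circle $C\subset S^2$ so that they determine exactly $t$ distinct bisecting great circles; this is possible precisely for $n-1\le t\le\binom{n-1}{2}$, since for concyclic points the number of orderings is twice the number of distinct bisectors. Taking the sphere centered at the origin, the bisector of a pair $Q_iQ_j$ is the great circle $\{X:X\cdot(Q_i-Q_j)=0\}$, and because $Q_i,Q_j$ lie in the plane of $C$ (with normal $\hat n$) we have $(Q_i-Q_j)\cdot\hat n=0$. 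Hence each of these $t$ ``$A$-circles'' contains the axis direction $\hat n$, so all of them pass through the two poles of $C$ where the axis meets the sphere, and together they carve $S^2$ into exactly $2t$ lunes.

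Next I would drop in the extra point $P\notin C$ and study its $n-1$ new bisectors $b_k$, each the great circle of points equidistant from $P$ and $Q_k$. Since $C$ is exactly the set of points at the fixed distance $d(\text{pole},Q_k)$ from the poles and $P\notin C$, no $b_k$ is equidistant from $P$ and $Q_k$ at a pole, so no $b_k$ passes through a pole. The heart of the argument is the claim that, as the $b_k$ are added successively to the arrangement of $A$-circles, each $b_k$ meets the already-drawn circles in exactly $2t$ points and therefore creates exactly $2t$ new regions. On the one hand, $b_k$ crosses each of the $t$ $A$-circles in two points, and these $2t$ points are distinct because two distinct $A$-circles meet only at the two poles, which $b_k$ avoids. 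On the other hand, an intersection of $b_k$ with a previously placed $b_i$ occurs at the spherical circumcenter of $\{P,Q_k,Q_i\}$ (and its antipode); each such point is equidistant from $Q_k$ and $Q_i$, hence lies on the bisector of $Q_kQ_i$, which is one of the $A$-circles. Thus every $b_k$--$b_i$ crossing is already among the $2t$ points and contributes nothing new.

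Granting the claim, the count is immediate: the $A$-circles alone give $2t$ regions, and each of the $n-1$ circles $b_k$ adds exactly $2t$, for a total of $2t+(n-1)\cdot 2t=2nt$ regions, i.e.\ $2nt$ orderings.

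The main work is in the key claim, and in particular in checking that no unexpected coincidences change the count. The crucial point is that the fact ``each $b_k$--$b_i$ crossing lies on a concyclic bisector'' is exact, not merely generic, since the spherical circumcenter of $\{P,Q_k,Q_i\}$ is by definition equidistant from $Q_k$ and $Q_i$; likewise the bound ``$b_k$ avoids the poles'' follows directly from $P\notin C$. I would still record the routine observations that the bisectors of the $Q$'s really are $t$ distinct great circles producing $2t$ lunes, and that each $b_k$ differs from every $A$-circle, so the arrangement has the stated incidence structure. One could instead perform the same bookkeeping in the plane, separating bounded from unbounded regions and invoking Theorem~\ref{T:sphere}, but the incremental great-circle count on $S^2$ is cleaner because it never needs to distinguish the two region types.
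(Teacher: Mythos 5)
Your proposal is correct and takes essentially the same route as the paper: the identical construction ($n-1$ concyclic points realizing $2t$ lunes via the concyclic gap-filling result, plus one point off the circle whose $n-1$ bisectors avoid the poles), with the count $2t+(n-1)\cdot 2t=2nt$. If anything, your version is more complete than the paper's: the paper asserts that each lune is divided into exactly $n$ pieces by the new circles without checking that those circles never cross in a lune's interior, whereas your circumcenter observation (any point of $b_k\cap b_i$ is equidistant from $Q_k$ and $Q_i$, hence lies on the bisecting great circle of $Q_kQ_i$, one of the original $t$ circles) supplies precisely that missing justification.
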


\begin{proof}
Let $t$ be chosen with $(n-1) \leq t \leq \binom{n-1}{2}.$ By Lemma~\ref{L:circ}, we can find a circular arrangement of $n-1$  points which divide the surface of the sphere into $2t$ regions. Starting with one of these arrangements, the addition of an $n$th point that is not concyclic with the others generates $n-1$ new, distinct, bisecting great circles, none of which contain the center of the circle formed by the concyclic points (i.e., one of the two intersection points of all great circles generated strictly by the $n-1$ concyclic points). Then each of the $n-1$ new great circles passes through each of the $2t$ existing regions created by the bisecting great circles of the $n-1$ concyclic points. This means that each existing region will be divided into $n$ new regions,  yielding a total of $2nt$ regions.

\end{proof}

We remark that embedding other planar configurations that achieve intermediate values (in the manner of  Props.~\ref{P:cir} and \ref{P:par}, for instance)  will produce other gap-filling results on the sphere.

\subsection{Configurations with antipodal pairs}

Our main results treating spherical configurations thus far have been obtained by embedding planar configurations onto a hemisphere of a sphere. But this does not address all possible configurations. For example, the six vertices of a regular octahedron generate nine reflection planes, and these planes divide the octahedron into 48 triangular regions. The regions, which  correspond to the 48 elements of the symmetry group of the octahedron (the Coxeter group  $ S_4\times \mathbb{Z}_2$) are identical to the regions generated by the perpendicular bisectors for pairs of vertices.  Hence, this arrangement of six points generates 48 orderings of those points, with each ordering corresponding to one of the regions (also called \textit{cells}) in the hyperplane arrangement. See Fig.~\ref{F:oct} for the spherical regions determined by the octahedron, and see Example~\ref{E:plato} for an investigation of all the Platonic solids

\begin{figure}[htb] 
   \centering
\includegraphics[width=3in]{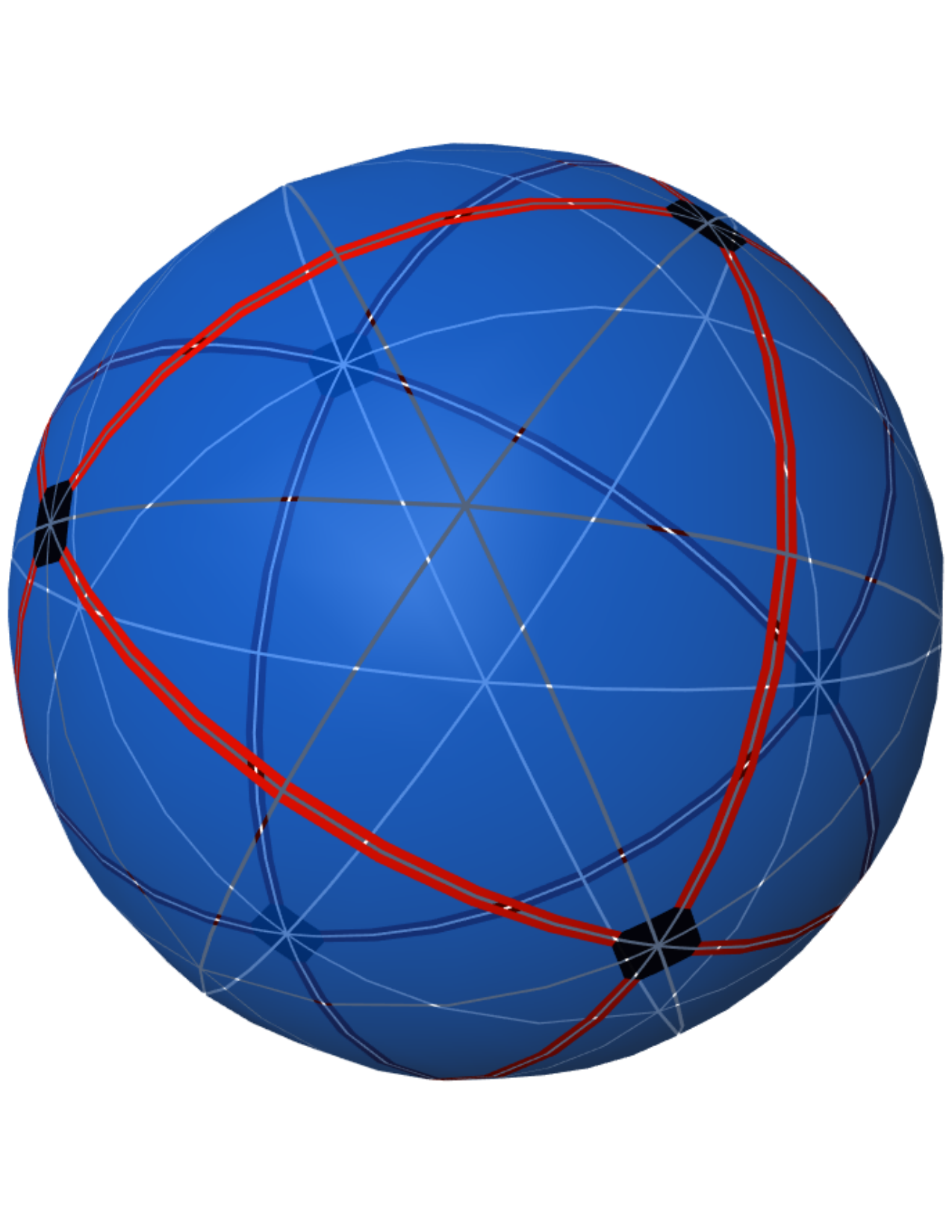} 
 \caption{A regular octahedron is an antiprism with a triangular base. The formula from Thm.~\ref{T:double} gives 48 regions, and each region corresponds to an element of the Coxeter group $B_3\cong S_4 \ltimes \mathbb{Z}_2.$ See Example \ref{E:plato} for more information about the  Platonic solids.}
\label{F:oct}
\end{figure}

%

To begin to scratch the surface of the possibilities involving antipodal pairs, we present one final result in this section. This result concerns ``doubled'' maximal configurations of the form $S\cup (-S),$ where $-S$ is the image of $S$ under the antipodal map (also called \textit{central inversion}) in $S^2.$ For points $P$ and $Q$ on the sphere, we write $c_{P,Q}$ for the perpendicular bisecting great circle determined by $P$ and $Q.$ We also use $P'$ and $Q'$ to denote the antipodes of $P$ and $Q,$ respectively We will need the following lemma, which follows immediately from the fact that the antipodal map fixes all bisecting great circles.

\begin{lem}\label{L:pairs}
Suppose $P$ and $Q$ are points on the surface of a sphere, and let $c_{P,Q}$ be the perpendicular bisecting great circle determined by $P$ and $Q.$ Then $c_{P,Q}=c_{P',Q'}.$
\end{lem}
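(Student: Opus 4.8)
The plan is to prove Lemma~\ref{L:pairs} directly by unwinding the definition of the perpendicular bisecting great circle and exploiting the linearity of the antipodal map. Recall that for points $P$ and $Q$ on the sphere, the bisecting great circle $c_{P,Q}$ is the intersection of the sphere with the plane through the center that is the perpendicular bisector of the chord $\overline{PQ}$; equivalently, a point $X$ on the sphere lies on $c_{P,Q}$ precisely when the geodesic distances satisfy $d(X,P)=d(X,Q)$. The antipodal map $A$ sends each point $R$ to its antipode $R'=-R$ (viewing the sphere as centered at the origin in $\R^3$), so $P'=-P$ and $Q'=-Q$.

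The key observation I would use is that the bisecting plane for $P$ and $Q$ has normal direction $P-Q$ and passes through the origin (since $|P|=|Q|$ for points on a sphere centered at the origin). The bisecting plane for $P'=-P$ and $Q'=-Q$ has normal direction $P'-Q'=-(P-Q)$, which is parallel to $P-Q$, and it likewise passes through the origin. Two central planes with parallel normal vectors are identical, so they cut the sphere in the same great circle; hence $c_{P,Q}=c_{P',Q'}$. This is the whole content of the lemma, and it is genuinely immediate.

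Alternatively, and perhaps more in the spirit of the surrounding exposition, I would argue via the distance characterization: since the antipodal map is an isometry of the sphere, $d(X',P)=d(X,P')$ and similarly for $Q$, so if $X$ satisfies $d(X,P)=d(X,Q)$ then applying the antipodal map and using $A(A(R))=R$ shows the equidistance condition defining membership is preserved under the identification $P\mapsto P'$, $Q\mapsto Q'$. I would present whichever of these is cleanest, likely the normal-vector version, since it makes the ``antipodal map fixes all bisecting great circles'' remark in the text transparent.

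There is no real obstacle here: the statement is essentially a one-line consequence of the fact that $P-Q$ and $P'-Q'$ are antiparallel and both bisecting planes are central. The only thing to be careful about is stating precisely why a great circle is determined by its (central) plane and why parallel normals through the origin give the same plane; I would make sure to note explicitly that the planes pass through the center, since that is what forces equality rather than mere parallelism. Given how short the argument is, I expect the author's proof to simply cite the parenthetical remark already made in the text (that the antipodal map fixes all bisecting great circles) and leave it at that.
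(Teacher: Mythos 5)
Your proposal is correct and takes essentially the same route as the paper: the paper supplies no separate proof body at all, simply asserting that the lemma ``follows immediately from the fact that the antipodal map fixes all bisecting great circles,'' which is precisely the observation your central-plane/normal-vector computation makes rigorous (and which your alternative isometry argument restates). You even predicted this correctly, so nothing further is needed.
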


\begin{thm} \label{T:double}
Let $S\subset S^2$ be a collection of $n$ points contained in some hemisphere of $S^2$ that produce the maximum number of regions on a sphere, and let $T=S\cup (-S)$ be the doubled configuration of $2n$ points (with $n \geq 3$). Then the number of regions formed is $$\frac{1}{3} \left(3 n^4-4 n^3+n+6\right).$$ 
\end{thm}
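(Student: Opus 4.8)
The plan is to realize the bisecting great circles as planes through the origin in $\mathbb{R}^3$, read off their concurrences from the linear algebra of the normal vectors, and then finish with Euler's formula $v-e+f=2$ on the sphere (as in Theorems~\ref{SphereMax} and \ref{T:sphere}). Writing each $P_i$ as a unit vector, the bisecting great circle of two points is the great circle whose plane has normal equal to the difference of the two vectors. For $T=S\cup(-S)$ the bisector normals then fall into three families: the \emph{difference} normals $P_i-P_j$ (these are exactly the bisectors of $S$, and by Lemma~\ref{L:pairs} they coincide with those of $-S$), the \emph{sum} normals $P_i+P_j$ (the cross bisectors of $P_i$ with $-P_j$, which by Lemma~\ref{L:pairs} coincide in the pairs $\{P_i,-P_j\}$ and $\{P_j,-P_i\}$), and the $n$ \emph{polar} normals $P_i$ (bisectors of the antipodal pairs $P_i,-P_i$). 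A short count shows these give exactly $n^2$ distinct great circles, namely $\binom{n}{2}$ difference circles, $\binom{n}{2}$ sum circles, and $n$ polar circles.

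The key geometric translation is that three great circles are concurrent (meet at a common antipodal pair of points) precisely when their three normal vectors are linearly dependent. I would first record the two forced families of concurrences. For each pair $\{i,j\}$, the four normals $P_i,\ P_j,\ P_i+P_j,\ P_i-P_j$ all lie in the $2$-plane $\mathrm{span}(P_i,P_j)$, so the four corresponding great circles all pass through the antipodal pair $\pm(P_i\times P_j)$; this yields $\binom{n}{2}$ antipodal \emph{nodes} of multiplicity $4$. For each triple $\{i,j,k\}$, writing the three chosen two-index normals as $P_i+\delta_1P_j,\ P_j+\delta_2P_k,\ P_i+\delta_3P_k$, a one-line computation shows the triple is dependent iff $\delta_3=-\delta_1\delta_2$, giving exactly four concurrent triples per $\{i,j,k\}$ (one of which is the classical circumcenter concurrence $\{P_i-P_j,\,P_j-P_k,\,P_i-P_k\}$ of the three difference circles); this yields $4\binom{n}{3}$ antipodal nodes of multiplicity $3$.

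With the multiplicities in hand, I would count using antipodal symmetry, so that each node corresponds to two genuine vertices of equal multiplicity. The incidence identity $\binom{n^2}{2}=\sum_{\text{nodes}}\binom{k}{2}$ recovers the number of ordinary (multiplicity-$2$) nodes once the two special families are subtracted; then $v$ is twice the total number of nodes, while $e=\sum_v k_v$ since a vertex lying on $k$ circles has degree $2k$. Feeding these into $f=2-v+e$ and expanding the binomial coefficients should collapse to $n^4-\tfrac43 n^3+\tfrac13 n+2=\tfrac13\!\left(3n^4-4n^3+n+6\right)$, as claimed; the case $n=3$ reproduces the $48$ regions of the octahedron observed above.

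The main obstacle is the classification step: proving that the dependencies listed are the \emph{only} ones and that each node has exactly the stated multiplicity. Concretely, I must verify that for a maximal (hence sufficiently generic) $S$ no fourth normal lies in the $2$-plane spanned by a dependent triple, that the pair-nodes and triple-nodes are pairwise distinct, and that no further accidental coplanarities occur among the $n^2$ normals. This reduces to checking, for vectors of the form $\pm P_i\pm P_j$ and $P_i$ with the $P_i$ in general position (no three on a common great circle), that the only coefficient relations are those exhibited — a finite case analysis on the index patterns involving at most three of the $P_i$ — after which the Euler computation is routine.
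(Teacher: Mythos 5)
Your proposal is correct, and it reaches the paper's formula by a genuinely different mechanism. The paper works entirely on the configuration side: it classifies the vertices of the arrangement by degree ($4$, $6$, $8$), counts each class by a case analysis over $3$- and $4$-element subsets of $S\cup(-S)$ (using Lemma~\ref{L:pairs} to merge coincident bisectors and avoid double counting), and then applies Euler in the form $R=2+v_4+2v_6+3v_8$. You instead dualize to normal vectors, where the $n^2$ circles become $P_i-P_j$, $P_i+P_j$, and $P_i$, and concurrence becomes linear dependence; your two forced families reproduce exactly the paper's high-degree vertices ($v_8=2\binom{n}{2}$ from the pair nodes, $v_6=8\binom{n}{3}$ from the $4\binom{n}{3}$ dependent sign-patterns per triple, including the circumcenter triple). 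The real gain is the incidence identity $\binom{n^2}{2}=\sum_{\text{nodes}}\binom{k}{2}$: it delivers the ordinary crossings for free, giving $v_4=2N_2=n^2(n^2-1)-6n(n-1)-4n(n-1)(n-2)=n(n-1)^2(n-2)$, which indeed equals the paper's $24\binom{n}{4}+12\binom{n}{3}$ — so you bypass the longest and most error-prone part of the paper's proof (its multi-case enumeration of degree-$4$ vertices). Feeding $v=2N_2+2\binom{n}{2}+8\binom{n}{3}$ and $e=4N_2+8\binom{n}{2}+24\binom{n}{3}$ into $f=2-v+e$ does collapse to $\frac13\left(3n^4-4n^3+n+6\right)$, as you claim. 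What the paper's route buys in exchange is explicitness: every vertex is tied to a concrete subset of points, with no appeal to linear algebra. As for the "main obstacle" you flag — that no accidental coincidences occur beyond the forced ones — note that the paper's proof makes exactly the same implicit assumption (it is packaged into the hypothesis that $S$ produces the maximum number of regions), so leaving this as a genericity check sketched but not fully discharged puts you at the same level of rigor as the published argument, and your formulation via dependencies among vectors $\pm P_i\pm P_j$, $P_i$ actually localizes the needed verification more precisely than the paper does.
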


\begin{proof} To derive the formula, we will first count the number of vertices of degree 4, 6, and 8 (the only degrees possible for this configuration) in the (planar) graph formed by the intersections of all the perpendicular bisecting great circles on the sphere. We will then use Euler's formula to compute the number of regions formed by this central hyperplane arrangement.

Let $v_4, v_6,$ and $v_8$ be the number of vertices of degrees 4, 6, and 8. Then it immediately follows from Euler that the number of regions is $$R=2+v_4+2v_6+3v_8.$$ We will now compute $v_4, v_6,$ and $v_8.$ 

\begin{enumerate}
\item Degree 4 vertices: A vertex of degree 4 can only arise when we choose four distinct points from $T.$ We consider several ways this can happen.
\begin{enumerate}
\item All four points are in $S.$ Label the four points  $P_1, P_2, P_3, P_4$ and note that there are three distinct parings of the four points, each of which produces two antipodal vertices of degree 4: $c_{1,2} \cap c_{3,4}, c_{1,3}\cap c_{2,4},$ and $c_{1,4}\cap c_{2,3}.$ (We write $c_{i,j}$ instead of the more cumbersome $c_{P_i,P_j}$ throughout this proof.)

This produces a total of $6{n \choose 4}$ vertices of degree 4. Further, Lemma~\ref{L:pairs} ensures that the bisecting great circles produced from pairs of points in $-S$ coincide with the corresponding bisecting great circles produced by choosing pairs of points from $S.$  Thus, this case also accounts for choosing four points from $-S.$

\item Three points are in $S$ and one is in $-S.$ There are two cases to consider depending on whether the point chosen from $-S$ is the antipode of one of the points chosen from $S.$ (We also remark that subsets of four points where three are chosen from $-S$ and one is chosen from $S$ are also counted in this case, by Lemma~\ref{L:pairs}.)

\textbf{Case 1}: No two points from our set of four points are antipodal. Write $P_1,P_2,P_3,P_4'$ for the four points. There are ${n \choose 4}$ ways to select the four indices, and four ways to choose the index of the point in $-S.$ For a given subset of indices, this gives 12 pairings, each of which produces two antipodal vertices of degree 4, but these can be paired off using Lemma~\ref{L:pairs}. For instance, the pairing  $c_{1,2}\cap c_{3,4'}$ coincides with the pairing $c_{1,2}\cap c_{3',4}$. 

Then this case gives a total of $12{n \choose 4}$ vertices of degree 4.
\medskip

\textbf{Case 2}: The point chosen from $-S$ is antipodal to one of the points chosen from $S.$ This time, we write $P_1,P_2,P_3,P_1'$ for the four points, where $P_1'$ is antipodal to $P_1.$ There are ${n \choose 3}$ ways to choose the indices and three ways to pick the antipode, but several of these 12 pairings do not produce new vertices of degree 4. For example, the pairing of the form $c_{1,2}\cap c_{1',3}$ is identical to the pairing $c_{1,2}\cap c_{1,3'}$ (by Lemma~\ref{L:pairs}), which corresponds to a vertex of degree 6.

The only pairings that produce vertices of degree 4 are $c_{1,1'} \cap c_{2,3}, c_{2,2'} \cap c_{1,3}$, and $c_{3,3'} \cap c_{1,2}.$ We conclude that there are  $6{n \choose 3}$ vertices of degree 4 in this case.

%
%
%

\item Two points are chosen from $S$ and two from $-S.$ This time, we separate into three cases.

\textbf{Case 1}: The two points from $-S$ are the antipodes of the two points from $S.$ We write $P_1,P_2,P_1',P_2'$ for the four points and note that these four points form a rectangle. This produces two antipodal vertices of degree 8 (counted below), but no vertices of degree 4. 

\textbf{Case 2}: One point from $-S$ is antipodal to one point in $S.$ There are ${n \choose 3}$ ways to choose the indices and three ways to select the point whose antipode appears. (Lemma~\ref{L:pairs} tells us the sets $\{P_1,P_1',P_2,P_3'\}$ and $\{P_1,P_1',P_2',P_3\}$ will produce identical bisecting great circles.) Then the count is completely analogous to case 2 above, where we considered sets of the form $\{P_1,P_2,P_3,P_1'\}$. This case yields another $6{n \choose 3}$ vertices of degree 4.

\textbf{Case 3}: No antipodal pairs are chosen. Then we write $P_1,P_2,P_3',P_4'$ for the four points. The reader can check that the only pairings that produce new vertices of degree 4 are the following:

$$c_{1,2'}\cap c_{3,4'} \hskip.5in c_{1,3'}\cap c_{2,4'} \hskip.5in c_{1,4'}\cap c_{2,3'} $$

All other pairings produced will coincide with pairings produced in the first case (where all four points were selected from $S$) or with one of these three pairings. Then this case produces $6{n\choose 4}$ vertices of degree 4.

\end{enumerate}
\item Vertices of degree 6: This time, we select three points from $S\cup (-S).$ There are two cases.

\textbf{Case 1}: The three points are in $S.$ Then there are ${n\choose 3}$ ways to select the points, and each selection produces two antipodal vertices of degree 6. 

\medskip

\textbf{Case 2}: Two of the points are in $S,$ and one is in $-S.$ In this case, we have ${n \choose 3}$ ways to select the points and three ways to choose the index corresponding to the point in $-S.$ This gives $6{n\choose 3}$ vertices of degree 6.

\item Vertices of degree 8: Vertices of degree 8 are only produced by collections of four points of the form $P,Q,P',Q',$ where $P,Q \in S.$ Each such set gives two antipodal vertices of degree 8.

\end{enumerate}

Putting all of this together gives 
\begin{eqnarray*}
v_4&=&24{n \choose 4}+12{n \choose 3}. \\
v_6&=&8{n\choose 3} \\
v_8&=& 2{n\choose 2}
\end{eqnarray*}

Then the number of regions is $2+v_4+2v_6+3v_8=\frac{1}{3} \left(3 n^4-4 n^3+n+6\right).$

\end{proof}

We can check the formula given in Theorem~\ref{T:double} with a regular octahedron. In this case, we set $S$ to be an equilateral triangle of suitable size so that $S\cup(-S)$ forms the vertices of a regular octahedron. Then, by the theorem, we can determine the number of regions generated by evaluating the above formula at $n=3.$ This gives a total of 48 regions, which agrees with the size of the Coxeter group associated with the octahedron. See Fig.~\ref{F:oct}. In fact, we can say a little more in the case when $|S|=3.$ By Theorem~\ref{T:double}, the number of regions of any doubled triangle is 48, so the octahedron formed by a scalene triangle living in one hemisphere of $S^2$ gives the same number of regions as the regular octahedron. 

\begin{ex}\label{E:plato}  Platonic solids. It is an  interesting exercise to determine the number of regions the bisecting great circles determine for point configurations  that correspond to the vertices of the Platonic solids. In general, the planes of reflection for  these solids divide the solids into cells that are in one-to-one correspondence with the elements of the corresponding Coxeter symmetry group. Each reflection plane for a solid will be a bisecting great circle for some pair of vertices of the solid. 

But the converse is not true, in general. For the tetrahedron and octahedron, every  bisecting great circle arises from a reflection plane for the solid. However, this is false for the other three Platonic solids. For the cube, icosahedron, and dodecahedron, choosing two antipodal points will generate a bisecting great circle that is not a reflection plane for the solid. These ``additional'' great circles will double the number of regions in these cases, compared with the number of elements of the corresponding Coxeter  group. 

In Fig.~\ref{F:plato}, for each solid,  we show how each face is decomposed into regions by the bisecting great circles. Fig.~\ref{F:plato2} shows the icosahedron and the dodecahedron embedded on spheres, with the mirror lines of symmetry. The region counts are given in Table~\ref{Ta:plato}. We also point out that there are precisely $n^2$ distinct perpendicular bisecting great circles for the doubled configurations of Theorem~\ref{T:double}. For the octahedron, these correspond to the nine reflection planes of symmetry. 

\begin{table}[htp]
\caption{For $n=4, 6, 8, 12,$ or 20 points on a sphere, we list the minimum and maximum number of regions possible, along with the number of regions the bisecting great circles of pairs of vertices determine for each Platonic solid. The number of regions is equal to the size of the corresponding symmetry group for the tetrahedron and the octahedron, and is twice the size of the symmetry group for the other Platonic solids.}
\begin{center}
\begin{tabular}{|c|c|lc|c|} \hline
$n$ & Min & Platonic solid & \#& Max \\ \hline
4 & 8 & Tetrahedron & 24 & 24 \\ \hline
6 & 12 & Octahedron &  48 &172 \\ \hline
8 & 16 & Cube &96& 646 \\ \hline
12 & 24 & Icosahedron &240& 3852 \\ \hline
20 & 40 &Dodecahedron & 240& 33632 \\ \hline
\end{tabular}
\end{center}
\label{Ta:plato}
\end{table}%

\begin{figure}[htb] 
   \centering
\includegraphics[width=1.5in]{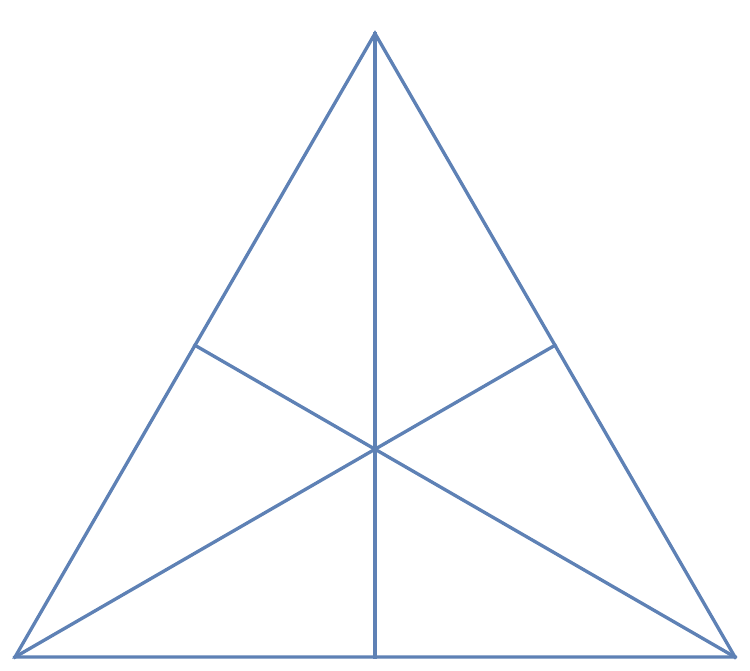} \hskip.1in
\includegraphics[width=1.5in]{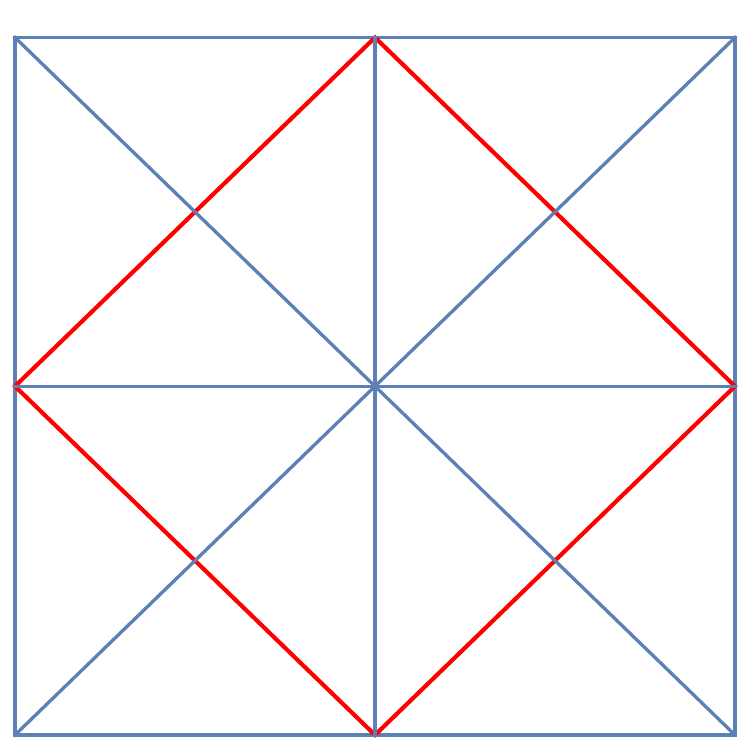} \hskip.1in
\includegraphics[width=1.5in]{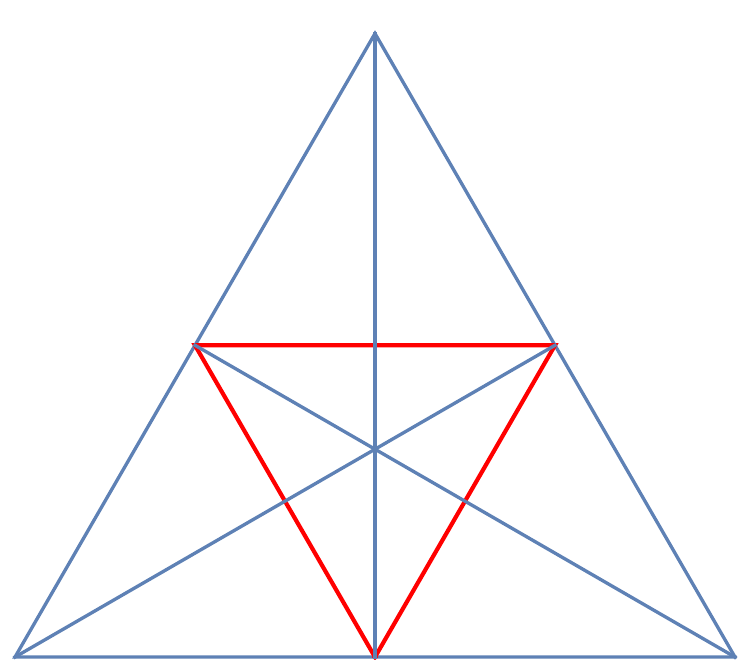} \hskip.1in
\includegraphics[width=1.5in]{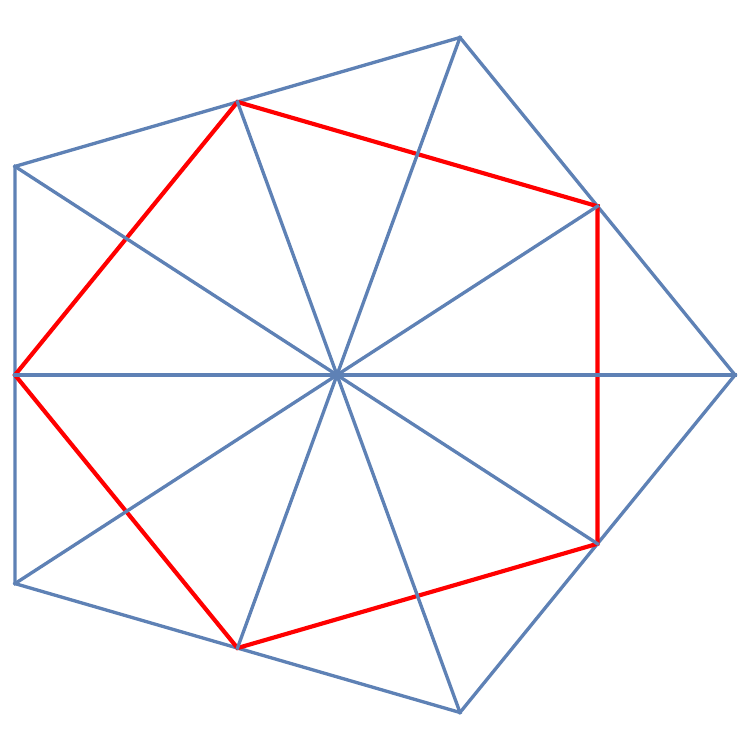} 

 \caption{From left to right: One face of the tetrahedron (or octahedron), the cube, the icosahedron, and the dodecahedron, divided into regions by the bisecting great circles. The red segments correspond to great circles determined by  pairs of antipodal points. For the cube, icosahedron, and dodecahedron, these do not  correspond to any reflections for that solid. Antipodal bisecting great circles do not appear for the tetrahedron. For the octahedron, bisecting great circles arising from pairs of antipodal vertices do correspond to reflections for the octahedron.}

\label{F:plato}
\end{figure}

\begin{figure}[htb] 
   \centering
\includegraphics[width=2.5in]{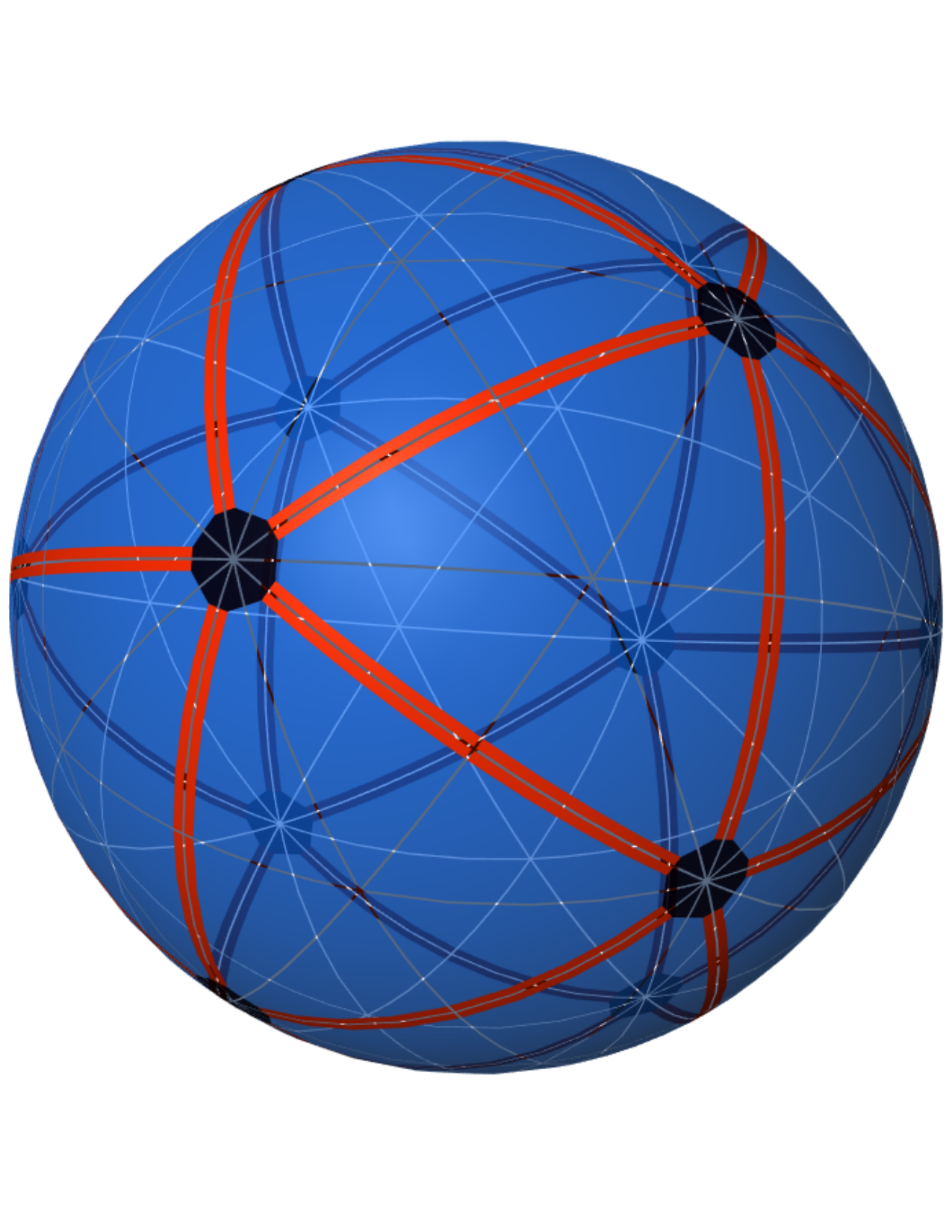} \hskip1in
\includegraphics[width=2.5in]{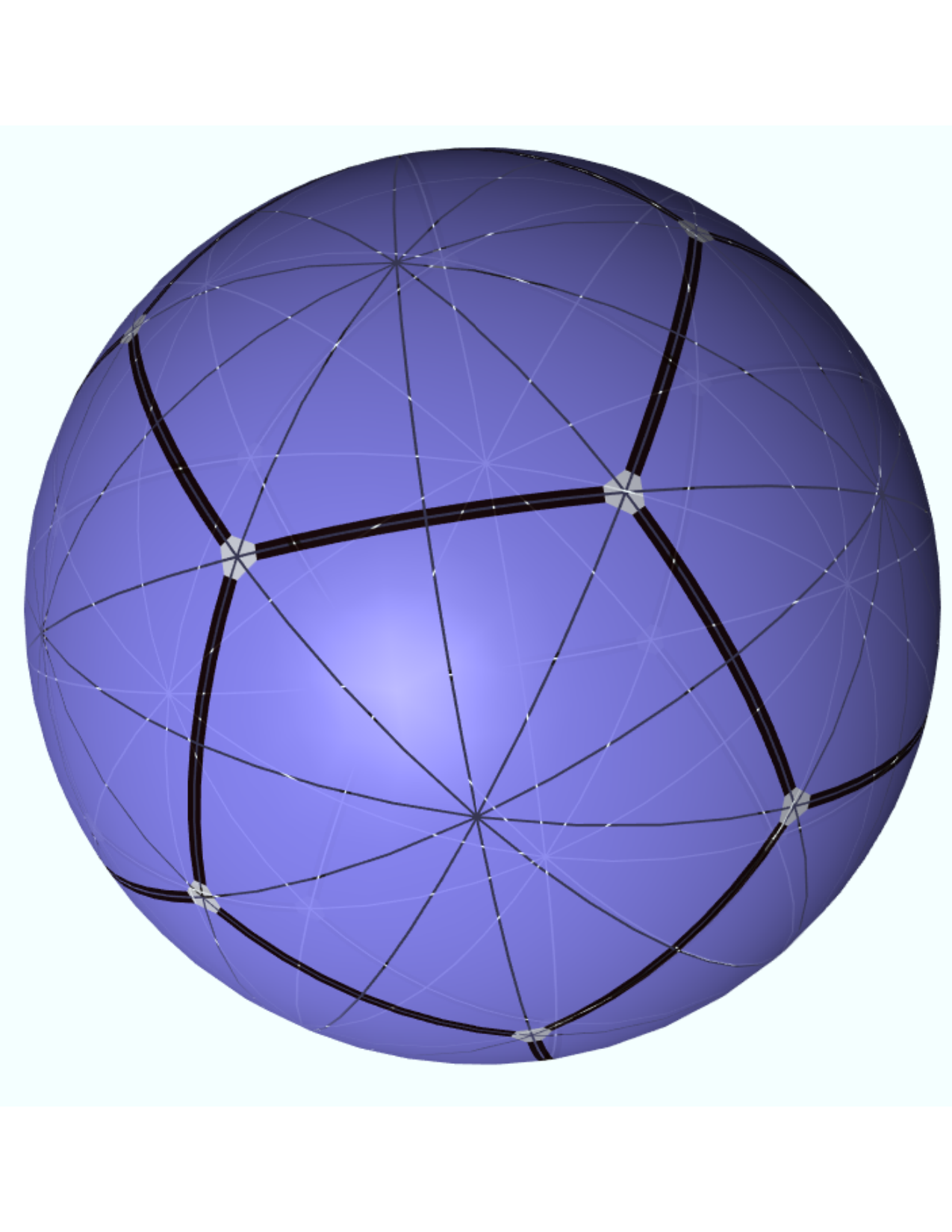} 
 \caption{The reflection planes for the icosahedron and dodecahedron coincide, and divide the sphere into 120 regions. The number of regions determined by the bisecting great circles of all pairs of points is 240.}

\label{F:plato2}
\end{figure}

\end{ex}

Finally, we remark that a doubled free point set on a sphere produces approximately one-fourth of the maximum number of regions $2n$ points can produce on a sphere. This follows from dividing the formula given in Theorem~\ref{T:double} by the maximum (for $2n$ points) given in Theorem~\ref{SphereMax}. 

\section{Generalizations}\label{S:gen}
Recall that voter preference lists were the original motivation for the discrete geometry problems considered here. In this section, we explore two generalizations: an application to \textit{weighted preferences} where a voter assigns real numbers as weights to issues to reflect their relative importance to that voter, and a version of the planar point problem where the ordering is determined by the average distance from a \textit{pair} of vantage points. We begin with the weighted version of our problem, and remark that all of our configurations will be linear or  planar in this section.

\subsection{Weighted preference lists}\label{SS:weight}
If a voter cares more about one issue than  another, it is easy to modify our approach to produce a preference list, as before. For example, if the voter cares twice as much about the issue represented on the $x$-axis than the issue represented on the $y$-axis, then a hypothetical voter situated at the origin will prefer a candidate at $(1,2)$ over a candidate at $(2,1),$ for instance. 

We are interested in how the weighted preferences determine an ordering of the candidates associated with our vantage point. A straightforward approach to this problem is to simply replace each  point $(x_k,y_k)$ in our set $S$ with $(w_xx_k,w_yy_k),$ where $w_x$ and $w_y$ are positive reals corresponding to the relative weights of the two issues. This dilatation will transform the arrangement of perpendicular bisectors, but will not change the number of regions. See Fig.~\ref{F:stretch}. In Prop.~\ref{P:weight1}, we show that this operation has the effect of replacing the standard Euclidean distance formula with a weighted version: $$d_w((a,b),(x_k,y_k))=\sqrt{w_{x}^2(x_{k}-a)^2+w_{y}^2(y_{k}-b)^2}. $$

\begin{figure}[htb] 
   \centering
\includegraphics[width=2.5in]{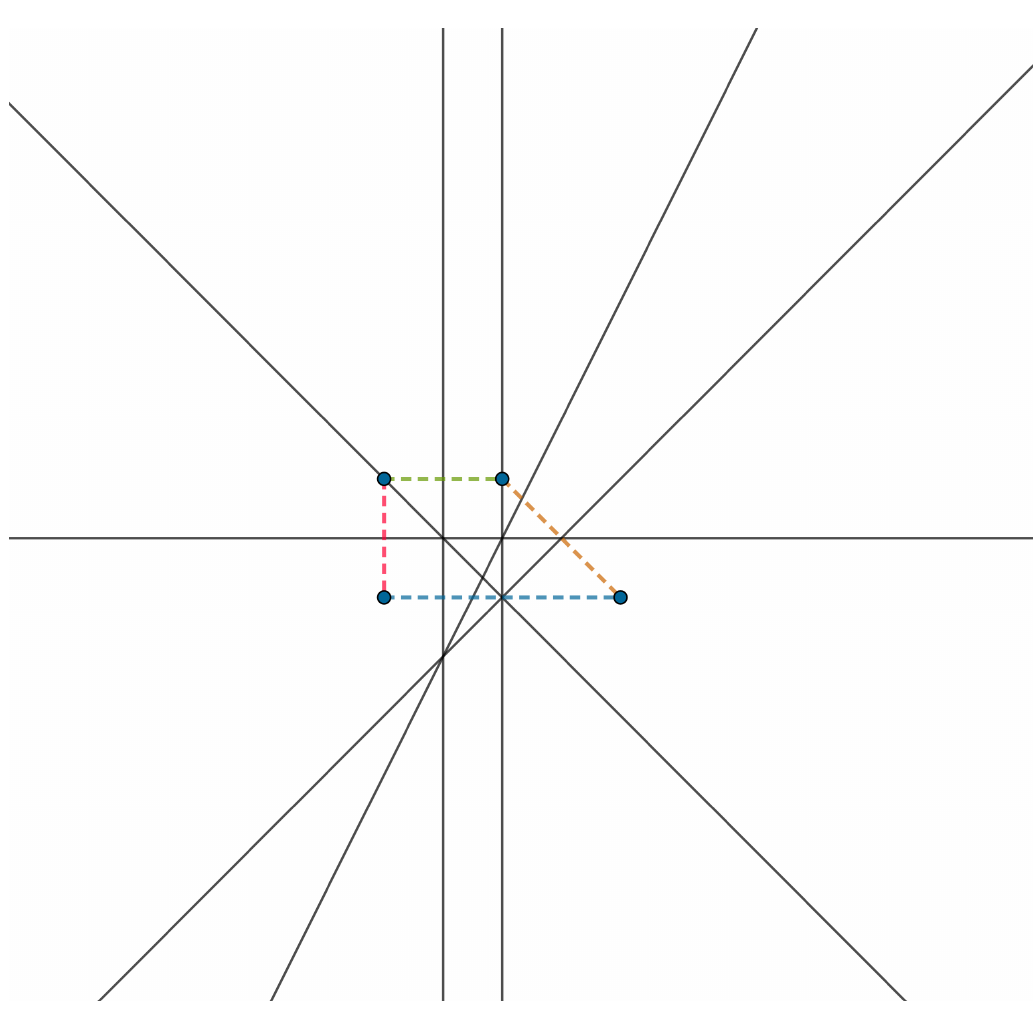} \hskip1in
 \includegraphics[width=2.5in]{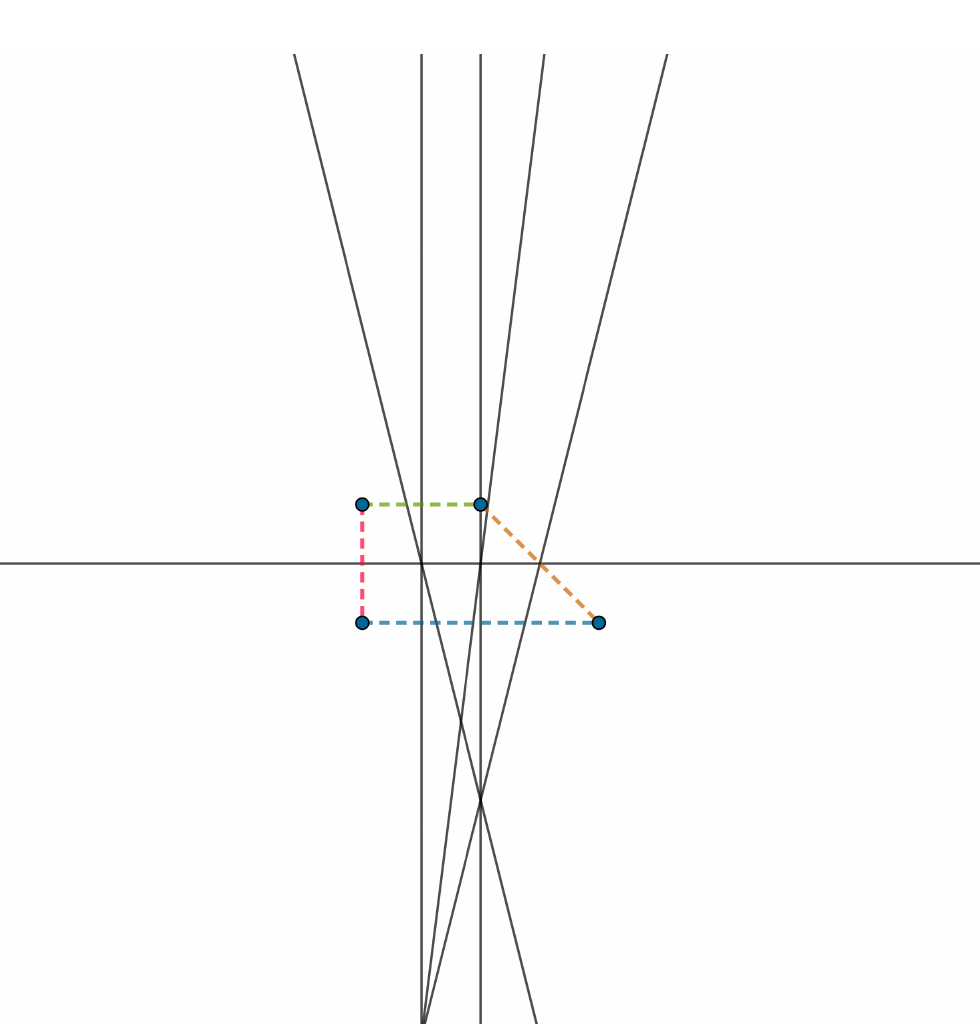} 
 \caption{Left: Unweighted configuration. Right: The same configuration, but with $w_x=2$ and $w_y=1.$ The equation for the lines in the weighted picture on the right are given in Prop.~\ref{P:weight2}. }
\label{F:stretch}
\end{figure}

\begin{prop}\label{P:weight1}
Let $P_1=(x_1,y_1), P_2=(x_2,y_2),$ and $V=(a,b)$ be three points in the plane, and assume the respective axes have weights $w_x$ and $w_y.$ Let $d_w((a,b),(x_k,y_k))=\sqrt{w_{x}^2(x_{k}-a)^2+w_{y}^2(y_{k}-b)^2} $ and write $\overline{P_k}=(w_xx_k,w_yy_k).$ Then $d_w(V,P_1)<d_w(V,P_2)$ if and only if $d(V,\overline{P_1})<d(V,\overline{P_2}).$ 
\end{prop}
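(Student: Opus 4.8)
The plan is to prove the biconditional by reducing it to a single sign comparison. Since $d_w$ and the Euclidean distance $d$ are both nonnegative, squaring is monotone, so $d_w(V,P_1)<d_w(V,P_2)$ is equivalent to $d_w(V,P_1)^2<d_w(V,P_2)^2$, and similarly $d(V,\overline{P_1})<d(V,\overline{P_2})$ is equivalent to $d(V,\overline{P_1})^2<d(V,\overline{P_2})^2$. Setting
$$D_w=d_w(V,P_2)^2-d_w(V,P_1)^2,\qquad D=d(V,\overline{P_2})^2-d(V,\overline{P_1})^2,$$
the entire proposition collapses to showing that $D_w$ and $D$ always carry the same sign (with the borderline equality cases also matching). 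Everything else is bookkeeping, so the whole content lives in this one comparison.

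The structural idea I would lean on is that $T(x,y)=(w_xx,w_yy)$ is a diagonal, invertible, orientation-preserving linear map with $\overline{P_k}=T(P_k)$, and that the weighted squared distance $w_x^2(x_k-a)^2+w_y^2(y_k-b)^2$ is the exact $T$-image of the ordinary squared distance $(x_k-a)^2+(y_k-b)^2$. Concretely, I would expand each of $D_w$ and $D$ as a difference of two sums of squares; in both cases the purely quadratic terms in the point coordinates telescope, leaving an expression that is \emph{linear} in the vantage coordinates $a,b$ (together with the squared point coordinates). The goal is then to check that, after telescoping, $D_w$ and $D$ are positive scalar multiples of one another, from which the same-sign property — and hence the biconditional — is immediate.

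The step I expect to be the main obstacle is exactly the coefficient-matching in those linear-in-$(a,b)$ terms: one must verify that the powers of the weights $w_x,w_y$ attached to the $a$- and $b$-coefficients on the two sides agree up to a common positive factor, since it is precisely here that the two weights could in principle enter asymmetrically and spoil the equivalence. I would therefore treat this coefficient check as the heart of the argument and carry it out carefully rather than waving it through. Once it is confirmed, no further geometry is needed: the proposition holds pointwise in $V$, and because $T$ is a bijection of the plane the two comparisons sweep out the same family of orderings as the vantage point ranges over $\R^2$, which is what lets the weighted problem be reduced to the unweighted problem for the dilated configuration $\overline{S}$.
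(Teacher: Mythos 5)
Your reduction to comparing the signs of $D_w$ and $D$ is the right instinct, but the coefficient check you correctly single out as ``the heart of the argument'' is never actually carried out in your proposal --- and it fails. Expanding,
$$D_w=w_x^2(x_2^2-x_1^2)-2aw_x^2(x_2-x_1)+w_y^2(y_2^2-y_1^2)-2bw_y^2(y_2-y_1),$$
while
$$D=w_x^2(x_2^2-x_1^2)-2aw_x(x_2-x_1)+w_y^2(y_2^2-y_1^2)-2bw_y(y_2-y_1).$$
The quadratic terms agree, which forces any scalar $\lambda$ with $D_w=\lambda D$ to equal $1$; but the terms linear in $a,b$ carry $w_x^2,w_y^2$ on one side and only $w_x,w_y$ on the other, so $D_w\neq D$ unless $w_x=w_y=1$. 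Worse, the two can have opposite signs: take $w_x=2,$ $w_y=1,$ $V=(1,0),$ $P_1=(1,0),$ $P_2=(3/5,0).$ Then $d_w(V,P_1)=0<4/5=d_w(V,P_2),$ yet $\overline{P_1}=(2,0)$ and $\overline{P_2}=(6/5,0)$ give $d(V,\overline{P_1})=1>1/5=d(V,\overline{P_2}).$ So the proposition as literally stated --- with the vantage point $V$ left untransformed --- is false, and no amount of care at the step you flagged can rescue the proof as planned.

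The repair, and what the paper's displayed computation actually establishes (its concluding prose contains the same slip you made), is the exact identity $d_w(V,P_k)=d(\overline{V},\overline{P_k})$ where $\overline{V}=(w_xa,w_yb)$: the weighted distance from $V$ to $P_k$ equals the \emph{Euclidean} distance from the \emph{transformed} vantage point to the transformed candidate, since $w_x^2(x_k-a)^2+w_y^2(y_k-b)^2=(w_xx_k-w_xa)^2+(w_yy_k-w_yb)^2.$ Once $V$ is transformed along with the $P_k$, the biconditional is immediate (the quantities under the square roots are literally equal), with no telescoping or coefficient matching needed. Your closing observation then salvages the application in Corollary~\ref{C:weight}: because $T(x,y)=(w_xx,w_yy)$ is a bijection of $\R^2$, letting $V$ range over the plane is the same as letting $\overline{V}$ range over the plane, so the weighted orderings of $S$ coincide with the unweighted orderings of $\overline{S}$ and the region count is unchanged --- but that conclusion requires the corrected statement, not the one you set out to prove.
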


\begin{proof}
Suppose $(a,b)$ is equidistant between $(x_{1}, y_{1})$ and $(x_{2}, y_{2})$ using weighted distance. Then we know 
$$\sqrt{w_{x}^2(x_{1}-a)^2+w_{y}^2(y_{1}-b)^2} = \sqrt{w_{x}^2(x_{2}-a)^2+w_{y}^2(y_{2}-b)^2}. $$
From this equation, we can see that
$$ \sqrt{(w_{x}x_{1}-w_{x}a)^2+(w_{y}y_{1}-w_{y}b)^2}=\sqrt{(w_{x}x_{2}-w_{x}a)^2+(w_{y}y_{2}-w_{y}b)^2},$$ 
which tells us that $(a,b)$ is equidistant from the points $(w_{x}x_{1}, w_{y}y_{1})$ and $(w_{x}x_{2}, w_{y}y_{2})$ using the standard Euclidean distance. 

\end{proof}

\begin{cor}\label{C:weight}
Let $S\subset\R^2$ be a finite set and let $V\in\R^2$ be a vantage point. If the axes have weights $w_x$ and $w_y,$ then we can determine the preference order for $V$ by replacing each $(x_k,y_k)\in S$ with $(w_xx_k,w_yy_k)$ and counting the regions determined using the standard, unweighted distance formula.
\end{cor}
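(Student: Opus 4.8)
The plan is to deduce Corollary~\ref{C:weight} directly from Proposition~\ref{P:weight1}, treating the proposition as the single substantive ingredient. The corollary is essentially the global (set-level) restatement of the pairwise equivalence already established, so the proof is a short bookkeeping argument rather than a new computation.

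First I would fix the vantage point $V=(a,b)$ and recall that the preference order induced by $V$ is entirely determined by the pairwise comparisons: for each pair $P_i,P_j\in S$, the order places $P_i$ before $P_j$ exactly when $d_w(V,P_i)<d_w(V,P_j)$. By Proposition~\ref{P:weight1}, each such weighted comparison agrees with the unweighted comparison between the dilated points $\overline{P_i}=(w_xx_i,w_yy_i)$ and $\overline{P_j}=(w_xx_j,w_yy_j)$, i.e. $d_w(V,P_i)<d_w(V,P_j)$ iff $d(V,\overline{P_i})<d(V,\overline{P_j})$. Since every pairwise comparison coincides, the total order that $V$ induces on $S$ under weighted distance is identical to the order it induces on $\overline{S}=\{\overline{P_k}\}$ under ordinary Euclidean distance. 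Thus computing the weighted preference order for $V$ reduces to computing an unweighted preference order for the transformed set.

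Next I would promote this pointwise statement to a statement about \emph{all} orderings as $V$ ranges over $\R^2$. Because the map $V\mapsto(\text{weighted order of }S)$ and the map $V\mapsto(\text{unweighted order of }\overline{S})$ agree at every $V$, the two set-valued images are literally the same collection of orderings, and in particular are equinumerous. Equivalently, one observes that the weighted perpendicular bisector of $P_i,P_j$ (the locus $d_w(V,P_i)=d_w(V,P_j)$) is exactly the ordinary perpendicular bisector of $\overline{P_i},\overline{P_j}$; hence the weighted bisector arrangement for $S$ is precisely the unweighted bisector arrangement for $\overline{S}$, so they cut $\R^2$ into the same regions. Counting regions of the unweighted arrangement for $\overline{S}$ therefore counts exactly the weighted orderings of $S$, which is the assertion of the corollary.

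I do not expect a genuine obstacle here, since all the analytic content lives in Proposition~\ref{P:weight1}; the only point requiring a word of care is that the dilation $(x,y)\mapsto(w_xx,w_yy)$ with $w_x,w_y>0$ is a bijection, so distinct points of $S$ map to distinct points of $\overline{S}$ and no orderings are spuriously merged or created. With that remark the corollary follows immediately, and I would phrase the writeup as a one-paragraph consequence of the proposition rather than a standalone argument.
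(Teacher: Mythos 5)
Your overall route---deducing the corollary from Proposition~\ref{P:weight1} via pairwise comparisons plus a bijectivity remark---matches the paper's intent (the paper gives no separate proof of the corollary), but your execution contains a genuine error: you keep the vantage point fixed under the dilation. The identity that the proof of Proposition~\ref{P:weight1} actually establishes is $d_w(V,P_k)=d(\overline{V},\overline{P_k})$, where $\overline{V}=(w_xa,w_yb)$ is the \emph{dilated} vantage point; it does not relate $d_w(V,P_k)$ to $d(V,\overline{P_k})$ with $V$ untransformed. With the untransformed $V$ the pairwise equivalence you assert is simply false: take $w_x=2$, $w_y=1$, $V=(1,0)$, $P_1=(1,0)$, $P_2=(0.9,0)$; then $d_w(V,P_1)=0<0.2=d_w(V,P_2)$, while $d(V,\overline{P_1})=1>0.8=d(V,\overline{P_2})$. (The statement of Prop.~\ref{P:weight1} is itself loosely worded on exactly this point, but its displayed computation shows the relevant equidistant point is $(w_xa,w_yb)$, not $(a,b)$.) For the same reason, your claim that the locus $d_w(V,P_i)=d_w(V,P_j)$ \emph{is} the ordinary perpendicular bisector of $\overline{P_i},\overline{P_j}$ fails, and is explicitly contradicted by the paper's remark following Prop.~\ref{P:weight2}: the weighted separating line passes through the midpoint of the original pair $P_i,P_j$ and is in general not perpendicular to anything relevant. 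The correct relationship is that the weighted arrangement for $S$ is the preimage of the standard bisector arrangement for $\overline{S}$ under the dilation $D:(a,b)\mapsto(w_xa,w_yb)$; the two arrangements are linearly equivalent, not equal.

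The repair is short, and it is precisely the step your writeup elides: since $d_w(V,P_k)=d(D(V),\overline{P_k})$, the weighted preference order of $S$ seen from $V$ equals the standard preference order of $\overline{S}$ seen from $D(V)$. Because $D$ is a linear bijection of $\R^2$, as $V$ ranges over the plane so does $D(V)$; hence the collection of weighted orderings of $S$ coincides with the collection of standard orderings of $\overline{S}$, and the latter are exactly the regions of the ordinary perpendicular-bisector arrangement of $\overline{S}$. Your closing bijectivity remark gestures at this, but as written you invoke the bijection only to see that distinct points of $S$ stay distinct; the bijection that actually carries the proof is the one acting on the vantage points.
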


We also note that a similar approach will work in higher dimensions. If a voter assigns weights to $d\geq 2$ different issues and each candidate corresponds to a point $P=(x_1,x_2,\dots,x_d)$ in $\R^d,$ then incorporating the weights simply transforms $P$ to $(w_1x_1,w_2x_2,\dots,w_dx_d).$ Then apply the Euclidean distance formula in $\R^d$ to the transformed coordinates to generate an ordering of the candidates as in the corollary.

By Prop.~\ref{P:weight1}, the procedure given in Cor.~\ref{C:weight} is equivalent to keeping the point set $S$ (and $V$) unchanged, but using the distance function $d_w.$ This is the point of our last result concerning weighted preferences. 

\begin{prop}\label{P:weight2}
Let $S=\{P_1,P_2,\dots,P_n\},$ where $P_i=(x_i,y_i),$ be a collection of $n$ points in the plane, and let $w_{x}$ be the  weight given to the $x$-axis, and $w_{y}$ be the  weight given to the $y$-axis. For each pair of points $P_i$ and $P_j,$ define a line  $l(i,j)$ as follows: 
$$w_x^2(x_{j}-x_{i}) x+w_y^2 (y_{j}-y_{i})y= \frac12\left({w_{x}^2}(x_{j}^2-x_{i}^2)+w_y^2(y_{j}^2-y_{i}^2)\right).$$
 Then the regions created by these lines determine the weighted preference order of the vantage point.
\end{prop}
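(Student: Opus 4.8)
The statement says that the lines $l(i,j)$ govern the weighted ordering, so the real content is to show that $l(i,j)$ is precisely the \emph{weighted perpendicular bisector} of $P_i$ and $P_j$, i.e.\ the locus of vantage points $V=(x,y)$ with $d_w(V,P_i)=d_w(V,P_j)$. Once this is established, the proposition follows from the same region-to-ordering correspondence used throughout Section~\ref{SS:plane}, now applied to the distance function $d_w$ in place of the ordinary Euclidean distance. My plan is therefore (i) to reduce the defining equation of $l(i,j)$ to an equality of weighted distances, (ii) to carry out the single expansion that identifies the two, and (iii) to assemble the global arrangement picture.

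First I would pass from distances to squared distances. Since $d_w(V,P_i)$ and $d_w(V,P_j)$ are nonnegative, factoring $p^2-q^2=(p-q)(p+q)$ shows that the sign of $d_w(V,P_i)-d_w(V,P_j)$ agrees with the sign of $d_w(V,P_i)^2-d_w(V,P_j)^2$ wherever $V$ is not equal to both $P_i$ and $P_j$; in particular equality of the distances is equivalent to equality of their squares, so squaring introduces no spurious solutions. Thus it suffices to analyze the affine function
\[
D(V):=d_w(V,P_i)^2-d_w(V,P_j)^2 = w_x^2\big((x-x_i)^2-(x-x_j)^2\big)+w_y^2\big((y-y_i)^2-(y-y_j)^2\big).
\]

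The main (and only nontrivial) step is the expansion of $D$. Writing $V=(x,y)$, the quadratic terms $w_x^2x^2$ and $w_y^2y^2$ cancel between the two squared distances, which is exactly why the locus is a line rather than a conic. Collecting the surviving linear and constant terms gives
\[
D(V) = 2w_x^2(x_j-x_i)x + 2w_y^2(y_j-y_i)y - \big(w_x^2(x_j^2-x_i^2)+w_y^2(y_j^2-y_i^2)\big),
\]
so the equation $D(V)=0$, after dividing by $2$, is precisely the defining equation of $l(i,j)$. This proves $l(i,j)=\{V:d_w(V,P_i)=d_w(V,P_j)\}$.

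Finally I would assemble the global picture. Because $D$ is affine and vanishes exactly on $l(i,j)$, it has constant sign on each of the two open half-planes bounded by $l(i,j)$; by the sign observation above, the voters on one side strictly prefer $P_i$ and those on the other strictly prefer $P_j$. Taking all $\binom{n}{2}$ lines $l(i,j)$, each open region of the resulting arrangement lies on a fixed side of every line, so every pairwise weighted comparison is constant on that region and the region determines a single weighted ordering of $S$ — exactly as the ordinary perpendicular bisectors do in the unweighted case (equivalently, this also follows from Cor.~\ref{C:weight} via the dilatation $(x_k,y_k)\mapsto(w_xx_k,w_yy_k)$). I do not anticipate any genuine obstacle: the entire substance is the cancellation of the quadratic terms in the expansion of $D$, and the rest is the routine bisector-arrangement argument already developed for the unweighted problem.
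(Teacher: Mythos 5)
Your proposal is correct and takes essentially the same approach as the paper: the paper's proof simply sets $d_w((x,y),P_i)=d_w((x,y),P_j)$ and simplifies, omitting the algebraic details, which is exactly your expansion of $D(V)$ with the quadratic terms cancelling. Your version merely supplies those omitted details (the squaring justification and the routine region-to-ordering assembly), so there is nothing substantively different to compare.
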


\begin{proof}
This follows by setting the weighted distance between the point $(x,y)$ and $P_i$ equal to the weighted distance between $(x,y)$ and $P_j$ (as in the proof of \ref{P:weight1}), then simplifying. We omit the algebraic details.

\end{proof}

The lines separating the plane in Prop.~\ref{P:weight2} are not perpendicular bisectors. In particular, while the midpoint $(\frac{x_i+x_j}{2},\frac{y_i+y_j}{2})$ is on our line,  the slope of this line is $\displaystyle{-\frac{w_y^2(y_j-y_i)}{w_x^2((x_j-x_i)}},$ and so is not perpendicular to the line joining $(x_i,y_i)$ and $(x_j,y_j).$

\subsection{Two vantage points}\label{SS:2points}
Our final generalization concerns increasing the number of vantage points from one to two. As before, we are given $n$ points in $\R^d,$ but we now have vantage points $V_1$ and $V_2.$ As motivation, suppose that  two people with non-identical views wish to construct a single ordered list they can both agree on. One way to do this is to measure the average distance from $V_1$ and $V_2$ to each of the points in $S,$ then order the points from closest to farthest, using the average distance. (Although our ordering is determined by the \textit{average} distance from the two vantage points, we will use the \textit{sum} of those distances throughout the remainder of this section. These two approaches are obviously equivalent.)

Table~\ref{Ta:2pt} gives the results of a computer search for the minimum and maximum values for the number of orderings produced by moving two vantage points around the plane when $S\subset \R^2.$ (We point out that it took approximately 12 computers around 48 hours running in parallel to find 680 distinct orderings for six-point configurations.)

\begin{table}[htp]
\caption{Two vantage points for planar configurations: The results of a computer search for the minimum and maximum values for the number of regions produced using two moving vantage points for $S\subset \R^2.$ Note that the maximum is equal to $n!$ for $n\leq 5.$ The minimum values correspond to points equally spaced on a line --- see the values for $b_n$ in Table~\ref{Ta:2vpline}.}
\begin{center}
\begin{tabular}{c||ccccccccc}
$n$ &  2 & 3 & 4 & 5 & 6 & 7 & 8 & 9 & 10 \\ \hline \hline
Min &  2 &4&8&16&30&54&94&160&268 \\ \hline
Max &  2 & 6 & 24 & 120 & $\geq 680$ & ? & ? & ? & ? \\
\end{tabular}
\end{center}
\label{Ta:2pt}
\end{table}%

\subsubsection{The 1-dimensional case.}\label{SS:1dim}  In this subsection, we are given a collection of points $S \subset \R,$ along with  two vantage points $V_1,V_2 \in \R.$ We write  $S=\{P_1,P_2,\dots,P_n\}\subset \R$ and we assume the points are listed  in increasing order, so $P_1<P_2<\cdots <P_n,$ and our two vantage points are ordered so that $V_1<V_2.$

We begin with a complete description of the possible orderings that can be generated  in dimension 1. We will show that allowing a second vantage point does not increase the number of possible orderings, provided we ignore all orderings that produce ties. To complete this argument, we will need to understand how ties can be produced. Ties can happen in two distinct ways, described in the  next lemma.

\begin{lem}\label{L:ties}
Suppose the four points $P_i,P_j,V_1,V_2$ are collinear, placed on a number line, and $P_i$ and $P_j$ are tied in the ordering produced by average distance from two vantage points $V_1$ and $V_2.$  Assume $P_i<P_j$ and $V_1<V_2.$ Then either
\begin{enumerate}
\item $V_1<P_i<P_j<V_2,$ or
\item $P_i<V_1<V_2<P_j$ and  the midpoints of $\overline{V_1V_2}$ and $\overline{P_1P_2}$ coincide, i.e., $V_1+V_2=P_i+P_j.$
\end{enumerate}
\end{lem}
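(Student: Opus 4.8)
The plan is to reduce the tie condition to the analysis of a single piecewise-linear ``tub'' function. For fixed vantage points with $V_1 < V_2$, the total distance from a point $x \in \R$ is $f(x) = |x - V_1| + |x - V_2|$, and the ordering of $S$ is determined by comparing the values $f(P_k)$; two points are tied precisely when their $f$-values agree. So the lemma amounts to characterizing when $f(P_i) = f(P_j)$ for $P_i < P_j$, and then reading off the positions of the four points.

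First I would record the explicit shape of $f$. Writing it out on the three natural intervals gives
\begin{equation*}
f(x) = \begin{cases} V_1 + V_2 - 2x, & x \le V_1, \\ V_2 - V_1, & V_1 \le x \le V_2, \\ 2x - V_1 - V_2, & x \ge V_2. \end{cases}
\end{equation*}
Thus $f$ is strictly decreasing with slope $-2$ on $(-\infty, V_1]$, constant at its minimum value $V_2 - V_1$ on $[V_1, V_2]$, and strictly increasing with slope $2$ on $[V_2, \infty)$. The key structural fact I would extract is a description of the level sets: $f^{-1}(c)$ is empty when $c < V_2 - V_1$, equals the full interval $[V_1, V_2]$ when $c = V_2 - V_1$, and otherwise (when $c > V_2 - V_1$) consists of exactly two points, one in $(-\infty, V_1)$ and one in $(V_2, \infty)$, placed symmetrically about the midpoint $\tfrac{V_1 + V_2}{2}$.

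With this in hand the case analysis is immediate. A tie $f(P_i) = f(P_j)$ with $P_i < P_j$ cannot place both points on the same strictly monotone branch (monotonicity would force distinct $f$-values), nor can it place exactly one point in the flat interval $[V_1, V_2]$ and the other strictly outside it (the outside point has strictly larger $f$-value). Hence either both points lie in $[V_1, V_2]$, which together with distinctness of the four listed points yields conclusion (1), namely $V_1 < P_i < P_j < V_2$; or one point lies below $V_1$ and the other above $V_2$, forcing $P_i < V_1 < V_2 < P_j$. In the latter case, equating $f(P_i) = V_1 + V_2 - 2P_i$ with $f(P_j) = 2P_j - V_1 - V_2$ and solving gives $V_1 + V_2 = P_i + P_j$, i.e.\ the midpoints of $\overline{V_1 V_2}$ and $\overline{P_i P_j}$ coincide, which is conclusion (2).

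I do not anticipate a serious obstacle, since everything follows from the elementary geometry of the tub function; the only delicate point is bookkeeping at the boundaries $x = V_1$ and $x = V_2$. My intended workaround is to state the level-set description using closed intervals (so ties are correctly captured at the minimum value) and then invoke the distinctness of $P_i, P_j, V_1, V_2$ to upgrade the weak inequalities to the strict ones appearing in conclusion~(1). I would also make explicit at the outset that ordering by average distance is equivalent to ordering by $f$, so that ``tied in the ordering'' translates exactly into $f(P_i) = f(P_j)$.
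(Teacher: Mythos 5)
Your proof is correct and is essentially the paper's argument in different packaging: the three pieces of your tub function $f$ (decreasing with slope $-2$, flat at the minimum value $V_2-V_1$ on $[V_1,V_2]$, increasing with slope $2$) are exactly the distance computations the paper carries out case by case over the possible orderings of the four points, and your level-set trichotomy (same monotone branch, one point in the flat interval, opposite branches) reproduces its case analysis and yields the same midpoint equation $V_1+V_2=P_i+P_j$ in the opposite-branch case. The one point that must stay explicit --- and you flag it --- is that distinctness of $P_i,P_j,V_1,V_2$ is what upgrades the weak inequalities at the boundaries to the strict ones in conclusion (1), an assumption the paper uses implicitly as well.
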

\begin{proof}
First, note that if  $V_1<P_i<V_2,$ then $d(V_1,P_i)+d(V_2,P_i)=d(V_1,V_2). $ Thus, if condition 1 is satisfied, then $P_i$ and $P_j$ will be tied. If condition 2 is satisfied, then  $d(V_1,P_i)+d(V_2,P_i)=V_1+V_2-2P_i$ and  $d(V_1,P_j)+d(V_2,P_j)=2P_j-V_1-V_2.$ But $$V_1+V_2-2P_i=2P_j-V_1-V_2 \text{ if and only if } V_1+V_2=P_i+P_j,$$ i.e., when the points are ordered $P_i<V_1<V_2<P_j,$ a tie will be produced precisely when  the midpoints of $\overline{V_1V_2}$ and $\overline{P_1P_2}$ coincide.

It remains to show that configurations not satisfying 1 or 2 do not produce ties. By the above argument, if the points are ordered $P_i<V_1<V_2<P_j$ (this is the ordering in condition 2), ties are only produced when the midpoints coincide. There are two potential orders of the four points $V_1, V_2, P_i,$ and $P_j$ that we consider, up to symmetry.
\begin{enumerate}
\item [a.] Suppose $V_1<P_i<V_2<P_j.$ Then $d(V_1,P_i)+d(V_2,P_i)=d(V_1,V_2)<d(V_1,P_j)<d(V_1,P_j)+d(V_2,P_j),$ so $P_i$ will precede $P_j$ in the ordering generated. (The case $P_i<V_1<P_j<V_2$ is handled by a symmetric argument.)
\item [b.] Suppose $V_1<V_2<P_i<P_j.$ Then $P_i$ will again precede $P_j,$ so no ties will be produced. (The argument for the case $P_i<P_j<V_1<V_2$ is symmetric.)
\end{enumerate}

\end{proof}

By placing the two vantage points close together, it is clear that any ordering of the points that is achievable with one vantage point on the line is also achievable in the 2-vantage point case. If we avoid configurations with ties, the converse is true. This is the point of the next theorem, which is the main result in this section.

\begin{thm}\label{T:2vpline}
Suppose $V_1, V_2, P_1,P_2, \dots,P_n \in \R$ with $P_1<P_2< \cdots <P_n,$ with vantage points $V_1<V_2,$ and let $\sigma$ be the ordering of these points generated by average distance from the two vantage points. We assume there are no ties in $\sigma.$ Then the ordering $\sigma$ can also be achieved using a single vantage point.
\end{thm}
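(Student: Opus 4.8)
The plan is to produce an explicit single vantage point that reproduces $\sigma$, namely the midpoint $V=m:=\tfrac12(V_1+V_2)$. The engine of the argument is the shape of the sum-of-distances function $f(x)=|x-V_1|+|x-V_2|$: it is convex, piecewise linear, and ``flat-bottomed,'' equal to $V_1+V_2-2x$ for $x\le V_1$, to the constant $V_2-V_1$ for $V_1\le x\le V_2$, and to $2x-V_1-V_2$ for $x\ge V_2$. Comparing $f$ with the single-vantage function $g(x)=|x-m|$, I would record the two facts that drive everything: first, $f(x)=2g(x)$ for every $x\notin(V_1,V_2)$; and second, $f$ attains its global minimum value $V_2-V_1$ exactly on $[V_1,V_2]$, whereas on that same interval $g(x)\le\tfrac12(V_2-V_1)$. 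Since $\sigma$ is the order induced by $f$ on $S$ and the single-vantage order at $m$ is the order induced by $g$, the theorem reduces to showing these two orders coincide.

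First I would use the no-ties hypothesis to tame the flat region. Because $f$ is constant on $[V_1,V_2]$, any two points of $S$ lying in $[V_1,V_2]$ would be tied in $\sigma$ (this is exactly case~(1) of Lemma~\ref{L:ties}); hence at most one point $P_c$ of $S$ can lie in $[V_1,V_2]$, and if such a $P_c$ exists it is the unique minimizer of $f$, so $P_c=\sigma(1)$. With this in hand the verification is a pairwise comparison. For two points both lying strictly outside $(V_1,V_2)$, the identity $f=2g$ makes their $f$-order and $g$-order identical. For a comparison of the interior point $P_c$ against an exterior point $Q$, the no-ties hypothesis forces $f(Q)>V_2-V_1$ strictly, so $g(Q)=\tfrac12 f(Q)>\tfrac12(V_2-V_1)\ge g(P_c)$, and $P_c$ precedes $Q$ under $g$ as well. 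Thus the order induced by $g$ agrees with $\sigma$ point by point, and $V=m$ realizes $\sigma$ with a single vantage point.

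The step I expect to require the most care is the treatment of the flat bottom $[V_1,V_2]$, which is precisely the region where ties are created (cf.\ Lemma~\ref{L:ties}) and where $f$ and $g$ fail to be proportional. The whole argument hinges on two consequences of the no-ties hypothesis: that the single possible interior point is forced to be the globally closest point, and that the strict inequality $g(Q)>\tfrac12(V_2-V_1)$ holds for every exterior $Q$. Both follow from $f(Q)>V_2-V_1$ being strict. Everything away from this central interval is the routine observation that two V-shaped convex functions which are proportional off a common central segment induce the same order on a tie-free point set, so I would not belabor those calculations.
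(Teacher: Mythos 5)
Your proposal is correct and takes essentially the same route as the paper's own proof: both choose the midpoint $V=\tfrac12(V_1+V_2)$ as the single vantage point, invoke Lemma~\ref{L:ties} to conclude that the tie-free hypothesis confines at most one point of $S$ to $[V_1,V_2]$, use the identity $d(V,P)=\tfrac12\bigl(d(V_1,P)+d(V_2,P)\bigr)$ for points outside that interval, and verify that the unique interior point (if any) is ranked first in both orderings. The only difference is presentational — you organize the comparison via the piecewise-linear functions $f$ and $g$, while the paper argues directly with distances — so no further comment is needed.
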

\begin{proof}
Since there are no ties,  Lemma~\ref{L:ties} implies that the interval $[V_1,V_2]$ contains at most one point $P_i$ from our set. We set $V=\frac12(V_1+V_2),$ i.e., the midpoint of the segment $\overline{V_1V_2}.$ We will show that the ordering produced by the single vantage point $V$ is identical to the order produced by two vantage points $V_1$ and $V_2.$ We consider two cases.
\begin{itemize}
\item No $P_i$ satisfies $V_1<P_i<V_2.$ Then it is straightforward to show $d(V,P)=\frac12(d(V_1,P)+d(V_2,P))$ for all points $P$ in our set. This immediately gives us identical orders.
\item There is a unique index $k$ such that $V_1<P_k<V_2.$ By the argument given in the first case, we know $d(V,P_i)=\frac12(d(V_1,P_i)+d(V_2,P_i))$ for all points $P_i$ with $i \neq k .$ Further, the point $P_k$ will be ranked first in both the single vantage point  and the 2-vantage point cases. To see this, first note that if $i\neq k,$ then $d(V,P_k)<\frac12 d(V_1,V_2)< d(V,P_i).$ This tells us that $P_k$ will be listed first in the order produced using the single vantage point $V.$ 

But $P_k$ will also be listed first using our two vantage points since $d(V_1,P_k)+d(V_2,P_k)=d(V_1,V_2),$ but $d(V_1,P_i)+d(V_2,P_i)>d(V_1,V_2)$ for all $i\neq k.$ So, again, the two orders will be identical.
\end{itemize}

\end{proof}

The following corollary follows immediately from Theorems~\ref{T:gap1} and \ref{T:2vpline}.

\begin{cor}
Let $k$ be an integer with $2n-2\leq k \leq \frac12(n^2-n+2).$ Then there is a configuration $S\subset \R$ such that the number of distinct orderings  (with no ties)  produced by two vantage points on the line is $k.$  
\end{cor}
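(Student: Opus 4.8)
The plan is to exhibit, for each integer $k$ in the stated range, a \emph{single} point configuration $S\subset\R$ whose set of tie-free orderings under two vantage points is precisely the set of orderings achievable with one vantage point, and then to invoke Theorem~\ref{T:gap1} to pin down the size of the latter. Since the corollary is asserted to follow immediately from Theorems~\ref{T:gap1} and~\ref{T:2vpline}, the entire content of the argument is a set-equality comparison between two families of orderings attached to one fixed configuration.

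First I would fix $k$ with $2n-2\le k\le\frac12(n^2-n+2)$ and apply Theorem~\ref{T:gap1} to obtain a configuration $S_k\subset\R$ of $n$ points with $a_{S_k}(n,1)=k$; that is, $S_k$ produces exactly $k$ distinct orderings as a single vantage point moves along the line. I would then argue that the collection of tie-free orderings produced by two vantage points for this \emph{same} configuration $S_k$ is identical, as a set, to its collection of one-vantage-point orderings.

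The equality is established by double inclusion. For one inclusion, the remark preceding Theorem~\ref{T:2vpline} already records that any ordering achievable with a single vantage point is recovered by placing $V_1$ and $V_2$ arbitrarily close together, so every one-vantage-point ordering occurs (without ties) among the two-vantage-point orderings. For the reverse inclusion, Theorem~\ref{T:2vpline} states precisely that any two-vantage-point ordering with \emph{no ties} can be realized by the single vantage point $V=\tfrac12(V_1+V_2)$. Combining the two inclusions, the set of tie-free two-vantage-point orderings of $S_k$ coincides with the set of its one-vantage-point orderings, which has cardinality $k$ by construction. This yields a configuration with exactly $k$ tie-free orderings under two vantage points, as required.

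The only real subtlety — and the reason this is a corollary rather than a tautology — is that the two theorems must be applied to one common configuration, and that the ``no ties'' hypothesis is exactly the condition under which the reverse inclusion is valid. Without excluding ties there is no such coincidence, since by Lemma~\ref{L:ties} a point strictly between $V_1$ and $V_2$ (or a coincidence of midpoints) can manufacture orderings, so that the two-vantage-point count can genuinely exceed the one-vantage-point count. I expect no computational obstacle whatsoever: the argument is purely the set-theoretic comparison of two ordering families that Theorems~\ref{T:gap1} and~\ref{T:2vpline} have already been engineered to make, and the delicate part has been discharged in those earlier results.
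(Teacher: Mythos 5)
Your proposal is correct and takes essentially the same route as the paper: the paper states that the corollary ``follows immediately from Theorems~\ref{T:gap1} and \ref{T:2vpline},'' and your argument is just the fleshed-out version of that deduction, combining the gap-filling configuration from Theorem~\ref{T:gap1} with the two-way comparison (close-together vantage points for one inclusion, Theorem~\ref{T:2vpline} for the other) that the paper itself records in the remark preceding Theorem~\ref{T:2vpline}.
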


\subsubsection{Planar configurations}\label{SS:2ptplane}

When $S\subset \R^2,$  it becomes much more difficult to determine maximum and minimum values for the number of different orderings produced as the two vantage points move about the plane. If we draw perpendicular bisectors as in the single vantage point case, it is clear that an ordering of $S$ produced by the single vantage point $V$ can also be achieved in the 2-vantage point problem by placing the  two vantage points $V_1$ and $V_2$ in the same region that $V$ occupies. But, unlike the situation when all the points are collinear (including the vantage points) as in Section~\ref{SS:1dim},  we can achieve more orderings with two vantage points than we could with one when all the points are free to move about the plane.

In this section, we restrict to the case where the points of $S$ lie on a line in $\R^2.$ Even with this restriction, it seems difficult to find the maximum and minimum values for the number of orderings generated. See Table~\ref{Ta:2vpline} for the number of orderings produced with two vantage points when the points of $S$ are collinear. In this case, we further distinguish two cases: configurations with the points equally spaced on a line, and configurations where the spacing of the collinear points is unrestricted.

\begin{table}[htp]
\caption{Two vantage points in the plane for collinear configurations: $a_n$ is  the  number of orderings produced when the $n$  points are collinear; $b_n$ is  the number of orderings when the points  are collinear and equally spaced. In general, $a_n \leq 2^{n-1}$ and $b_n\leq 2F_{n+2}-2n,$ where $F_k$ is the $k^{th}$ Fibonacci number. See Propositions~\ref{P:2^n-1} and \ref{P:velo}.}
\begin{center}
\begin{tabular}{c||c|c|c|c|c|c|c|c|c|c} 
$n$ & 1 & 2 & 3 & 4 & 5 & 6 & 7 & 8 & 9 & 10 \\ \hline \hline
$a_n$ & 1 & 2 & 4 & 8 & 16 & 32 & 63 or 64 & ? & ? & ? \\ \hline
$b_n$ & 1 & 2 & 4 & 8 & 16 & 30 & 54 & 94 & 160 & 268 
\end{tabular}
\end{center}
\label{Ta:2vpline}
\end{table}

\begin{figure}[htb] 
   \centering
\includegraphics[width=2.5in]{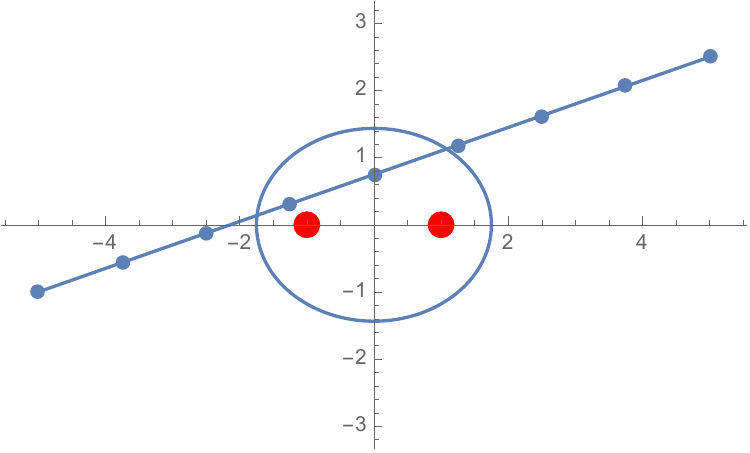} 
 \caption{The vantage points are $(-1,0)$ and $(1,0),$ and the points of $S$ are equally spaced on the line segment. The order is generated by expanding ellipses, with foci at the vantage points, shown in red.}
\label{F:2pts}
\end{figure}

When $S\subset  \R^2,$ we can visualize the  orderings produced when we use the average distance to the two vantage points by drawing a series of expanding ellipses, each with foci fixed at the two vantage points. To get an order for $S,$ simply record the order that the points of $S$ are hit as the ellipses expand.  See Fig.~\ref{F:2pts} for an example.

\begin{prop}\label{P:2^n-1}
Let $S$ be a collection of $n$ points on a line in $\R^2.$ Then the number of orderings produced using average distance to two vantage points in the plane is at most $2^{n-1}.$

\end{prop}

\begin{proof}
Fix two vantage points $V_1$ and $V_2$ in the plane, and suppose $P_i$ precedes $P_j$ in the ordering generated, where $1\leq i < j \leq n.$ (The other case is handled similarly.)  Suppose $k$ is between $i$ and $j,$ so $i<k<j.$ We will show that $P_k$ precedes $P_j$ in the ordering. This will ensure that the permutation of the points of $S$ generated will have the property that, for all $1 \leq k \leq n,$ the point $P_k$ can only appear  after either $P_{k-1}$ or $P_{k+1}$ appears (unless $P_k$ appears first). Then an elementary combinatorial argument shows that the number of  sequences satisfying this condition is $2^{n-1}.$ 

To see this, choose an ellipse with foci $V_1$ and $V_2$ where $P_j$ is on the boundary of the ellipse. Then $P_i$ must be in the interior of the ellipse (since $P_i$ precedes $P_j$ in the ordering). Thus, the line segment $\overline{P_i,P_j}$ is entirely contained in the convex hull  of the ellipse, since the segment and hull are both convex sets. Then $P_k$ will also be in the interior of the ellipse for all $k$ satisfying $i<k<j,$ so $P_k$ will precede $P_j$ in our ordering.

\end{proof}

We now treat the case where the points are equally spaced on the segment. We  further simplify our procedure by fixing the two vantage points and allowing the points of $S$ to move using planar isometries and dilations. This will be our approach throughout the remainder of this section. This reverses our usual procedure of fixing $S$ and moving $V_1$ and $V_2,$ but it is easy to show that these two approaches are equivalent.

Now fix the vantage points  at $(-1,0)$ and $(1,0),$ then choose two points $P_1=(x_1,y_1)$ and $P_n=(x_n,y_n)$ in the plane --- these will be the endpoints of our line segment. Then, if the points are equally spaced, we have  $$P_k=\left(\frac{(n-k-1)x_1+k x_n}{n-1},\frac{(n-k-1)y_1+k y_n}{n-1}\right).$$ Note that this approach depends on the values of five parameters: the $x$ and $y$ coordinates of the two endpoints, and the number of points $n.$ Different orderings will be produced as we vary the two endpoints of the line segment. This is consistent with our standard approach, where the five parameters are the coordinates of the two vantage points, in addition to $n.$

We let $c_n$ be the number of binary sequences of length $n$ that have the property that consecutive 0's and consecutive 1's cannot \textit{both} appear except at the beginning or the end of the sequence. For instance, the binary sequence 11011101000 is good, but 10001100 is bad. This integer sequence can be obtained from the integer sequence A000126 in \cite{s} by doubling every term in A000126, yielding the closed form formula for $c_n=2(F_{n+2}-n),$ where $F_k$ is the $k^{th}$ Fibonacci number. The next proposition shows that these binary sequences provide an upper bound for the number of orderings for equally spaced points on a line for the 2-vantage point problem.

\begin{prop}\label{P:velo} Let $b_n$ be the number of orderings possible with two vantage points in the plane, where the points are collinear, with points equally spaced. Then $b_n \leq c_{n-1}.$

\end{prop}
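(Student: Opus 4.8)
The plan is to collapse the proposition to a one‑dimensional convexity statement and then read off each ordering as a binary word of length $n-1$, using the expanding‑ellipse picture already set up. Fix the two vantage points and let the $n$ equally spaced points lie on a line $\ell$. The function $f(Q)=d(V_1,Q)+d(V_2,Q)$ that drives the ordering is a sum of two convex functions, hence convex along $\ell$; since the points are equally spaced, the values $g(k)=f(P_k)$ form a strictly convex sequence, i.e. the successive differences $\Delta_k=g(k+1)-g(k)$ are strictly increasing. This reproves the interval (peeling) structure of Proposition~\ref{P:2^n-1}: the set of points already overtaken by the expanding ellipses is always a contiguous block $\{P_a,\dots,P_b\}$ that grows by one step to the left or to the right. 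Recording $0$ for a left extension and $1$ for a right extension produces a word $w(S)$ of length $n-1$, distinct orderings give distinct words, and $b_n$ is the number of realizable words.

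Next I would recast the dynamics in terms of velocity (the source of the label ``velo''). Parametrizing the ellipses by their common distance‑sum $c$, for $c$ just above $\min_\ell f$ the level set meets $\ell$ in two points, a left front and a right front, that move outward from the minimizer as $c$ grows. The outward speed of each front is $1/|f'|$ at its crossing point, and because $f'$ increases monotonically from $-2$ to $2$ along $\ell$, both fronts are fast near the minimizer and decelerate monotonically as they travel outward; equivalently, the gaps each front leaves between the points it sweeps are increasing, matching the monotonicity of the $\Delta_k$. A run of two or more equal symbols in $w(S)$ occurs exactly when one front overtakes two consecutive points before the other front reaches its next point.

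The heart of the argument is to show that this monotone deceleration forces $w(S)$ to be good. I would interpret the two exempt cases directly: a run at the \emph{beginning} is the initial fast phase, where both fronts sit near the minimizer and can burst past several points; a run at the \emph{end} arises only once one front has swept off the end of the point set, after which the remaining points, all on the other side, are forced into a single monochromatic block. In between, while both fronts are active and past the fast phase, neither can produce a run: a run demands that front be substantially faster than the other, which is incompatible with the fact that both speeds are read off the single monotone profile $f'$. Formally, I would translate ``an interior block of $0$'s'' and ``an interior block of $1$'s'' into inequalities among consecutive $\Delta_k$ — the two outward gap‑profiles are cut from the one increasing sequence $\Delta_1<\dots<\Delta_{n-1}$ straddling the minimizer — and derive a contradiction from monotonicity. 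Thus $w(S)$ never contains interior blocks of both kinds, so it lies in the set counted by $c_{n-1}$, giving $b_n\le c_{n-1}$.

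The main obstacle is exactly this third step. Convexity alone is too weak: one checks that arbitrary convex sequences realize every one of the $2^{n-1}$ words, so the two fronts' gap‑profiles can be chosen independently; the restriction to good words must therefore use the specific structure of the distance‑sum function (a bounded, monotone directional derivative) rather than convexity per se. The delicate part is matching the combinatorial ``interior runs of both symbols'' condition to the analytic statement that both gap‑profiles descend from one monotone sequence, and in particular handling asymmetric configurations, where the left and right fronts decelerate at different rates; that is where the inequality bookkeeping will be most involved.
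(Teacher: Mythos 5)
Your setup coincides with the paper's: expanding confocal ellipses, a left and a right front sweeping the line, the up--down word of length $n-1$, and the reduction of the proposition to showing that every realizable word is ``good.'' But the heart of the proof is missing, and the route you sketch for it cannot be completed. You propose to encode interior runs as inequalities among the consecutive differences $\Delta_k$ of the convex sequence $g(k)=f(P_k)$ and to ``derive a contradiction from monotonicity.'' Monotonicity of the $\Delta_k$ \emph{is} convexity of $g$, which you yourself (correctly) observe is too weak. Concretely, the strictly convex sequence $g=(22,11,1,0,2,5,23)$ (differences $-11<-10<-1<2<3<18$) has hit order $P_4,P_3,P_5,P_6,P_2,P_1,P_7$, hence word $011001$, which contains an interior run of $1$'s \emph{and} an interior run of $0$'s; so no bookkeeping with the $\Delta_k$ alone can rule out bad words --- and neither can your fallback properties (boundedness and monotonicity of $f'$ along the line), since a suitably rescaled convex interpolant of this sequence has those too. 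Separately, your intermediate claim that ``neither front can produce a run'' in the interior is false: interior runs of one symbol do occur and are permitted by $c_{n-1}$; what must be proved is the asymmetric statement that, for each fixed configuration, only one of the two symbols can have interior runs.

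The paper supplies exactly the cross-front comparison that your within-front deceleration argument lacks. Writing the line as $y=mx+b$ (after reductions, $b,m>0$; the cases $b=0$ and $m<0$ are symmetric) and parametrizing the ellipses by the $x$-intercept $t$, the two abscissas $x_1<x_2$ where the ellipse meets the line satisfy
\[
\frac{dx_1}{dt}+\frac{dx_2}{dt}=\frac{4bmt}{\left(\left(m^2+1\right)t^2-1\right)^2}\ \ge\ 0,
\]
strictly when $bm>0$: one designated front always moves faster than the other. Equal spacing then kills interior runs of the slow symbol outright: a maximal interior run $10^{\ge 2}1$ would mean that between two consecutive fast-front hits --- during which the fast front advances exactly one spacing --- the slow front advances a positive partial gap plus at least one full spacing, i.e.\ strictly more, contradicting the pointwise speed comparison. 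Hence every realizable word has interior runs of only one predetermined symbol, which is precisely the condition counted by $c_{n-1}$. This sign computation (equivalently, the monotone drift of the chord's midpoint), a property of the confocal family rather than of convexity, is the key idea absent from your proposal.
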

\begin{proof}

%
%

The proof uses  calculus. First, given a permutation of length $n,$ we create a binary sequence of 0's and 1's of length $n-1$ by recording the up-down sequence. For example, given the permutation 546732891, we get 01100110 (where 0 records a decrease  and 1 records an increase). 

Now an ellipse with foci located at $(-1,0)$ and $(1,0)$ has equation $\displaystyle{\frac{x^2}{t^2}+\frac{y^2}{t^2-1}=1,}$ where the parameter $t$ corresponds to the positive $x$-intercept of the ellipse. We assume the line containing $S$ has slope $m$ and intercept $b,$ where $b,m>0.$  (We can reflect over the $x$-axis to get $b>0,$ if needed. If $m<0,$ then we modify the argument given below, swapping the 0's and 1's.)

We will show that the up-down binary sequence has the property that consecutive 0's can never occur except (possibly) at the beginning or the end of the sequence. First, we find the intersection of the ellipse and the line $y=mx+b.$ Here are the $x$-values of the two intersection points:

\begin{eqnarray*}
x_1&=&\frac{-\sqrt{4 b^2 m^2-4 \left(b^2-t^2+1\right) \left(m^2-\frac{1}{t^2}+1\right)}-2 bm}{2 \left(m^2-\frac{1}{t^2}+1\right)}\\
x_2&=&\frac{\sqrt{4 b^2 m^2-4 \left(b^2-t^2+1\right) \left(m^2-\frac{1}{t^2}+1\right)}-2 bm}{2 \left(m^2-\frac{1}{t^2}+1\right)}
\end{eqnarray*}
To determine whether we can get consecutive 0's in an associated permutation, we compute the derivatives $\frac{dx}{dt}$ at each of the intersection points. Note that $\frac{dx}{dt}>0$ when $x=x_2$ and $\frac{dx}{dt}<0$ at $x=x_1.$ (See Figure~\ref{F:velo}.)  Adding these derivatives gives $$\frac{dx}{dt}(x_1)+\frac{dx}{dt}(x_2)=\frac{4 b m t}{\left(\left(m^2+1\right) t^2-1\right)^2}.$$ But this is positive for $m,b>0.$ This implies that the vertical line $x=x_2$ is moving to the right faster than the vertical line $x=x_1$ is moving to the left. Thus, it is not possible for the sequence to have consecutive 0's in the interior of the associated permutation.

Finally, if $b=0,$ then $\frac{dx}{dt}(x_1)+\frac{dx}{dt}(x_2)=0,$ so the vertical lines are moving at the same speed. In this case, the above argument remains valid; in fact, in this case, consecutive 0's \textit{and} consecutive 1's can only occur at the beginning or end of the sequence.

Finally, if $b>0$ and $m<0,$  then the same argument will produce the same conclusion, where $0's$ and $1's$ are swapped.

\end{proof}

In Fig.~\ref{F:velo}, we show the vertical lines through the intersection points of the line $y=mx+b$ and the expanding ellipses with foci $(-1,0)$ and $(1,0).$ 

\begin{figure}[htb] 
   \centering
\includegraphics[width=3in]{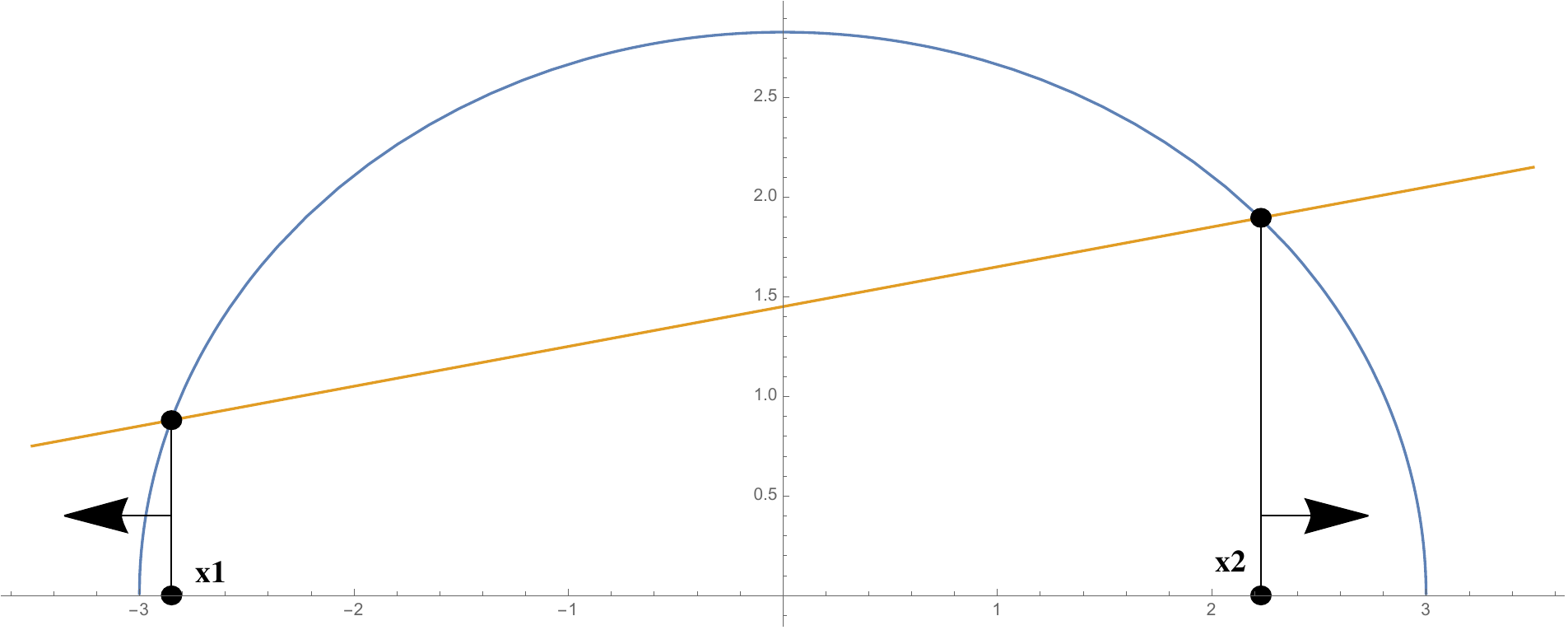} 
 \caption{As the ellipse expands, the vertical line $x=x_2$ moves  to the right faster than the line $x=x_1$ moves to the left.}
\label{F:velo}
\end{figure}

We conclude this section with a few comments.
\begin{itemize}
\item Assuming $b,m>0,$ an analysis of the second derivative of $g(t)=x_2(t)/x_1(t)$ indicates that $g(t)$ has a unique maximum. It follows that the runs of 1's trapped between two 0's  forms a unimodal sequence. This restricts the number of possible permutations, so it is certainly true that the bound $c_{n-1}=2F_{n+2}-2n$ is too big. It should be possible to get a better bound taking advantage of unimodality. But this effect does not appear for $n \leq 10$: the first ten values given for $b_n$ in Table~\ref{Ta:2vpline} coincide with the first ten values of $c_n=2F_{n+2}-2n.$

\item Another approach to the 2-vantage point problem in the plane uses  hyperbolas. In this formulation, we return to the original set-up of the 2-vantage point problem: $S$ is fixed and the vantage points move. Suppose $V_1$ and $V_2$ are the vantage points, then fix $V_1$ and let $V_2$ move. Choose two points $P_1,P_2\in S.$ When $V_2$ satisfies the equation $d(V_1,P_1)+d(V_2,P_1)=d(V_1,P_2)+d(V_2,P_2),$ we note that $P_1$ and $P_2$ will be tied in the order.

Now rewrite this equation as follows: $$d(V_2,P_1)-d(V_2,P_2)=d(V_1,P_2)-d(V_1,P_1).$$ But $d(V_1,P_2)-d(V_1,P_1)$ is a constant since $V_1,P_1,$ and $P_2$ are all fixed. So we are interested in the points $V_2$ where $d(V_2,P_1)-d(V_2,P_2)=c$ for some constant $c.$ The points that satisfy this equation form a hyperbola since the difference of the distances from $V_2$ to two fixed points is constant. But the regions determined by the hyperbolas that arise do not have the same properties as the hyperplane arrangements associated with perpendicular bisectors. In particular, it is possible for distinct regions to give the same order.

\end{itemize}

\section{Problems and conjectures}\label{S:prob}
We list several open problems that we believe deserve further study.

\begin{enumerate}
\item Gap filling: Given an integer $k$ satisfying $m(n,d)\leq k \leq M(n,d),$ we say that $k$ is \textit{achievable} if there is a configuration $S\subset \R^d$ of $n$ points so that the total number of orderings of the $n$ points is exactly $k,$ where the vantage point moves through $\R^d.$   When $d=1,$ we know that there are configurations that achieve $k$ orderings for any $k$ between the minimum and the maximum. This is false in dimensions $d>1,$ however.

\begin{prob}\label{Pr:gap}
Given $n, d>1,$ find all $k$ so that $k$ is achievable by an $n$-point configuration in $\R^d.$
\end{prob}

It is probably hopeless to answer Problem~\ref{Pr:gap} completely. Partial results along the lines of Cor.~\ref{C:gaps} should be possible; indeed, Theorem~\ref{T:gap1} holds in all dimensions. We also point out that the percentage of achievable values between the minimum and maximum from Table~\ref{Ta:gaps} shows that more than half of all possible values are achieved when $n \leq 8.$ It would be interesting to determine bounds for limit points for the sequence of achievable percentages in the plane:

\begin{prob}\label{Pr:gap2}
Let $r_n$ be the percentage of values between $m(n,2)$ and $M(n,2)$ that can be achieved by a configuration of $n$ points in the plane. Find upper and lower bounds for $r_n.$ In particular, does $r_n$ have a non-zero limit point?
\end{prob}

\item Minimum values: By Theorem~\ref{T:min}, we know the minimum value $m(n,d)=2n-2$ holds in $\R^d$ for all $d>1.$ But the configuration that achieves the minimum in $\R^d$ is 1-dimensional. Determining the minimum when we restrict to configurations whose affine spans are $d$-dimensional should be worth pursuing.

\begin{prob}\label{Pr:min}
Given $n,d>1,$ find the minimum number of orderings possible for $S\subset \R^d$ assuming the affine span of $S$ is $\R^d.$
\end{prob}

For instance, in the plane, if $S$ consists of the $n$ vertices of a regular $n$-gon, then $a_S(n,2)=2n,$ and it is easy to show this is best possible for configurations that span a plane. We would expect highly symmetric configurations to achieve values at or near the minimum in higher dimensions, too.
 \item Two vantage points: When we have two vantage points, we know very little when $d>1.$ 
 
 \begin{prob}\label{Pr:2pts}
 \begin{enumerate}
\item Let $m_2(n)$ and $M_2(n)$ be the minimum and maximum values for the number of orderings of an $n$-point set which are possible when two vantage points are allowed to move about the plane. Determine  $m_2(n)$ and $M_2(n).$
\item Determine asymptotic bounds for $m_2(n)$ and $M_2(n).$ Is it true that both $m_2(n)$ and $M_2(n)$ grow exponentially?
\end{enumerate}
 \end{prob}
 
 By Prop.~\ref{P:velo}, we know that $m_2(n)\leq \tau^{n+2},$ but this is an exponential upper bound on the minimum number of orderings, where $\tau$ is the golden mean. We have essentially no information about the maximum in this case.
 
\item More vantage points: When we allow two vantage points, more orderings are possible than with one (except for linear configurations). How many vantage points do we need to ensure that \textit{every} ordering of the points of $S$ can be realized by some placement of the vantage points?

\begin{prob}\label{Pr:factorial}
Let $S\subset \R^d$ with $|S|=n,$  and let $v(n,d)$ be the smallest number of vantage points so that the number of orderings of $S$ is $n!,$ where the $v(n,d)$ vantage points are free to move about $\R^d.$ Determine $v(n,d).$
\end{prob}

If $S \subset \R^d$ with $S=\{P_1,P_2,\dots,P_n\},$ then it should be possible to add $a_i$ vantage points at (or near) $P_i$ so that a specified ordering of $S$ can be achieved. One possible method for attacking this problem uses the fact that the Euclidean distance matrix (where the $(i,j)$ entry $d_{i,j}=d(P_i,P_j)$) is invertible \cite{m}. We can then solve a system of equations in the $a_i$ to achieve a specified order of $S.$ The $a_i$ may not be integers however, and they also may not be positive. But modifying this procedure may produce positive integer solutions for any desired order.

 \item Spherical arrangements: When a configuration $S\subset S^2$ of points on the sphere is contained in a hemisphere and also produces the maximum number of regions on a sphere, Theorem~\ref{T:double} gives a formula allowing us to determine the number of regions of the `doubled' configuration $S\cup (-S).$ (This is the configuration consisting of $S$ and the antipodes of each point of $S.$) The formula depends on the size of $S.$ Can this be generalized to other `doubled' configurations?
 
 \begin{prob}\label{Pr:double}
 Let $S \subset S^2$ be a collection of points contained in an open hemisphere of $S^2,$ and let $S\cup(-S)$ be the configuration obtained from $S$ by adding in all the antipodal points of $S.$ Is there a formula depending only on the size of $S$ and the number of regions determined by $S$ for the number of regions determined by $S\cup (-S)$?
 \end{prob}
 
  \end{enumerate}
  
  \section{Acknowledgements}  We thank Liz McMahon and Tom Zaslavsky for several helpful conversations. We also thank the anonymous referee for the proof idea given in item C following the proof of Theorem \ref{T:min}.

\end{document}